 \let\oldbibitem\bibitem
 \renewcommand{\bibitem}{\setstretch{1.2}\oldbibitem}
\newtheorem{thm}{Theorem}
\newtheorem{prop}{Proposition}
\newtheorem{cor}{Corollary}
\newtheorem{rem}{Remark}
\newtheorem{exa}{Example}
\def \Zl {{\mathbbm Z}}
\def \Rl {{\mathbbm R}}
\def \Cl {{\mathbbm C}}
\def \e {{\bf e}}
\def \u {{\bf u}}
\def \v {{\bf v}}
\def \f {{\bf f}}
\def \e {{\mathbf e}}
\newcommand{\ob}[1]{\left(#1\right)}
\newcommand{\cb}[1]{\left\lbrace #1\right\rbrace}
\newcommand{\tb}[1]{\left[#1\right]}
\newcommand{\mb}[1]{\left|#1\right|}
\title{\it Quantum walks on finite and bounded infinite graphs}
\author[1]{Chris Godsil \thanks{cgodsil@uwaterloo.ca}}
\author[2]{Steve Kirkland \thanks{stephen.kirkland@umanitoba.ca}}
\author[3]{Sarojini Mohapatra\thanks{mohapatrasarojini5@gmail.com}}
\author[2]{Hermie Monterde\thanks{monterdh@myumanitoba.ca}}
\author[3]{Hiranmoy Pal \thanks{palh@nitrkl.ac.in}}
\affil[1]{Department of Combinatorics \& Optimization, University of Waterloo, Waterloo, Ontario, Canada}
\affil[2]{Department of Mathematics, University of Manitoba, Winnipeg, MB, Canada R3T 2N2}
\affil[3]{National Institute of Technology Rourkela, India-769008  }
\date{\today}
\begin{document}
	
	\maketitle

	
\begin{abstract}
A weighted graph $G$ with countable vertex set is bounded if there is an upper bound on the maximum of the sum of absolute values of all edge weights incident to a vertex in $G$. In this paper, we prove a fundamental result on equitable partitions of bounded weighted graphs with twin subgraphs and use this fact to construct finite and bounded infinite graphs with pair and plus state transfer with the adjacency matrix as a Hamiltonian. We show that for each $k \ge 3$, (i) there are infinitely many connected unweighted graphs with maximum degree $k$ admitting pair state transfer at $\tau\in\{\frac{\pi}{\sqrt{2}},\frac{\pi}{2}\}$, and (ii) there are infinitely many signed graphs with exactly one negative edge weight and whose underlying unweighted graphs have maximum degree $k$ admitting plus state transfer at $\tau\in\{\frac{\pi}{\sqrt{2}},\frac{\pi}{2}\}$. Parallel results are proven for perfect state transfer between a plus state and a pair state, and for the existence of sedentary pair and plus states. We further prove that almost all connected unweighted finite planar graphs admit pair state transfer at $\tau\in\{\frac{\pi}{\sqrt{2}},\frac{\pi}{2}\}$, and almost all connected unweighted finite planar graphs can be assigned a single negative edge weight resulting in plus state transfer, or perfect state transfer between a plus state and a pair state, at $\tau\in\{\frac{\pi}{\sqrt{2}},\frac{\pi}{2}\}$. Analogous results are shown to hold for unweighted finite trees. Using blow-up graphs, Cayley graphs and graphs with tails, we construct new infinite families of (finite and infinite) unweighted graphs and signed graphs admitting pair or plus state transfer.\\

\noindent{\it Keywords:} continuous quantum walk, equitable partition, signed graph, infinite graph, perfect state transfer, pair states.\\
\noindent{\it MSC: 81P45, 05C50, 15A16, 15A60.}
\end{abstract}

	
  \section{Introduction}

Accuracy matters. This statement applies in many settings,  but in particular, it is true in the context of transferring quantum information in a network of interacting qubits. Such a network can be modeled by a graph, where the vertices represent the qubits, and the edges represent interactions between pairs of qubits. Sparse graphs are of particular interest here, as the realization of a qubit network has a cost associated with each edge in the graph. Consequently, trees, planar graphs, and graphs of small maximum degree are natural candidates to consider in the context of networks of qubits. 

The quantum walk on a graph $G$ is governed by the \textit{transition matrix} $\exp{(itA)}$, where $A$ is the adjacency matrix of $G$. If for unit vectors $\u,\v\in\Cl^n$ we have $\exp{(i\tau A)}\u = \gamma \v$ for some $\gamma\in\Cl$, then \textit{perfect state transfer} (PST) occurs from $\u$ to $\v$ at $\tau$ \cite{godsil2025perfect}. This corresponds to accurate communication of the pure quantum state associated with $\u$ to the pure quantum state associated with $\v$, and is a highly desirable property. Much of the literature in quantum walks focuses on vertex PST on finite graphs, i.e.\ when $\u$ and $\v$ are standard basis vectors and $G$ has finite vertex set (see \cite{god1} for a survey and \cite{pal9,cou7,kirkland2023quantum,kirk2,monterde2025laplacian,pal25} for recent work). However, it seems that vertex PST on finite graphs is rare. Godsil showed that for any integer $k>0$, there is a finite number of connected unweighted finite graphs of maximum degree $k$ with vertex PST \cite[Corollary 6.2]{god2}. Motivated by this result, there is an emerging literature on extensions of vertex PST, such as PST between pair states (unit vectors of the form $\frac{1}{\sqrt{2}}(\e_u-\e_v)$), also known as \textit{pair PST}, and PST between plus states (unit vectors of the form $\frac{1}{\sqrt{2}}(\e_u+\e_v)$), also known as \textit{plus PST} \cite{che1,kim,ojha25,pal10}. 
For infinite graphs, only quantum walks on graphs attached to a finite number of infinite paths have been investigated \cite{bernard2025quantum,childs2009universal,childs2013universal,farhi2007quantum}. The growing interest in pair and plus PST and the scant work on quantum walks on infinite graphs prompts us in this paper to investigate perfect state transfer on weighted graphs with countable vertex sets that are bounded (i.e., there is a bound on the maximum of the sum of absolute values of edge weights incident to a vertex) with focus on pair PST, plus PST, and PST between a pair state and a plus state. 

We now outline our main contributions in this paper. Using equitable partitions, we establish a connection between plus PST in a bounded graph and vertex PST in a quotient graph. We also relate the existence of pair PST in a bounded graph with (edge-perturbed) twin subgraphs and vertex PST in the graph induced by the vertices of the subgraphs in question. In contrast to the rarity of vertex PST in finite graphs, we show that for each $k\geq 3$, there are infinitely many connected unweighted graphs with maximum degree $k$ admitting pair PST at time $\tau\in\{\frac{\pi}{2},\frac{\pi}{\sqrt{2}}\}$. We also prove a stronger result for planar graphs (resp., trees), which states that almost all connected unweighted finite planar graphs (resp., trees) admit pair PST at time $\tau$. Consequently, for each $k\geq 2$, there are more connected unweighted finite graphs (resp., planar graphs, trees) with maximum degree $k$ admitting pair PST than those admitting vertex PST. This suggests that PST between entangled vertex states (represented by pair states) is a more common phenomenon than vertex PST. Moreover, we prove that for each $k\geq 3$, there are infinitely many signed graphs with exactly one negative edge weight and whose underlying unweighted graphs have maximum degree $k$ admitting plus PST and PST between a pair state and a plus state at time $\tau\in\{\frac{\pi}{\sqrt{2}},\frac{\pi}{2}\}$. For planar graphs (resp., trees),  we establish a stronger result which states that almost all unweighted finite planar graphs (resp., trees) can be assigned a single negative edge weight to produce plus PST or PST between a plus state and a pair state at time $\tau$. Finally, using blow-up graphs, Cayley graphs, and graphs with tails, we supply new infinite families of unweighted graphs admitting pair and plus PST. The constructions we provide apply to both finite and infinite graphs, leading to new examples of pair and plus PST in infinite graphs that do not necessarily have tails. This observation is in stark contrast to vertex PST, which only occurs in finite graphs \cite{godd}. 


\subsection{Graphs}

Let $G$ be an undirected weighted graph with vertex set $V(G)$ and edge set $E(G)$. We allow our graphs to be infinite with countable vertex set and non-zero real edge weights. Define a weight function $w: V(G)\times V(G)\to \mathbb{R}$ such that $w(a,b)=w(b,a)$ for all $\{a,b\}\in E(G)$, and $w(a,b)=0$ if and only if $\{a,b\}\notin E(G)$. If $w(a,b)\neq 0$, then $w(a,b)$ is called the edge weight of $\{a,b\}$. We say that $G$ is a \textit{positively weighted graph} (resp., negatively weighted graph) if all edge weights in $G$ are positive (resp., negative). If $w(a,b)=1$ for all $\{a,b\}\in E(G)$, then $G$ is an \textit{unweighted graph}. We say that $G$ is a \textit{weighted signed graph} if $G$ has an edge with positive weight, and another one with a negative weight. In particular, if $w(a,b)=\pm 1$ for all $\{a,b\}\in E(G)$, then $G$ is an \textit{unweighted signed graph}. Unless otherwise specified, a graph $G$ denotes an undirected graph whose edge weights are arbitrary nonzero real numbers. We denote the unweighted path, cycle, and complete graphs on $n$ vertices by $P_n$, $C_n$ and $K_n$, respectively. We let $N_G(a)$ be the set of neighbours of vertex $a$ in $G$ and we refer to $\{w(a,b):b\in N_G(a)\}$ as the \textit{sequence of edge weights} of $a$. The \textit{degree} of vertex $a$ of $G$ is the sum $\sum_{b \in N_G(a)} w(a,b)$, while its \textit{absolute degree} is the sum $\sum_{b \in N_G(a)} |w(a,b)|$. If the absolute degree of vertex $a$ is finite, then so is its degree. 

A graph $G$ is \textit{locally finite} if for each vertex $a$ of $G$, the sequence of edge weights of $a$ is square summable, i.e., $\sum_{b \in N_G(a)} |w(a,b)|^2<\infty$. The \textit{adjacency matrix} $A(G)$ of a locally finite graph $G$ is a real symmetric matrix indexed by the vertices of $G$ such that $A(G)_{a,b}=w(a,b).$ In this case, we may consider $A(G):\mathbb{C}^{|V(G)|}\rightarrow \mathbb{C}^{|V(G)|}$ as a linear operator where the $u$--th entry of $A(G)\f$ for each $\f\in \mathbb{C}^{|V(G)|}$ is given by
\begin{center}
$A(G)\f(u)=\displaystyle\sum_{v\in N_G(u)}(\f^T\e_v)w(u,v)$.
\end{center}
As $G$ is locally finite, $|A(G)\f(u)|$ is bounded above for all $u\in V(G)$ and $\f\in \mathbb{C}^{|V(G)|}$ by the Cauchy-Schwarz inequality.
A locally finite graph $G$ is \textit{bounded} if the maximum absolute degree  in $G$ is bounded. Note that any finite weighted graph is bounded. Moreover, the infinite star with sequence of edge weights given by $\{\frac{(-1)^n}{n}:n\in\Zl^+\}$ is locally finite but not bounded, since the central vertex has unbounded absolute degree (but finite degree).

Recall that $\|A(G)\|=\sup_{\|\f\|=1}\|A(G)\f\|$. If $A(G)$ is a bounded linear operator, 
i.e., $\|A(G)\|<\infty$, then $G$ is locally finite since for each vertex $a$ of $G$,
\begin{equation}
\label{locfin}
\sum_{b \in N_G(a)} |w(a,b)|^2=\|A(G)\e_a\|^2\leq \|A(G)\|^2<\infty.
\end{equation}

The following result is relevant throughout this work.

\begin{thm}
\label{infg}
Let $G$ be a locally finite graph. If $G$ is bounded and $\mathcal{M}$ is the maximum absolute degree in $G$, then $A(G)$ is a bounded linear operator on the Hilbert space $\ell^2(\mathbb{Z}^+)$ of square-summable functions on $V(G)$ and $\|A(G)\|\leq\mathcal{M}$. The converse holds whenever each edge weight in $G$ has absolute value at least one. 
\end{thm}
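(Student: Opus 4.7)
The plan is to establish the forward direction via a Schur-test argument on $\ell^2(V(G))$ and the converse via a direct comparison of the absolute degree at each vertex with $\|A(G)\e_a\|^2$.

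For the forward implication, I would first verify that $A(G)\f$ is defined pointwise for every $\f\in\ell^2(V(G))$: since $G$ is locally finite, Cauchy--Schwarz gives
\[
\sum_{v\in N_G(u)}|w(u,v)||\f(v)| \leq \|\f\|\Bigl(\sum_{v\in N_G(u)}|w(u,v)|^2\Bigr)^{1/2} < \infty.
\]
To bound $\|A(G)\f\|^2$, I would then split $|w(u,v)| = |w(u,v)|^{1/2}\cdot |w(u,v)|^{1/2}$ and apply Cauchy--Schwarz a second time:
\[
|A(G)\f(u)|^2 \leq \Bigl(\sum_{v\in N_G(u)}|w(u,v)|\Bigr)\Bigl(\sum_{v\in N_G(u)}|w(u,v)||\f(v)|^2\Bigr) \leq \mathcal{M}\sum_{v\in N_G(u)}|w(u,v)||\f(v)|^2.
\]
Summing over $u$ and interchanging the order of summation (justified by Tonelli, since all summands are non-negative), the inner sum $\sum_u|w(u,v)|$ is again at most $\mathcal{M}$, yielding $\|A(G)\f\|^2 \leq \mathcal{M}^2\|\f\|^2$. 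This simultaneously shows that $A(G)\f\in\ell^2(V(G))$ and that $\|A(G)\|\leq\mathcal{M}$.

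For the converse, assume $\|A(G)\|<\infty$ and every edge weight satisfies $|w(a,b)|\geq 1$, whence $|w(a,b)|\leq|w(a,b)|^2$. Then for each vertex $a$, the identity already noted in (\ref{locfin}) gives
\[
\sum_{b\in N_G(a)}|w(a,b)| \leq \sum_{b\in N_G(a)}|w(a,b)|^2 = \|A(G)\e_a\|^2 \leq \|A(G)\|^2,
\]
and taking the supremum over $a$ shows $\mathcal{M}\leq\|A(G)\|^2<\infty$, so $G$ is bounded. The main subtlety lies in the forward direction: one must establish not only that the sum defining $A(G)\f(u)$ converges absolutely at each vertex $u$, but also that the resulting sequence $A(G)\f$ is genuinely square-summable; both are delivered cleanly by the two successive Cauchy--Schwarz estimates together with Tonelli's theorem.
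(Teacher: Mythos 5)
Your proof is correct and is essentially the paper's argument: the converse is exactly the computation in (\ref{locfin}) combined with $|w(a,b)|\leq|w(a,b)|^2$, and the forward direction is the standard Schur-test bound, which the paper simply outsources to a citation of Taylor's Theorem 6.12-A rather than writing out. Your two-fold Cauchy--Schwarz plus Tonelli argument is precisely the proof of that cited result, so nothing is missing.
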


\begin{proof}
The first and second statements follow resp.\ from \cite[Theorem 6.12-A]{taylor1958introduction} and (\ref{locfin}).
\end{proof}

If $G$ is an unweighted graph, then Theorem \ref{infg} recovers a result of Mohar \cite[Theorem 3.2]{mohar1982spectrum} which states that $G$ is bounded if and only if $A(G)$ is a bounded linear operator. 


\subsection{Quantum walks}

A pure state $\u$ in $G$ is a unit vector in $\mathbb{C}^{|V(G)|}.$ Pure states in $G$ of the form $\e_a$, $\frac{1}{\sqrt{2}}\ob{\e_a-\e_b}$ and $\frac{1}{\sqrt{2}}\ob{\e_a+\e_b}$ are called vertex states, pair states, and plus states, respectively. Often, we drop the normalization factor for a pair or a plus state for the sake of brevity.

A \textit{quantum walk} on a bounded graph $G$ is determined by a one-parameter family of unitary matrices given by 
\begin{equation*}
U_G(t)=\exp{(itA(G))}=\displaystyle\sum_{k=0}^\infty\frac{(it)^kA(G)^k}{k!},\quad t\in\Rl
\end{equation*}
where $i^2=-1.$ Since $G$ is bounded, $A(G)$ is a bounded linear operator on $\ell^2(\mathbb{Z}^+)$ by Theorem \ref{infg}, and so the above sum converges. The matrix $U_G(t)$ is the \textit{transition matrix} of $G$. Note that we may ignore the case when $G$ is negatively weighted, since $G$ determines the same quantum walk as the positively weighted graph $H$ where $A(H)=-A(G)$.


Let $\u$ and $\v$ be pure states. 
A graph $G$ is said to have \textit{perfect state transfer} (PST) from $\u$ to $\v$ at $\tau \in \mathbb{R}$ if
\begin{center}
$U_G(\tau)\u=\gamma\v\quad \text{for some}~\gamma\in\mathbb{C}.$
\end{center}
If $\u$ and $\v$ are linearly dependent, then $\u$ is \textit{periodic} in $G$ at $\tau$, and we may write the above equation as $U_G(\tau)\u=\gamma\u$ for some $\gamma\in\mathbb{C}.$
We say that $G$ admits \textit{pretty good state transfer} (PGST) from $\u$ to $\v$ relative to the sequence $\{\tau_k\} \subseteq \mathbb{R}$ if
\begin{center}
$\displaystyle\lim_{k\rightarrow\infty}U_G(\tau_k)\u=\gamma\v\quad\text{for some}~\gamma\in\mathbb{C}.$
\end{center}
If $\u$ and $\v$ are real, then there is PST (resp., PGST) from $\u$ to $\v$ if and only if there is PST (resp., PGST) from $\v$ to $\u$ at the same time (resp., sequence of times). In this case, we simply say that there is PST (resp., PGST) between $\u$ and $\v$. In particular, if $\u$ and $\v$ are pair states (resp., plus states), then we also say pair PST or pair state transfer (resp., plus PST or plus state transfer) in lieu of PST between the pair states (resp., plus states) $\u$ and $\v$. Lastly, a pure state $\u$ is \textit{$C$-sedentary} in $G$ if
\begin{center}
$\displaystyle\inf_{t>0}~\mb{\u^*U_G(t)\u}\geq C$
\end{center}
for some constant $0 < C \leq 1$. If $C$ is not important, then we say that $\u$ is sedentary.

In general, one may define a quantum walk on $G$ using any real symmetric matrix $M$ such that $M_{u,v}=0$ if and only if there is no edge between $u$ and $v$. Such a matrix is called the \textit{Hamiltonian} of the quantum walk, and is an infinite matrix whenever $G$ is infinite. Hence, unless otherwise stated, we assume that the Hamiltonian taken is $A(G)$ whenever we say PST (resp., PGST) from $\u$ to $\v$ in $G$, periodicity of $\u$ in $G$ and sedentariness of $\u$ in $G$.  However, if PST (resp., PGST) occurs from $\u$ to $\v$ in $G$ relative to a Hamiltonian $M$ that is different from $A(G)$, then we simply say PST (resp., PGST) from $\u$ to $\v$ relative to $M$, periodicity of $\u$ relative to $M$, and sedentariness of $\u$ relative to $M$, respectively.


\subsection{Equitable partitions}

A \textit{partition} $\Pi$ of a graph $G$ is a partition $\bigcup_j V_j$ of $V(G)$, where each $V_j$ is a finite set. The $V_j$'s are called the \textit{cells} of $\Pi$. The \textit{normalized characteristic matrix} $\mathcal{C}$ associated with $\Pi$ is a matrix whose rows are indexed by the vertices of $G$ and $j$--th column is the normalized characteristic vector of the cell $V_j.$ If $I$ is the identity matrix, $J_k$ is the $k\times k$ all-ones matrix and $\displaystyle\bigoplus_{k} M_k$ is the direct sum of matrices $M_k$, then
\begin{center}
$\mathcal{C}^T\mathcal{C}=I\quad \text{and}\quad \displaystyle\mathcal{C}\mathcal{C}^T=\bigoplus_{k}\frac{1}{|V_k|}J_{|V_k|}.$
\end{center}
If $G$ is finite, then $\mathcal{C}$ is a $|V(G)|\times d$ matrix, where $d$ is the number of cells in $\Pi$.

A \textit{partition} $\Pi$ of a bounded graph $G$ is \textit{equitable} if for each $ a\in V_j$, the sum $\sum_{b \in V_k} w(a,b)$ is a constant 
$c_{jk}\in \Rl$ for all $j$. In the unweighted case, this means that the number of neighbours in $V_k$ of a vertex in $ V_j$ depends only on the choice of $j$ and $k$.
If $\Pi$ is equitable, then
\begin{center}
$c_{j,k}|V_j|=c_{k,j}|V_k|$,
\end{center}
and so $c_{j,k}$ and $c_{k,j}$ are either both positive, both negative or both zero. An equitable partition $\Pi$ of $G$ gives rise to a \textit{(symmetrized) quotient graph} $G/\Pi$, which is the undirected graph with adjacency matrix $A(G/\Pi)$ defined entry-wise as follows:
\[A(G/\Pi)_{j,k}= \left\{ \begin{array}{rcl}
 \sqrt{c_{jk}c_{kj}}, & \mbox{if} & c_{jk}\geq 0, \\ 
 -\sqrt{c_{jk}c_{kj}}, & \mbox{if} & c_{jk}<0.
 \end{array}\right.\] 
It is known that $A(G)\mathcal{C} = \mathcal{C}A(G/\Pi)$ and the matrices $A(G)$ and $\mathcal{C}\mathcal{C}^T=\bigoplus_{k} \frac{1}{|V_k|}J_{|V_k|}$ commute \cite[Lemma 4.2]{god2}. Since $G$ is bounded, Theorem \ref{infg} implies that the power series expansion of $\exp{(itA(G))}$ converges. Thus,
\begin{equation}
\label{G/Pi}
U_G(t)\mathcal{C}=\mathcal{C}U_{G/\Pi}(t)\quad \text{for all $t\in\Rl$}.
\end{equation}
For more on equitable partitions, we refer the interested reader to \cite{god0}.

\section{Twin subgraphs}\label{2s_3}

Let $X_1$ and $X_2$ be graphs with isomorphism $f: X_1\to X_2$ satisfying $w\ob{f(a),f(b)}=w(a,b)$ for all pairs of vertices $a$ and $b$ in $X_1.$ Consider a connected graph $G$ where the disjoint union $X_1\cup X_2$ of $X_1$ and $X_2$ appears as an induced subgraph of $G$ where $w\ob{f(a),y}=w(a,y),$ for all $a\in V\ob{X_1}$ and $y\in V(G)\setminus V\ob{X_1\cup X_2}.$ We say that $X_1$ and $X_2$ are \textit{twin subgraphs} of $G$ with $\hat{f}$ as the automorphism of $G$ that switches each vertex of $X_1$ to its $f$-isomorphic copy in $X_2$ and fixes all other vertices of $G$.

\begin{figure}
\begin{multicols}{2}
\begin{center}
\begin{tikzpicture}[scale=.5,auto=left]
                       \tikzstyle{every node}=[circle, thick, fill=white, scale=0.6]
                       
		        \node[draw] (1) at (1.5,0) {$3$};		        
		        \node[draw,minimum size=0.7cm, inner sep=0 pt] (2) at (-2.8, 1) {$1$};
		        \node[draw,minimum size=0.7cm, inner sep=0 pt] (3) at (-1.3, 2) {$2$};
		        \node[draw,minimum size=0.7cm, inner sep=0 pt] (4) at (-2.8, -1) {$5$};
		        \node[draw,minimum size=0.7cm, inner sep=0 pt] (5) at (-1.3, -2) {$4$};		       
	
				\node at (-4.5, 2) {$X_1$};
				\node at (-4.5,-2) {$X_2$};
				\node at (6.3,0.5) {$H$};
				
				\draw[dotted] (-2,1.5) ellipse (2 cm and 1.3 cm);
				\draw[dotted] (-2,-1.5) ellipse (2 cm and 1.3 cm);
				\draw[dotted] (3.2,0.4) circle (2.5 cm);
								
				\draw [thick, black!70] (1)--(3)--(2);
				\draw [thick, black!70] (5)--(4);
                \draw [thick, black!70] (1)--(5);

				\draw[thick, black!70] (1)..controls (3,4) and (8,0)..(1);

				\end{tikzpicture}
                
\begin{tikzpicture}[scale=.5,auto=left]
                       \tikzstyle{every node}=[circle, thick, fill=white, scale=0.6]
                       
		        \node[draw] (1) at (1.5,0) {$3$};		        
		        \node[draw,minimum size=0.7cm, inner sep=0 pt] (2) at (-2.8, 1) {$1$};
		        \node[draw,minimum size=0.7cm, inner sep=0 pt] (3) at (-1.3, 2) {$2$};
		        \node[draw,minimum size=0.7cm, inner sep=0 pt] (4) at (-2.8, -1) {$5$};
		        \node[draw,minimum size=0.7cm, inner sep=0 pt] (5) at (-1.3, -2) {$4$};		       
	
				\node at (-4.5, 2) {$X_1$};
				\node at (-4.5,-2) {$X_2$};
				\node at (6.3,0.5) {$H$};
				
				\draw[dotted] (-2,1.5) ellipse (2 cm and 1.3 cm);
				\draw[dotted] (-2,-1.5) ellipse (2 cm and 1.3 cm);
				\draw[dotted] (3.2,0.4) circle (2.5 cm);
								
				\draw [thick, black!70] (1)--(3)--(2);
				\draw [thick, black!70] (5)--(4)--(3);
                \draw [thick, black!70] (1)--(5)--(2);

				\draw[thick, black!70] (1)..controls (3,4) and (8,0)..(1);

				\end{tikzpicture}
				
\end{center}
\end{multicols}
\caption{\label{fi} A graph with $P_2$ as twin subgraphs (left) and an edge-perturbed version (right)}
\end{figure}
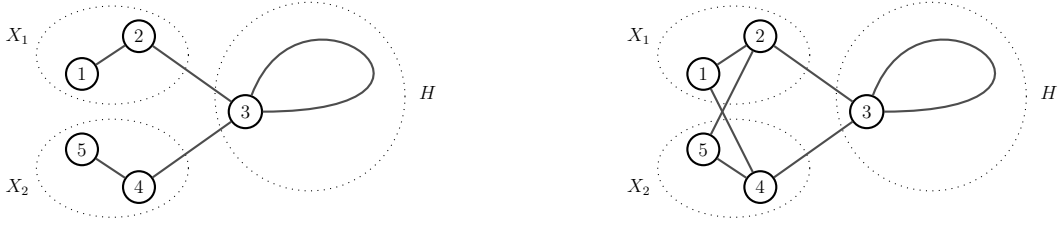

Let $X_1$ and $X_2$ be twin subgraphs of a graph $G$. Create $G'$ by adding edges in $G$ between $X_1$ and $X_2$ in a way that preserves the automorphism $\hat{f}$ of $G$. That is, $A(G')_{a,\hat{f}(b)}=A(G')_{\hat{f}(a),b}$. In this case, we call $X_1$ and $X_2$ \textit{edge-perturbed twin subgraphs} in $G'$, and $\hat{f}$ is an automorphism of both $G'$ and $G$ that switches each vertex of $X_1$ to its $f$-isomorphic copy in $X_2$ and fixes all other vertices of $G'$.

The following generalizes a fundamental fact on equitable partitions \cite[Lemma 4.2]{god2}. 

\begin{thm}
\label{godgen}
Let $G$ be a bounded graph with edge-perturbed twin subgraphs $X_1$ and $X_2$ with $A=A(G)$, and $\Pi$ be a partition of $G$ where $\{a,\hat{f}(a)\}$ is a cell for each $a\in V (X_1)$. Let $Q=[\mathcal{B}~~\mathcal{C}]$ be the matrix where the columns of $\mathcal{B}$ consist of all vectors from the set $\{\frac{1}{\sqrt2}(\e_a-\e_{\hat{f}(a)}):a\in V(X_1)\}$ and $\mathcal{C}$ is the normalized characteristic matrix of $\Pi.$ The following are equivalent:
\begin{enumerate}
\item $\Pi$ is equitable.
\item The column space of $Q$ is $A$-invariant.
\item There is a matrix $B$ such that $AQ=QB$.
\item The matrices $A$ and $QQ^T=\displaystyle I_{2|V(X_1)|}\oplus \left(\bigoplus_{k>|V(X_1)|} \frac{1}{|V_k|}J_{|V_k|}\right)$ commute.
\end{enumerate}
Moreover, if such a matrix $B$ exists, then
\begin{equation}
\label{B}
B:=\begin{bmatrix}
     A(X_1)-A' & \mathbf{0}\\
     \mathbf{0} & A(G/\Pi)
     \end{bmatrix},
\end{equation}
and $A'$ is a symmetric matrix whose rows and columns are indexed by the vertices of $X_1$ and $X_2,$ respectively, and $(A')_{u,v}\neq 0$ if and only if $u$ and $v$ are adjacent in $G$.
\end{thm}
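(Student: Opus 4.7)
The plan is to establish the cycle of implications $(1)\Rightarrow(3)\Rightarrow(2)\Rightarrow(4)\Rightarrow(1)$, deriving the block form of $B$ along the way. First I would record the preliminary fact that the columns of $Q$ are orthonormal: the difference vector $\frac{1}{\sqrt{2}}(\e_a-\e_{\hat{f}(a)})$ is orthogonal to the normalized characteristic vector $\frac{1}{\sqrt{2}}(\e_a+\e_{\hat{f}(a)})$ of its pair-cell (and trivially to the characteristic vectors of other cells). From this the stated formula for $QQ^T$ follows blockwise, since on each pair-cell the sum and difference vectors span the full two-dimensional coordinate subspace, giving $I_2$, while the remaining cells contribute the usual $\frac{1}{|V_k|}J_{|V_k|}$.

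For $(1)\Rightarrow(3)$, the standard identity $A\mathcal{C}=\mathcal{C}A(G/\Pi)$ handles the $\mathcal{C}$-block, so the work is to compute $A\mathcal{B}$. Writing out
\[
A\cdot\tfrac{1}{\sqrt{2}}(\e_a-\e_{\hat{f}(a)}) \;=\; \tfrac{1}{\sqrt{2}}\sum_{v\in N_G(a)} w(a,v)\,(\e_v-\e_{\hat{f}(v)}),
\]
which uses $w(\hat{f}(a),\hat{f}(v))=w(a,v)$ from the automorphism together with the involutivity of $\hat{f}$, I would observe that terms with $v$ fixed by $\hat{f}$ vanish, terms with $v\in V(X_1)$ produce the $A(X_1)$ action on the $\mathcal{B}$-columns, and terms with $v\in V(X_2)$ contribute with a sign reversal (since $\e_v-\e_{\hat{f}(v)} = -(\e_{\hat{f}(v)}-\e_v)$), which after re-indexing by $f^{-1}$ repackages as $-A'$ acting on $\mathcal{B}$. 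The symmetry of $A'$ reduces to $w(u,\hat{f}(v))=w(v,\hat{f}(u))$, itself an instance of $\hat{f}$-invariance of $w$. This yields $AQ=QB$ with $B$ exactly as stated. The implication $(3)\Rightarrow(2)$ is immediate, and $(2)\Rightarrow(4)$ follows from self-adjointness: $Q^TQ=I$ makes $QQ^T$ the orthogonal projection onto the closed column space, and $A$-invariance of this subspace together with $A^*=A$ gives invariance of the orthogonal complement, hence commutation of $A$ with $QQ^T$.

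The step I expect to require the most care is $(4)\Rightarrow(1)$. My plan is to compare $QQ^T$ with the ``standard'' equitable-partition projection $M:=\bigoplus_k \frac{1}{|V_k|}J_{|V_k|}$ via the key identity
\[
QQ^T \;=\; M \;+\; \tfrac{1}{2}\bigl(I - P_{\hat{f}}\bigr),
\]
where $P_{\hat{f}}$ is the permutation matrix of the involution $\hat{f}$. One checks this blockwise: on each pair-cell $P_{\hat{f}}$ acts as the $2\times 2$ swap, so $\frac{1}{2}J_2+\frac{1}{2}(I_2-\mathrm{swap})=I_2$, while outside $V(X_1)\cup V(X_2)$ the operator $P_{\hat{f}}$ is the identity and the correction vanishes. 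Because $\hat{f}$ is an automorphism of $G$, $A$ commutes with $P_{\hat{f}}$, hence with $\tfrac{1}{2}(I-P_{\hat{f}})$; combined with the hypothesis that $A$ commutes with $QQ^T$, this yields $[A,M]=0$, which by \cite[Lemma 4.2]{god2} is exactly the statement that $\Pi$ is equitable, closing the cycle.
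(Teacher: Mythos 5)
Your proposal is correct, and while the forward computations coincide with the paper's (the identities $A\mathcal{B}=\mathcal{B}(A(X_1)-A')$, $A\mathcal{C}=\mathcal{C}A(G/\Pi)$, and the block form of $QQ^T$ all appear there too), your route back to equitability is genuinely different. The paper proves $3\Rightarrow 1$ directly: assuming $AQ=QB$, it evaluates $\e_a^T(AQ)\check{\e}_k$ entrywise for $a$ outside $X_1\cup X_2$ and reads off that the cell sums $\sum_{v\in V_k}A_{a,v}$ are independent of the choice of $a$ in its cell; it then closes the loop with $3\Leftrightarrow 4$ via $B:=Q^TAQ$ and $Q^TQ=I$. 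You instead prove $4\Rightarrow 1$ by the decomposition $QQ^T=\mathcal{C}\mathcal{C}^T+\tfrac{1}{2}(I-P_{\hat{f}})$ and the observation that $A$ commutes with $P_{\hat{f}}$ because $\hat{f}$ is an automorphism of $G$, thereby reducing to the classical criterion that $[A,\mathcal{C}\mathcal{C}^T]=0$ iff $\Pi$ is equitable. Your version makes the role of the automorphism transparent and cleanly separates the ``twin'' part of $QQ^T$ from the ``partition'' part; its only cost is that the cited commutation criterion must be known to hold for bounded infinite graphs with finite cells, which requires essentially the same entrywise computation the paper performs explicitly in its $3\Rightarrow 1$ step (it does go through, since every cell sum is finite). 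Two small points to tidy up: for the ``Moreover'' clause you should note that any $B$ with $AQ=QB$ satisfies $B=Q^TQB=Q^TAQ$, so the block matrix you construct in $(1)\Rightarrow(3)$ is the unique candidate; and in $(2)\Rightarrow(4)$ it is worth remarking that $Q$ is an isometry, so its range is closed and $QQ^T$ really is the orthogonal projection onto it in the infinite-dimensional setting.
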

\begin{proof}
We first prove $1\Leftrightarrow 3$. Assume $\Pi$ is equitable so that $A\mathcal{C}=\mathcal{C}A(G/\Pi)$. As $X_1$ and $X_2$ are edge-perturbed twin subgraphs in $G$ and $\{a,\hat{f}(a)\}$ is a cell for all $a\in V (X_1)$, we get $A\mathcal{B}=\mathcal{B}(A(X_1)-A')$. Combining the two preceding equations gives us
\begin{equation*}
AQ=[A\mathcal{B}~~A\mathcal{C}]=[\mathcal{B}(A(X_1)-A')~~\mathcal{C}A(G/\Pi)]=[\mathcal{B}~~\mathcal{C}]\begin{bmatrix}
     A(X_1)-A' & \mathbf{0}\\
     \mathbf{0} & A(G/\Pi)
     \end{bmatrix}=QB.
\end{equation*}
Conversely, suppose $AQ=QB$. 
Let $\e_a$ be the characteristic vector of vertex $a$ in $G$, and let $\check{\e}_a$ be the characteristic vector of vertex $a$ such that $Q\check{\e}_a=\frac{1}{\sqrt{2}}(\e_a-\e_{\hat{f}(a)})$ if $a\in V(X_1)$, and $Q\check{\e}_a$ is the normalized characteristic vector of the cell containing the vertex $a$, otherwise. If $a\in V(G)\backslash V(X_1)$ belong to $V_j$, then $\e_a^TQ=\e_b^TQ=\frac{1}{\sqrt{|V_j|}}\check{\e}_j^T$. Thus, if $k\in V(G)\backslash V(X_1)$, then 
$\e_a^T(AQ)\check{\e}_k=\e_a^T(QB)\check{\e}_k=\frac{1}{\sqrt{|V_j|}}\check{\e}_j^TB\check{\e}_k=\frac{1}{\sqrt{|V_k|}}\displaystyle\sum_{v\in V_k}A_{a,v}$. Hence,
\begin{center}
$\displaystyle\sum_{v\in V_k}A_{a,v}= \sqrt\frac{|V_k|}{|V_j|}\check{\e}_j^TB\check{\e}_k=c_{j,k}.$
\end{center}
As the above sum is independent of the choice of vertex in $V_j$, $\Pi$ is equitable. 


Next, we prove $2\Leftrightarrow 3$. If 3 holds, then $AQ\check{\e}_u=QB\check{\e}_u$ for all $u$. Since $Q\check{\e}_u$ and $QB\check{\e}_u$ belong to the column space of $Q$, we obtain 2. The converse is straightforward.

Lastly, we prove $3\Leftrightarrow 4$.
If $AQ=QB$, then $Q^TA=BQ^T$ since $A$ and $B$ are symmetric. So, $AQQ^T=QBQ^T=QQ^TA$. Conversely, if $AQQ^T=QQ^TA$, then the fact $Q^TQ=I$ yields $Q^TAQQ^T=Q^TA$. If $B:=Q^TAQ$, then $BQ^T=Q^TA$, and because $A$ and $B$ are symmetric, we get $AQ=QB$. Finally, 
using the fact that $\mathcal{C}\mathcal{C}^T=\displaystyle\bigoplus_{k}\frac{1}{|V_k|}J_{|V_k|}$ gives us
\begin{equation*}
QQ^T
=\mathcal{B}\mathcal{B}^T+\mathcal{C}\mathcal{C}^T=\left(\frac{1}{2}\bigoplus_{|V(X_1)|}\begin{bmatrix}1&-1\\-1&1\end{bmatrix}\oplus O\right)+\left(\bigoplus_{|V(X_1)|}\frac{1}{2}J_2\oplus \bigoplus_{k>|V(X_1)|} \frac{1}{|V_k|}J_{|V_k|}\right),
\end{equation*}
where $O$ is the zero matrix of the appropriate size. This yields the form of $QQ^T$ in 4.

Combining the above cases proves the equivalence of 1-4. Finally, if a matrix $B$ in 3 exists, then $\Pi$ is equitable. As $\mathcal{B}^T\mathcal{B}=I,$ we get $\mathcal{B}^TA\mathcal{B}=A(X_1)-A'$ and $\mathcal{C}^TA\mathcal{C}=A(G/\Pi).$ Now, if $a\in V(X_1)$ and $b\in V(G)\backslash V(X_1)$, then $Q\check{\e}_a= \frac{1}{\sqrt{2}}\ob{\e_a-\e_{\hat{f}(a)}}$ and $Q\check{\e}_b$ is a characteristic vector of $\Pi$. As $X_1$ and $X_2$ are edge-perturbed twin subgraphs in $G$,
\begin{center}
$\check{\e}_a^T(Q^TAQ)\check{\e}_b=(Q\check{\e}_a)^TA(Q\check{\e}_b)=\frac{1}{\sqrt{2}}\ob{\e_a-\e_{\hat{f}(a)}}^TA(Q\check{\e}_b)=0.$
\end{center} 
Therefore, $B=Q^TAQ=\begin{bmatrix} \mathcal{B}^TA\mathcal{B} & \mathcal{B}^TA\mathcal{C}\\ \mathcal{C}^TA\mathcal{B} & \mathcal{C}^TA\mathcal{C}\end{bmatrix}$, and because $\mathcal{B}^TA\mathcal{B}=A(X_1)-A'$ and $\mathcal{C}^TA\mathcal{C}=A(G/\Pi)$, we obtain the form of $B.$
\end{proof}

Note that if $G$ is finite, then $Q$ is a $|V(G)|\times(|V(X_1)|+d)$ matrix.

Our next result is useful throughout this work.

\begin{thm}\label{mainthm}
Let $G$ be a bounded graph with edge-perturbed twin subgraphs $X_1$ and $X_2$, and let $U'(t)=\exp{\ob{it\ob{A\ob{X_1}-A'}}}$. If $G$ admits an equitable partition $\Pi$ where $\{a, \hat{f}(a)\}$ forms a cell for every $a\in V(X_1)$, then
\[Q^T U_G(t)Q=\begin{bmatrix}U'(t) & \mathbf{0}\\ \mathbf{0} & U_{G/\Pi}(t) \end{bmatrix},\quad \text{for all $t\in\Rl$}.\]
\end{thm}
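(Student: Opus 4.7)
The plan is to apply Theorem \ref{godgen} and then push the resulting intertwining identity $AQ = QB$ through the power series expansion of $\exp(itA)$. Since $\Pi$ is assumed equitable, the implication $(1) \Rightarrow (3)$ of Theorem \ref{godgen} gives $AQ = QB$ with
\[
B = \begin{bmatrix} A(X_1) - A' & \mathbf{0} \\ \mathbf{0} & A(G/\Pi) \end{bmatrix}.
\]
A routine induction on $k$ then yields $A^k Q = Q B^k$ for every $k \geq 0$, using $A^{k+1} Q = A(A^k Q) = (AQ) B^k = Q B^{k+1}$.

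Next, I would invoke boundedness to pass from polynomials in $A$ to the exponential. By Theorem \ref{infg}, $A = A(G)$ is a bounded linear operator on $\ell^2(\mathbb{Z}^+)$, so the series $U_G(t) = \sum_{k \geq 0} (it)^k A^k / k!$ converges in operator norm. Because the columns of $Q$ are orthonormal (I will verify this momentarily), $Q$ extends to a bounded linear map, and right-multiplication by $Q$ commutes with the norm-convergent series. Thus
\[
U_G(t) Q = \sum_{k \geq 0} \frac{(it)^k}{k!} A^k Q = Q \sum_{k \geq 0} \frac{(it)^k}{k!} B^k = Q \exp(itB).
\]

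To finish, I would verify $Q^T Q = I$ so that left-multiplying by $Q^T$ gives $Q^T U_G(t) Q = \exp(itB)$. The check is straightforward: distinct columns of $\mathcal{B}$ have disjoint supports since $\{a, \hat{f}(a)\} \cap \{a', \hat{f}(a')\} = \emptyset$ when $a \neq a'$ in $V(X_1)$; the columns of $\mathcal{C}$ are orthonormal by definition of $\Pi$; and each column $\frac{1}{\sqrt{2}}(\e_a - \e_{\hat{f}(a)})$ of $\mathcal{B}$ is orthogonal to every normalized cell vector of $\mathcal{C}$ because either the cell equals $\{a, \hat{f}(a)\}$ (giving cancellation between the two entries $\pm\frac{1}{2}$) or the cell is disjoint from $\{a, \hat{f}(a)\}$ (giving zero). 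Finally, since $B$ is block diagonal, so is $\exp(itB)$, with blocks $\exp(it(A(X_1) - A')) = U'(t)$ and $\exp(it A(G/\Pi)) = U_{G/\Pi}(t)$, which is the claimed form.

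The main technical point to handle with care is the passage through the infinite power series in the possibly infinite-dimensional setting: one must use the boundedness of $A$ (to get norm convergence) together with the fact that $Q$ is an isometry (so right-multiplication by $Q$ is continuous) in order to justify interchanging the sum with $Q$. Everything else is bookkeeping built on Theorem \ref{godgen}.
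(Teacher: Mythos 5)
Your proposal is correct and follows essentially the same route as the paper: both invoke Theorem \ref{godgen} to get the intertwining $AQ=QB$ and then push it through the norm-convergent power series of $\exp(itA)$ using boundedness of $A(G)$. The only cosmetic difference is that you derive $Q^TA^kQ=B^k$ by inducting on $A^kQ=QB^k$ and using $Q^TQ=I$, whereas the paper gets the same identity from the commutation of $QQ^T$ with $A$ (Theorem \ref{godgen}(4)); both are valid.
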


\begin{proof}
From Theorem \ref{godgen}(3), it follows that $AQ=QB$ where $B=\begin{bmatrix}
     A(X_1)-A' & \mathbf{0}\\
     \mathbf{0} & A(G/\Pi)
     \end{bmatrix}$.
     Thus, $Q^TA(G)Q=B.$ From Theorem \ref{godgen}(4), $QQ^T$ commutes with $A(G)$, and so
\begin{equation}
\label{eqpart}
Q^TA(G)^kQ=(Q^TA(G)Q)^k=B^k=\begin{bmatrix}
     (A(X_1)-A')^k & \mathbf{0}\\
     \mathbf{0} & (A(G/\Pi))^k
     \end{bmatrix}
\end{equation}
for all integers $k\geq 0$. Finally, since $G$ is bounded, Theorem \ref{infg} implies that power series expansion of $\exp{(itA(G))}$ converges, in which case (\ref{eqpart}) gives us
\begin{equation}
\label{aa}
\begin{split}
Q^TU_G(t)Q &=\sum_{k=0}^\infty\frac{(it)^k}{k!}(Q^TA(G)^kQ)=\sum_{k=0}^\infty\frac{(it)^k}{k!}B^k=\begin{bmatrix}
     U'(t) & \mathbf{0}\\
     \mathbf{0} & U_{G/\Pi}(t)
     \end{bmatrix},
\end{split}
\end{equation}
which establishes the desired result.
\end{proof}

If $G$ is not bounded, then it is not clear how we can simplify $Q^TU_G(t)Q$ in (\ref{aa}).

\begin{rem}
If the graph $G$ in Theorem \ref{mainthm} has an equitable partition $\Pi$ but has no (edge-perturbed) twin subgraphs, then the vectors in $\mathcal{B}$ are absent as columns of $Q$. In this case, $Q=\mathcal{C}$ and so we may view the result in Theorem \ref{mainthm} as an extension of (\ref{G/Pi}).
\end{rem}

\begin{rem}
\label{A'}
In Theorem \ref{mainthm}, if $X_1$ and $X_2$ are twin subgraphs and $G$ has no edges between $X_1$ and $X_2$, then $A'=0$, so $U'(t)$ is equal to $U_{X_1}(t)$, the transition matrix of $X_1$.   \end{rem}

Since $Q^TQ=I$, the form of $Q^TU_G(t)Q$ in Theorem \ref{mainthm} yields the following result.

\begin{cor}
\label{maincor}
With the assumption in Theorem \ref{mainthm}, the following hold.
\begin{enumerate}
\item Perfect state transfer occurs from $\u$ to $\v$ relative to $A(X_1)-A'$ if and only if perfect state transfer occurs from $\mathcal{B}\u$ to $\mathcal{B}\v$ in $G$ at the same time.
\item The vector $\u$ is periodic relative to $A(X_1)-A'$ if and only if $\mathcal{B}\u$ is periodic in $G$ at the same time.
\item Perfect state transfer occurs from $\u$ to $\v$ in $G/\Pi$ if and only if perfect state transfer occurs from $\mathcal{C}\u$ to $\mathcal{C}\v$ in $G$ at the same time.
\item The vector $\u$ is periodic in $G/\Pi$ if and only if $\mathcal{C}\u$ is periodic in $G$ at the same time.
\end{enumerate}
\end{cor}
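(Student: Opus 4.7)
The strategy is to combine the block-diagonal identity of Theorem \ref{mainthm} with the orthonormality relations $\mathcal{B}^T\mathcal{B}=I$, $\mathcal{C}^T\mathcal{C}=I$, and $\mathcal{B}^T\mathcal{C}=\mathbf{0}$ that are encoded in $Q^TQ=I$. Since the column space of $Q$ is $A$-invariant by Theorem \ref{godgen}, and $U_G(t)$ is a convergent power series in $A(G)$, this column space is also $U_G(t)$-invariant; hence $U_G(t)Q=QM(t)$ for some matrix $M(t)$. Left-multiplying by $Q^T$ and using $Q^TQ=I$ identifies $M(t)=Q^TU_G(t)Q$, which by Theorem \ref{mainthm} equals the block diagonal matrix with blocks $U'(t)$ and $U_{G/\Pi}(t)$. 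Splitting $U_G(t)Q=QM(t)$ into its two column-blocks then yields the intertwining relations
\[
U_G(t)\mathcal{B}=\mathcal{B}\,U'(t)\qquad\text{and}\qquad U_G(t)\mathcal{C}=\mathcal{C}\,U_{G/\Pi}(t).
\]

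With these intertwiners in place, each of the four statements reduces to a short verification. For statement 1, if $U'(\tau)\u=\gamma\v$ then $U_G(\tau)\mathcal{B}\u=\mathcal{B}\,U'(\tau)\u=\gamma\mathcal{B}\v$, giving the forward direction. Conversely, assuming $U_G(\tau)\mathcal{B}\u=\gamma\mathcal{B}\v$, I would apply $\mathcal{B}^T$ on the left and use $\mathcal{B}^TU_G(\tau)\mathcal{B}=U'(\tau)$ (the upper-left block of $Q^TU_G(\tau)Q$) together with $\mathcal{B}^T\mathcal{B}=I$ to recover $U'(\tau)\u=\gamma\v$. Since $\mathcal{B}^T\mathcal{B}=I$, the assignment $\u\mapsto\mathcal{B}\u$ is an isometry, so $\mathcal{B}\u$ is automatically a unit vector when $\u$ is, and the PST statement on $G$ is well-posed. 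Statement 2 is the special case $\v=\u$ of statement 1. Statements 3 and 4 follow by the identical argument with $\mathcal{B}$ and $U'(t)$ replaced throughout by $\mathcal{C}$ and $U_{G/\Pi}(t)$, invoking the lower-right block identity $\mathcal{C}^TU_G(t)\mathcal{C}=U_{G/\Pi}(t)$ and the relation $\mathcal{C}^T\mathcal{C}=I$.

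There is essentially no obstacle here: once Theorem \ref{mainthm} is in hand, the corollary is a routine unpacking of the block-diagonal form against the isometries $\mathcal{B}$ and $\mathcal{C}$. The only conceptual point worth emphasizing is that the vanishing off-diagonal block $\mathcal{B}^TU_G(t)\mathcal{C}=\mathbf{0}$ decouples the twin-difference subspace (the column space of $\mathcal{B}$) from the quotient subspace (the column space of $\mathcal{C}$) under the quantum walk, which is precisely what splits the corollary into two independent pairs of equivalences.
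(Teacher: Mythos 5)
Your proof is correct and follows essentially the same route the paper intends: the paper derives the corollary directly from $Q^TQ=I$ and the block-diagonal form of $Q^TU_G(t)Q$ in Theorem \ref{mainthm}, and your explicit intertwining relations $U_G(t)\mathcal{B}=\mathcal{B}U'(t)$ and $U_G(t)\mathcal{C}=\mathcal{C}U_{G/\Pi}(t)$ are just a careful unpacking of that same identity (the paper leaves this one-line verification to the reader). No gaps.
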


\begin{rem}
Corollary \ref{maincor}(1,3) hold if we replace `perfect state transfer' with `pretty good state transfer', while Corollary \ref{maincor}(2,4) hold if we replace `periodic' with `sedentary'. 
\end{rem}

\section{Signed graphs}\label{sec:sgned}

Here, we establish a connection between perfect state transfer in a positively weighted graph $G$ and in a weighted signed graph $\tilde{G}$ whenever $A(G)$ and $\pm A(\tilde{G})$ are similar.

\begin{prop}
\label{prop}
Let $G$ be a positively weighted graph and $D$ be an involutory matrix such that $DA(G)D=\delta A(\tilde{G})$ where $\delta=\pm 1$. Then $G$ has perfect state transfer from $\u$ to $\v$ at $\tau$ if and only if $\tilde{G}$ has perfect state transfer from $D\u$ to $D\v$ at time $\delta\tau$.
\end{prop}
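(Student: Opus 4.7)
The plan is to establish the identity $U_{\tilde G}(\delta\tau)=D\,U_G(\tau)\,D$ and then simply transport the PST equation through this conjugation. Everything is powered by two facts: $D$ is involutory (so $D^{-1}=D$, giving a genuine similarity), and $\delta^{2}=1$, which lets the sign $\delta$ absorb cleanly into the exponent.

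\medskip

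\noindent\textbf{Step 1 (rewriting the hypothesis).} From $DA(G)D=\delta A(\tilde G)$ and $D^{2}=I$, I would left- and right-multiply by $D$ to get the equivalent form
\[
A(\tilde G)=\delta\,D\,A(G)\,D.
\]
This also shows that $A(\tilde G)$ is similar to $\pm A(G)$ via an involution, and since $G$ is bounded (quantum walks are defined only in that setting by Theorem \ref{infg}), the identity $A(\tilde G)^{k}=\delta^{k}D\,A(G)^{k}\,D$ holds for every integer $k\ge 0$; the convergence of the exponential power series for $A(\tilde G)$ is therefore inherited from that of $A(G)$.

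\medskip

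\noindent\textbf{Step 2 (conjugating the transition matrix).} Substituting into the power series expansion, I would compute
\[
U_{\tilde G}(\delta\tau)=\sum_{k=0}^{\infty}\frac{(i\delta\tau)^{k}}{k!}A(\tilde G)^{k}
=\sum_{k=0}^{\infty}\frac{(i\delta\tau)^{k}\delta^{k}}{k!}D\,A(G)^{k}\,D
=D\left(\sum_{k=0}^{\infty}\frac{(i\tau)^{k}}{k!}A(G)^{k}\right)D=D\,U_G(\tau)\,D,
\]
where the middle equality uses $\delta^{2}=1$ to replace $\delta^{k}\cdot\delta^{k}$ with $1$ in each term (here I collected $(i\delta\tau)^{k}\delta^{k}=(i\tau)^{k}$). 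The only technicality is justifying that $D$ can be pulled in and out of the infinite sum, which is immediate in the finite case, and in the bounded infinite case follows because $D$ is bounded and the partial sums converge in operator norm.

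\medskip

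\noindent\textbf{Step 3 (transporting PST).} If $U_G(\tau)\u=\gamma\v$, then applying $D$ on the left and inserting $I=DD$ yields
\[
U_{\tilde G}(\delta\tau)(D\u)=D\,U_G(\tau)\,D\cdot D\u=D\,U_G(\tau)\u=\gamma(D\v),
\]
which is PST from $D\u$ to $D\v$ at time $\delta\tau$ in $\tilde G$. For the converse, I would invoke the same identity with the roles reversed: from $DA(G)D=\delta A(\tilde G)$ one also has $DA(\tilde G)D=\delta A(G)$ (again by $D^{2}=I$ and $\delta^{2}=1$), so the argument symmetrically recovers $U_G(\tau)\u=\gamma\v$ from $U_{\tilde G}(\delta\tau)(D\u)=\gamma(D\v)$, using $D(D\u)=\u$.

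\medskip

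There is no real obstacle here; the main thing to be careful about is tracking the two independent signs, namely the $\delta$ in the hypothesis and the $i^{k}$ in the exponential series, and making sure the bookkeeping $(i\delta\tau)^{k}\delta^{k}=(i\tau)^{k}$ is correct so that the conjugation produces $U_G(\tau)$ and not $U_G(\delta\tau)$ on the right-hand side. The other small point worth mentioning in the write-up is that $\v$ remains a unit vector after applying $D$ only when $D$ is an isometry; but the definition of PST used in the paper requires $\u,\v$ to be unit vectors and $\gamma\in\mathbb{C}$, and the statement implicitly assumes $D\u$ and $D\v$ are still unit vectors, which will be the case in every application (typically $D$ is a signed diagonal matrix and hence orthogonal).
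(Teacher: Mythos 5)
Your proposal is correct and follows essentially the same route as the paper: both establish the conjugation identity $U_{G}(\tau)=D\,U_{\tilde G}(\delta\tau)\,D$ (you derive it term-by-term from the power series, the paper asserts it directly) and then transport the PST equation through $D$ using $D^{2}=I$. Your extra remarks on norm convergence and on $D$ needing to be an isometry for $D\u$ to remain a unit vector are sensible refinements of the same argument, not a different one.
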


\begin{proof}
Since $DA(G)D=\delta A(\tilde{G})$, we have $U_{G}(t)=DU_{\tilde{G}}(\delta t)D$. If perfect state transfer occurs from $\u$ to $\v$ in $G$ at $\tau$, then $DU_{\tilde{G}}(\delta \tau)D\u=U_{G}(\tau)\u=\gamma\v$ for some $\gamma\in\Cl$. Since $D^2=I$, we obtain $U_{\tilde{G}}(\delta\tau)(D\u)=\gamma(D\v)$. The converse is straightforward.
\end{proof}

If $G$ is a positively weighted graph and $D$ is a diagonal matrix with $\pm 1$ entries with at least one entry equal to $-1$, then $\tilde{G}$ is a weighted signed graph. Additionally, if $G$ is unweighted, then the preceding statement implies that $\tilde{G}$ is an unweighted signed graph.

A weighted signed graph $\tilde{G}$ is \textit{balanced} (resp., \textit{anti-balanced}) relative to a positively weighted graph $G$ if $DA(G)D=A(\tilde{G})$ (resp., $DA(G)D=-A(\tilde{G})$) for some diagonal matrix $D$ with $\pm 1$ entries. The following is immediate from Proposition \ref{prop}.

\begin{cor}
\label{bal}
A positively weighted graph has perfect state transfer from $\u$ to $\v$ at $\tau$ if and only if the corresponding balanced (resp., anti-balanced) weighted signed graph has perfect  state transfer from $D\u$ to $D\v$ at $\tau$ (resp., $-\tau$).
\end{cor}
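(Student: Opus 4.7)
The corollary is essentially an immediate specialization of Proposition \ref{prop} to the setting of signed graphs, so my plan is to verify the hypotheses and then invoke the proposition separately in each of the two cases. The first step is to check that the matrix $D$ appearing in the definitions of balanced and anti-balanced signed graphs is involutory. Since by hypothesis $D$ is diagonal with all diagonal entries in $\{+1, -1\}$, we immediately get $D^2 = I$, so Proposition \ref{prop} applies with this $D$.

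Next, I would split the argument according to which of the two defining relations holds. In the balanced case, by definition there is a diagonal $\pm 1$ matrix $D$ with $DA(G)D = A(\tilde{G})$, which is exactly the hypothesis of Proposition \ref{prop} with $\delta = +1$. Applying the proposition then yields that $G$ has PST from $\u$ to $\v$ at time $\tau$ if and only if $\tilde{G}$ has PST from $D\u$ to $D\v$ at time $\delta \tau = \tau$. In the anti-balanced case, the definition supplies $D$ with $DA(G)D = -A(\tilde{G})$, matching Proposition \ref{prop} with $\delta = -1$, and the proposition gives PST in $\tilde{G}$ from $D\u$ to $D\v$ at time $\delta \tau = -\tau$. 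Combining the two cases gives the ``$\tau$ (resp.\ $-\tau$)'' statement.

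Since Proposition \ref{prop} has already done the analytic work of translating the similarity $DA(G)D = \delta A(\tilde{G})$ into the identity $U_G(t) = D U_{\tilde{G}}(\delta t) D$ and deducing the PST equivalence, there is no real obstacle left. The only point that requires some care in the write-up is bookkeeping the sign $\delta$ so that the parenthetical ``(resp., $-\tau$)'' in the conclusion is aligned consistently with the parenthetical ``(resp., anti-balanced)'' in the hypothesis; otherwise the proof reduces to a one-line citation of the proposition in each case.
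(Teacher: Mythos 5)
Your proposal is correct and matches the paper exactly: the paper states that the corollary is immediate from Proposition \ref{prop}, and your argument spells out precisely that specialization, noting that a diagonal $\pm 1$ matrix is involutory and tracking $\delta = \pm 1$ through the two cases. No gaps.
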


Using positively weighted graphs with pair PST (resp., plus PST), we generate weighted signed graphs with plus PST (resp., pair PST) or PST between a plus state and a pair state. In what follows, $w(a,b)$ and $\tilde{w}(a,b)$ denote the weights of the edge $\{a,b\}$ in $G$ and $\tilde{G}$, respectively. For a diagonal matrix $D$, we let $D_a$ denote the $a$--th diagonal entry of $D$.

\begin{cor}
\label{cor}
Suppose $G$ is a positively weighted graph with perfect state transfer between $\frac{1}{\sqrt2}(\e_a-\e_b)$ and $\frac{1}{\sqrt2}(\e_c-\e_d)$ at $\tau$. The following hold.
\begin{enumerate}[leftmargin=*]
\item Let $\tilde{G}$ be the weighted signed graph with $\tilde{w}\ob{b,x}=-w(b,x)$ for all $x\in N_G(b)$.
\begin{enumerate}[leftmargin=*]
\item If $b\neq c,d$, then $\tilde{G}$ admits perfect state transfer between $\frac{1}{\sqrt2}(\e_a+\e_b)$ and $\frac{1}{\sqrt2}(\e_c-\e_d)$. 
\item Otherwise, $\tilde{G}$ admits perfect state transfer between $\frac{1}{\sqrt2}(\e_a+\e_b)$ and $\frac{1}{\sqrt2}(\e_c+\e_d).$  
\end{enumerate}
\item Let $\tilde{G}$ be the weighted signed graph with $\tilde{w}\ob{b,x}=-w\ob{b,x}$ and $\tilde{w}\ob{d,y}=-w(d,y)$ for all $x\in N_G(b)\backslash \cb{d}$ and for all $y\in N_G(d)\backslash \cb{b}$.
\begin{enumerate}
\item If $a=d,$ then $\tilde{G}$ admits perfect state transfer between $\frac{1}{\sqrt2}(\e_b-\e_d)$ and $\frac{1}{\sqrt2}(\e_c+\e_d).$
\item If $b=c,$ then $\tilde{G}$ admits perfect state transfer between $\frac{1}{\sqrt2}(\e_a+\e_b)$ and $\frac{1}{\sqrt2}(\e_d-\e_b).$
\item Otherwise, $\tilde{G}$ admits perfect state transfer between $\frac{1}{\sqrt2}(\e_a+\e_b)$ and $\frac{1}{\sqrt2}(\e_c+\e_d)$. 
\end{enumerate}
   \end{enumerate}
Furthermore, perfect state transfer occurs in 1 and 2 at $\tau$.
 \end{cor}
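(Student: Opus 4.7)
The plan is to realize each weighted signed graph $\tilde{G}$ in the statement as a graph balanced relative to $G$ in the sense of Corollary \ref{bal}, by conjugating $A(G)$ with a suitable diagonal $\pm 1$ matrix $D$. The key observation is elementary: if $D$ has $D_x=-1$ exactly for $x$ in some set $S\subseteq V(G)$ and $D_x=1$ otherwise, then $(DA(G)D)_{u,v}=A(G)_{u,v}$ when $|\{u,v\}\cap S|$ is even and equals $-A(G)_{u,v}$ otherwise. Once the appropriate $S$ is identified so that $DA(G)D=A(\tilde{G})$, Corollary \ref{bal} (with $\delta=+1$) immediately produces perfect state transfer from $D\u$ to $D\v$ at the same time $\tau$. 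It then remains only to compute $D\u$ and $D\v$, using the fact that multiplying a single component by $-1$ converts $\e_u-\e_v$ into $\pm(\e_u+\e_v)$, with the global sign absorbed into the PST phase $\gamma$.

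\textbf{Part 1.} I take $S=\{b\}$, so that $D$ negates precisely the entries of $A(G)$ indexed by one endpoint equal to $b$ and the other different from $b$, yielding $DA(G)D=A(\tilde{G})$ as specified. Since $D\u=\frac{1}{\sqrt{2}}(\e_a+\e_b)$, case (a) follows immediately because $b\neq c,d$ leaves $D\v=\frac{1}{\sqrt{2}}(\e_c-\e_d)$ unchanged, while in case (b) the equality $b=c$ or $b=d$ makes $D\v$ equal to $\pm\frac{1}{\sqrt{2}}(\e_c+\e_d)$.

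\textbf{Part 2.} I take $S=\{b,d\}$. Now $DA(G)D$ negates exactly those edges with exactly one endpoint in $\{b,d\}$; the potential $\{b,d\}$-edge itself is unchanged since the two sign flips cancel, which matches the description of $\tilde{G}$. Expanding $D\u$ and $D\v$ case by case completes the argument: in (a), $D\e_a=-\e_d$ collapses $D\u$ into a pair state on $\{b,d\}$ while $D\v$ becomes a plus state on $\{c,d\}$; in (b), $D\e_c=-\e_b$ collapses $D\v$ into a pair state on $\{d,b\}$ while $D\u$ becomes a plus state on $\{a,b\}$; in (c), neither $a$ nor $c$ lies in $\{b,d\}$, so both $D\u$ and $D\v$ become plus states. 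The only delicate point is book-keeping these overlap cases carefully and tracking the occasional global sign; the equality of transfer times in 1 and 2 is automatic from the $\delta=+1$ branch of Corollary \ref{bal}, so no deeper obstacle arises.
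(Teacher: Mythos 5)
Your proposal is correct and follows essentially the same route as the paper: realize $\tilde{G}$ as balanced relative to $G$ via the diagonal matrix $D$ with $D_b=-1$ (Part 1) or $D_b=D_d=-1$ (Part 2) and all other entries $+1$, then apply Corollary \ref{bal} and read off $D\u$ and $D\v$, absorbing any global sign into the PST phase. The case analysis and the observation that the $\{b,d\}$-edge is left unchanged in Part 2 match the paper's argument exactly.
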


\begin{proof}
To prove 1, note that $\tilde{G}$ is balanced relative to $G$ since $DA(G)D=A(\tilde{G})$, where $D$ is the diagonal matrix of $\pm 1$'s with $D_j=-D_b=1$ for $j\in\{a,c,d\}$. 
Since $G$ has PST between $\u=\frac{1}{\sqrt2}(\e_a-\e_b)$ and $\v=\frac{1}{\sqrt2}(\e_c-\e_d)$, Corollary \ref{bal} yields PST in $\tilde{G}$ between $D\u=\frac{1}{\sqrt2}(\e_a+\e_b)$ and $D\v=\frac{1}{\sqrt2}(\e_c-\e_d)$ if $b\notin\{c,d\}$, and between $D\u=\frac{1}{\sqrt2}(\e_a+\e_b)$ and $D\v=\pm\frac{1}{\sqrt2}(\e_c+\e_d)$ otherwise. The same argument works for 2 by taking $D$ such that $D_a=D_{c}=-D_b=-D_{d}=1$.
\end{proof}

\begin{rem}
\label{rem3}
If $G$ is a positively weighted graph with perfect state transfer between $\frac{1}{\sqrt2}(\e_a+\e_b)$ and $\frac{1}{\sqrt2}(\e_c+\e_d)$, then the conclusion in Corollary \ref{cor} holds whenever we replace every pair state with a plus state, and every plus state with a pair state.
\end{rem}

Applying a sign change to the edges of a graph with edge-perturbed twin subgraphs yields the next result. Note that $U_{\tilde{G}_D}(t)$ denotes the transition matrix of $\tilde{G}_D$. 

\begin{thm}\label{3t3}
Let $G$ be a bounded graph with edge-perturbed twin subgraphs $X_1$ and $X_2$ with isomorphism $f:X_1\rightarrow X_2$. Suppose $D$ is a diagonal matrix of $\pm 1$'s 
and let $\tilde{G}_D$ be the graph satisfying $DA(G)D=A(\tilde{G}_D)$. 
If $U'(t)=\exp{\ob{it\ob{A\ob{X_1}-A'}}}$, then
    \[
    (DQ)^T U_{\tilde{G}_D}(t)DQ=\begin{bmatrix}
     U'(t) & \mathbf{0}\\
     \mathbf{0} & U_{G/\Pi}(t)
     \end{bmatrix},\quad \text{for all $t\in\mathbb{R}$}.
    \]
\end{thm}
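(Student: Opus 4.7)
The plan is to reduce this directly to Theorem~\ref{mainthm} by exploiting the fact that $D$ is involutory. Since $D$ is a diagonal matrix with $\pm 1$ entries we have $D^2 = I$ and $D^T = D$, so conjugation by $D$ preserves symmetry, and the relation $DA(G)D = A(\tilde{G}_D)$ can be iterated to all powers.

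First, I would observe that for every integer $k \ge 0$,
\[
A(\tilde{G}_D)^k = (DA(G)D)^k = D A(G)^k D.
\]
Because $G$ is bounded and $D$ is unitary, $\tilde{G}_D$ is also bounded (they have the same absolute degrees at each vertex), so Theorem~\ref{infg} ensures that the power series defining $U_{\tilde{G}_D}(t)$ converges in operator norm. Summing termwise then gives the conjugation identity
\[
U_{\tilde{G}_D}(t) = \exp(itDA(G)D) = D\exp(itA(G))D = DU_G(t)D.
\]

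Next, I would use this to rewrite the quantity of interest. Since $D^T = D$ and $D^2=I$,
\[
(DQ)^T U_{\tilde{G}_D}(t) DQ = Q^T D \cdot DU_G(t)D \cdot DQ = Q^T U_G(t) Q.
\]
At this point the hypotheses of Theorem~\ref{mainthm} apply verbatim (the equitable partition $\Pi$ with cells $\{a,\hat{f}(a)\}$ for $a \in V(X_1)$ is a property of $G$, not of $\tilde{G}_D$), and so
\[
Q^T U_G(t) Q = \begin{bmatrix} U'(t) & \mathbf{0} \\ \mathbf{0} & U_{G/\Pi}(t) \end{bmatrix},
\]
which is exactly the claimed formula.

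There is no real obstacle here; the only point to be careful about is justifying the termwise manipulation of the matrix exponential in the infinite setting, which is handled by invoking Theorem~\ref{infg} to guarantee that $A(G)$ (and hence $A(\tilde{G}_D) = DA(G)D$) is a bounded linear operator on $\ell^2(\mathbb{Z}^+)$. Once the identity $U_{\tilde{G}_D}(t) = DU_G(t)D$ is in hand, everything collapses to Theorem~\ref{mainthm}.
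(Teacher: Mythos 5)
Your proposal is correct and is essentially the paper's own argument: the paper likewise notes that $U_{G}(t)=DU_{\tilde{G}_D}(t)D$ and then invokes Theorem~\ref{mainthm}. You simply spell out the power-series justification of the conjugation identity and the cancellation $D^2=I$, which the paper leaves implicit.
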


\begin{proof}
This follows from Theorem \ref{mainthm} and the fact that $U_{G}(t)=DU_{\tilde{G}_D}(t)D$. 
\end{proof}


\section{Pair and plus state transfer}\label{sec:prpl}

We now investigate pair and plus state transfer in a graph $\tilde{G}_D$ obtained from a graph $G$ with edge-perturbed twin subgraphs.

\begin{cor}
\label{mainres}
Let $G$ be a bounded graph with edge-perturbed twin subgraphs $X_1$ and $X_2$, and let $a,b$ be vertices of $X_1$. The following are equivalent.
\begin{enumerate}
\item Perfect state transfer occurs between $a$ and $b$ in $X_1$ relative to $A(X_1)-A'$.
\item Perfect state transfer occurs between $\frac{1}{\sqrt2}(\e_a-\e_{\hat{f}(a)})$ and $\frac{1}{\sqrt2}(\e_b-\e_{\hat{f}(b)})$ in $G$.%
\item Perfect state transfer occurs between $\frac{1}{\sqrt2}(\e_a+\e_{\hat{f}(a)})$ and $\frac{1}{\sqrt2}(\e_b+\e_{\hat{f}(b)})$ in $\tilde{G}_D$, where $D$ is a diagonal matrix of $\pm 1$'s with $D_a=-D_{\hat{f}(a)}$ and $D_b=-D_{\hat{f}(b)}$.
\item Perfect state transfer occurs between $\frac{1}{\sqrt2}(\e_a+\e_{\hat{f}(a)})$ and $\frac{1}{\sqrt2}(\e_b-\e_{\hat{f}(b)})$ in $\tilde{G}_D$, where $D$ is a diagonal matrix of $\pm 1$'s with $D_a=-D_{\hat{f}(a)}$ and $D_b=D_{\hat{f}(b)}$.
\end{enumerate}
Furthermore, perfect state transfer occurs at the same time in 1-4.
\end{cor}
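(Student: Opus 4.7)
I would prove all four equivalences by unifying them under the block-diagonal sandwich identities of Theorems \ref{mainthm} and \ref{3t3}: the implication $1\Leftrightarrow 2$ is a direct specialization of Corollary \ref{maincor}(1), and $1\Leftrightarrow 3$, $1\Leftrightarrow 4$ follow from an analogue of that corollary for $\tilde{G}_D$. The common idea is that the columns of $\mathcal{B}$ (respectively, of $D\mathcal{B}$) indexed by vertices of $X_1$ are exactly the pair (respectively, plus or pair) states appearing in the statement, once a suitable sign pattern in $D$ is chosen.

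First I would handle $1\Leftrightarrow 2$. Apply Corollary \ref{maincor}(1) with $\u=\e_a$ and $\v=\e_b$, viewed as vertex states of $X_1$. Since the column of $\mathcal{B}$ indexed by $a\in V(X_1)$ is $\mathcal{B}\e_a=\tfrac{1}{\sqrt{2}}(\e_a-\e_{\hat{f}(a)})$, and similarly for $b$, the corollary immediately converts PST between $\e_a$ and $\e_b$ relative to $A(X_1)-A'$ into PST in $G$ between the stated pair states, at the same time.

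Next, to handle $1\Leftrightarrow 3$ and $1\Leftrightarrow 4$, I would establish the $\tilde{G}_D$-analogue of Corollary \ref{maincor}(1): PST between $\u$ and $\v$ relative to $A(X_1)-A'$ is equivalent to PST between $D\mathcal{B}\u$ and $D\mathcal{B}\v$ in $\tilde{G}_D$ at the same time. This is proved exactly as Corollary \ref{maincor}(1), using the identity $U_{\tilde{G}_D}(t)(DQ)=(DQ)\,\mathrm{diag}(U'(t),U_{G/\Pi}(t))$, which follows by left-multiplying $U_G(t)Q=Q\,\mathrm{diag}(U'(t),U_{G/\Pi}(t))$ (a consequence of $AQ=QB$ from Theorem \ref{godgen}(3)) by $D$ and invoking $D^2=I$ together with $U_{\tilde{G}_D}(t)=DU_G(t)D$. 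Specializing to $\u=\e_a,\v=\e_b$ gives $D\mathcal{B}\e_a=\tfrac{1}{\sqrt{2}}(D_a\e_a-D_{\hat{f}(a)}\e_{\hat{f}(a)})$ and analogously for $b$. Under $D_a=-D_{\hat{f}(a)}$ this simplifies to $\tfrac{D_a}{\sqrt{2}}(\e_a+\e_{\hat{f}(a)})$, a $\pm$ plus state; the analogous simplifications at $b$ produce a $\pm$ plus state in case 3 (where $D_b=-D_{\hat{f}(b)}$) and a $\pm$ pair state in case 4 (where $D_b=D_{\hat{f}(b)}$).

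The main subtlety to watch is the overall sign $D_a,D_b\in\{\pm1\}$ appearing in $D\mathcal{B}\e_a$ and $D\mathcal{B}\e_b$: these scalars must be absorbed into the PST phase $\gamma\in\mathbb{C}$, so that PST between $\pm$-plus/pair states coincides with PST between plus/pair states in the paper's sense. Apart from this bookkeeping, the equivalences are automatic, because the diagonal blocks $U'(\tau)$ and $U_{G/\Pi}(\tau)$ are identical across all three sandwich identities, forcing the PST time $\tau$ to agree across statements 1--4.
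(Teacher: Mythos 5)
Your proposal is correct and follows essentially the same route as the paper: the equivalence $1\Leftrightarrow 2$ via Corollary \ref{maincor}(1) with $\mathcal{B}\e_a=\tfrac{1}{\sqrt2}(\e_a-\e_{\hat f(a)})$, and the signed cases via the conjugated sandwich identity of Theorem \ref{3t3} (which you re-derive rather than cite), with the residual signs $D_a,D_b$ absorbed into the phase exactly as the paper's $\delta=\pm1$.
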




\begin{proof}
The equivalence of 1 and 2 is immediate from Corollary \ref{maincor}(1). Let $\delta=\pm 1$. The equivalence of $2$ and $3$ follows from Theorem \ref{3t3} by taking $D$ such that $D_a=-D_{\hat{f}(a)}$ and $D_b=-D_{\hat{f}(b)}$, in which case $DQ\check{\e}_a=\frac{\delta}{\sqrt{2}}\ob{\e_a+\e_{\hat{f}(a)}}$ and $DQ\check{\e}_b=\frac{\delta}{\sqrt{2}}\ob{\e_b+\e_{\hat{f}(b)}}$. The equivalence of 2 and 4 follows from Theorem \ref{3t3} by taking $D$ such that $D_a=-D_{\hat{f}(a)}$ and $D_b=D_{\hat{f}(b)}$, in which case $DQ\check{\e}_a=\frac{\delta}{\sqrt{2}}\ob{\e_a+\e_{\hat{f}(a)}}$ and $DQ\check{\e}_b=\frac{\delta}{\sqrt{2}}\ob{\e_b-\e_{\hat{f}(b)}}$.
\end{proof}

The equivalence of 1 and 2 in Corollary \ref{mainres} was used by Pal and Mohapatra to obtain an infinite family of trees with maximum degree three admitting pair PST \cite{pal10}.
    
\begin{cor}
\label{mainres1}
Let $G$ be a bounded graph and $\Pi$ be an equitable partition of $G$ with cells $V_1=\{a,b\}$ and $V_2=\{c,d\}$.
The following are equivalent.
\begin{enumerate}
\item Perfect state transfer occurs between $V_1$ and $V_2$ in $G/\Pi$.
\item Perfect state transfer occurs between $\frac{1}{\sqrt2}(\e_a+\e_b)$ and $\frac{1}{\sqrt2}(\e_c+\e_d)$ in $G$.
\item Perfect state transfer occurs between $\frac{1}{\sqrt2}(\e_a-\e_{b})$ and $\frac{1}{\sqrt2}(\e_c-\e_{d})$ in $\tilde{G}_D$, where $D$ is a diagonal matrix of $\pm 1$'s with $D_a=-D_{b}$ and $D_c=-D_{d}$.
\item Perfect state transfer occurs between $\frac{1}{\sqrt2}(\e_a-\e_{b})$ and $\frac{1}{\sqrt2}(\e_c+\e_{d})$ in $\tilde{G}_D$, where $D$ is a diagonal matrix of $\pm 1$'s with $D_a=-D_b$ and $D_c=D_d$.
\end{enumerate}
Furthermore, perfect state transfer occurs at the same time in 1-4.
\end{cor}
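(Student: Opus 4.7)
The plan is to treat Corollary \ref{mainres1} as the equitable-partition analogue of Corollary \ref{mainres}: instead of working with the twin-subgraph block of $Q$, we work with the normalized characteristic matrix $\mathcal{C}$ of $\Pi$. The driving observation is that because $V_1$ and $V_2$ each have exactly two elements, the columns of $\mathcal{C}$ indexed by $V_1$ and $V_2$ are exactly the plus states $\frac{1}{\sqrt{2}}(\e_a+\e_b)$ and $\frac{1}{\sqrt{2}}(\e_c+\e_d)$. This identification converts the characteristic-matrix machinery of (\ref{G/Pi}) into the plus-state language appearing in statement 2.

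For the equivalence $1\Leftrightarrow 2$, I would apply Corollary \ref{maincor}(3) with $\u=\check{\e}_{V_1}$ and $\v=\check{\e}_{V_2}$, so that $\mathcal{C}\u=\frac{1}{\sqrt{2}}(\e_a+\e_b)$ and $\mathcal{C}\v=\frac{1}{\sqrt{2}}(\e_c+\e_d)$. This gives, with no additional computation, that PST between the cells $V_1$ and $V_2$ in $G/\Pi$ at time $\tau$ is equivalent to plus PST between these two plus states in $G$ at the same time $\tau$.

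For the equivalences $2\Leftrightarrow 3$ and $2\Leftrightarrow 4$, I would invoke Proposition \ref{prop} with $\delta=1$, selecting the diagonal involution $D$ so that $D$ converts the plus states in $G$ into the target vectors in $\tilde{G}_D$. For $2\Leftrightarrow 3$, choose $D_a=-D_b$ and $D_c=-D_d$; then
\[
D\cdot\tfrac{1}{\sqrt{2}}(\e_a+\e_b)=\pm\tfrac{1}{\sqrt{2}}(\e_a-\e_b)\quad\text{and}\quad D\cdot\tfrac{1}{\sqrt{2}}(\e_c+\e_d)=\pm\tfrac{1}{\sqrt{2}}(\e_c-\e_d),
\]
and the global sign is absorbed into the phase factor $\gamma$ from the definition of PST. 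For $2\Leftrightarrow 4$, choose $D_a=-D_b$ and $D_c=D_d$, so the first plus state becomes (up to sign) a pair state while the second is fixed up to sign. Proposition \ref{prop} then yields the stated equivalences at the same time $\tau$.

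The proof contains no real obstacle beyond careful sign bookkeeping: since each $D_x=\pm 1$, the image $D\u$ agrees with the intended pair or plus state up to an overall $\pm 1$ that is absorbed in $\gamma$. The only point to verify cleanly is that Proposition \ref{prop} applies here even though $G$ is not assumed to be positively weighted in Corollary \ref{mainres1}; an inspection of its proof shows that only the identity $U_G(t)=DU_{\tilde{G}_D}(t)D$, which follows directly from $DA(G)D=A(\tilde{G}_D)$ together with $D^2=I$, is needed, so no positivity is actually used.
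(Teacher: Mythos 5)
Your proof is correct and follows essentially the same route as the paper: the equivalence $1\Leftrightarrow 2$ via Corollary \ref{maincor}(3) applied to the quotient's standard basis vectors, and the equivalences with $3$ and $4$ via conjugation by a diagonal $\pm 1$ matrix (the paper cites Theorem \ref{3t3} where you cite Proposition \ref{prop}, but both rest on the same identity $U_G(t)=DU_{\tilde{G}_D}(t)D$). Your observation that the positivity hypothesis in Proposition \ref{prop} is not actually used is accurate and handles the only mismatch in hypotheses.
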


\begin{proof}
The equivalence of 1 and 2 follows from Corollary \ref{maincor}(3) by taking $(\u,\v)=(\e_1,\e_2)$ and
$(\mathcal{C}\e_1,\mathcal{C}\e_2)=\left(\frac{1}{\sqrt2}(\e_a+\e_b),\frac{1}{\sqrt2}(\e_c+\e_d)\right)$. The rest follows using the same argument in the proof of Corollary \ref{mainres}.
\end{proof}



\begin{exa}
Let $A(G)$ be the adjacency matrix of the unweighted graph $G$ given in Figure \ref{3fig1}(a). Let $V_1=\cb{1,2},$ $V_2=\cb{3},$ $V_3=\cb{4,5},$ and $V_4=\cb{6}$ be disjoint cells of an equitable partition $\Pi$ of the vertex set of $G.$ The characteristic matrix $\mathcal{C}$ and the adjacency matrix $A(G/\Pi)$ of the symmetrized quotient graph of $G$ over $\Pi$ are as follows.

\[\mathcal{C}=\begin{bmatrix}
\frac{1}{\sqrt{2}} & \frac{1}{\sqrt{2}} & 0 & 0 & 0 & 0\\
0 & 0 & 1 & 0 & 0 & 0\\
0 & 0 & 0 & \frac{1}{\sqrt{2}} & \frac{1}{\sqrt{2}} & 0\\
0 & 0 & 0 & 0 & 0 & 1
\end{bmatrix}^T, \quad A(G/\Pi)=\begin{bmatrix}
0 & \sqrt{2} & 0 & \sqrt{2}\\
\sqrt{2} & 0 & \sqrt{2} & 0 \\
0 & \sqrt{2} & 0 & \sqrt{2} \\
\sqrt{2} & 0 & \sqrt{2} & 0
\end{bmatrix}.\]
Let $D'=\text{diag}\ob{1, -1, 1, 1, -1, 1}$, $D''=\text{diag}\ob{1, 1, 1, 1, -1, 1}.$ We observe the following.

\begin{enumerate}
\item The adjacency matrix of a weighted $C_4$ with each edge having a weight $\sqrt{2}$ is $A(G/\Pi)$. Since the unweighted $C_4$ has perfect state transfer between vertices $1$ and $3$ at $\frac{\pi}{2}$, $U_{G/\Pi}\ob{\frac{\pi}{2\sqrt 2}}\check{\e}_1=-\check{\e}_3,$ where $\check{\e}_1=\tb{1~0~0~0}^T$ and $\check{\e}_3=\tb{0~0~1~0}^T.$ By Corollary \ref{mainres1}(2), we get perfect state transfer between $\frac{1}{\sqrt2}(\e_1+\e_2)$ and $\frac{1}{\sqrt2}(\e_4+\e_5)$ in $G$ at $\frac{\pi}{2\sqrt 2}$. 

\item Using $D'$, Corollary \ref{mainres1}(3) yields perfect state transfer between $\frac{1}{\sqrt2}(\e_1-\e_2)$ and $\frac{1}{\sqrt2}(\e_4-\e_5)$ at $\frac{\pi}{2\sqrt 2}$ in $\tilde{G}_{D'}$, see Figure \ref{3fig1}(b). Using $D''$, Corollary \ref{mainres1}(4) yields perfect state transfer at $\frac{\pi}{2\sqrt 2}$ between $\frac{1}{\sqrt2}(\e_1+\e_2)$ and $\frac{1}{\sqrt2}(\e_4-\e_5)$ in $\tilde{G}_{D''}$, see Figure \ref{3fig1}(c). 
\end{enumerate}
\end{exa}

\begin{rem}
Corollaries \ref{mainres} and \ref{mainres1} hold when we replace `perfect state transfer' with `pretty good state transfer'. In this case, PGST occurs at the same sequence of times in 1-4.
\end{rem}

\begin{figure}
\begin{multicols}{3}
\begin{center}
                    \begin{tikzpicture}[scale=0.6,auto=center]
                       \tikzstyle{every node}=[circle, thick, black!90, fill=white, scale=0.65]
              \node[draw,minimum size=0.55cm, 
                inner sep=0 pt] (1) at (-3,3) {$1$};
				  \node[draw,minimum size=0.55cm, inner sep=0 pt] (2) at (3,3) {$2$};
				  \node[draw,minimum size=0.55cm, 
                  inner sep=0 pt] (3) at (0,0.2)  {$3$};
                  \node[draw,minimum size=0.55cm, 
                  inner sep=0 pt] (4) at (-1.5,3) {$4$};
                  \node[draw,minimum size=0.55cm, inner sep=0 pt] (5) at (1.5, 3) {$5$};
                  \node[draw,minimum size=0.55cm, inner sep=0 pt] (6) at (0,5.8) {$6$};

            \draw[thick, black!90] (1)-- (3)--(2)--(6)--(1);
            \draw[thick, black!90](2)--(3)--(4)--(6)--(5)--(3);
          
                \end{tikzpicture}

                  \begin{tikzpicture}[scale=0.6,auto=center]
                       \tikzstyle{every node}=[circle, thick, black!90, fill=white, scale=0.65]
              \node[draw,minimum size=0.55cm, 
                inner sep=0 pt] (1) at (-3,3) {$1$};
				  \node[draw,minimum size=0.55cm, inner sep=0 pt] (2) at (3,3) {$2$};
				  \node[draw,minimum size=0.55cm, 
                  inner sep=0 pt] (3) at (0,0.2)  {$3$};
                  \node[draw,minimum size=0.55cm, 
                  inner sep=0 pt] (4) at (-1.5,3) {$4$};
                  \node[draw,minimum size=0.55cm, inner sep=0 pt] (5) at (1.5, 3) {$5$};
                  \node[draw,minimum size=0.55cm, inner sep=0 pt] (6) at (0,5.8) {$6$};

            \draw[thick, black!90] (1)-- (3);
            \draw[thick, black!90](3)--(4)--(6)--(1);
            \draw[thick, dashed, black!90](3)--(2)--(6)--(5)--(3);

\end{tikzpicture}

                \begin{tikzpicture}[scale=0.6,auto=center]
                       \tikzstyle{every node}=[circle, thick, black!90, fill=white, scale=0.65]
              \node[draw,minimum size=0.55cm, 
                inner sep=0 pt] (1) at (-3,3) {$1$};
				  \node[draw,minimum size=0.55cm, inner sep=0 pt] (2) at (3,3) {$2$};
				  \node[draw,minimum size=0.55cm, 
                  inner sep=0 pt] (3) at (0,0.2)  {$3$};
                  \node[draw,minimum size=0.55cm, 
                  inner sep=0 pt] (4) at (-1.5,3) {$4$};
                  \node[draw,minimum size=0.55cm, inner sep=0 pt] (5) at (1.5, 3) {$5$};
                  \node[draw,minimum size=0.55cm, inner sep=0 pt] (6) at (0,5.8) {$6$};

            \draw[thick, black!90] (1)-- (3)--(2)--(6);
            \draw[thick, black!90](3)--(4)--(6)--(1);
            \draw[thick, dashed, black!90](3)--(5)--(6);

                \end{tikzpicture}
                \end{center}
\end{multicols}
		\caption{Perfect state transfer between $\frac{1}{\sqrt2}(\e_1+\e_2)$ and $\frac{1}{\sqrt2}(\e_4+\e_5)$ (left), between $\frac{1}{\sqrt2}(\e_1-\e_2)$ and $\frac{1}{\sqrt2}(\e_4-\e_5)$ (centre), and between $\frac{1}{\sqrt2}(\e_1+\e_2)$ and $\frac{1}{\sqrt2}(\e_4-\e_5)$ (right)}
  \label{3fig1}
	\end{figure}
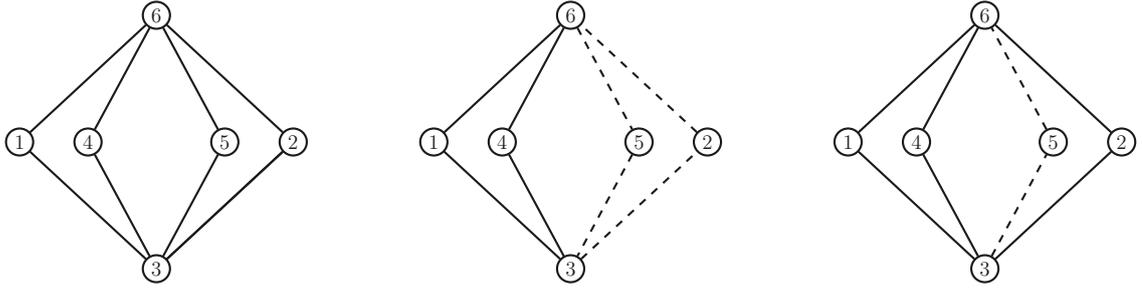 

\section{Infinite families}\label{sec:inf}

In this section, we assume that the graphs and signed graphs are unweighted (i.e., they have edge weights 1 and $\pm 1$, respectively).

It is known that $P_2$ admits perfect state transfer between the vertices $1$ and $2$ at $\frac{\pi}{2}$. Taking the graph on the left of Figure \ref{fi}, 
we find that for any graph $H$ identified at vertex 3, perfect state transfer occurs in $G$ between $\frac{1}{\sqrt2}(\e_1-\e_5)$ and $\frac{1}{\sqrt2}(\e_2-\e_4)$ at $\frac{\pi}{2}$ by Corollary \ref{mainres}(2). 
In particular, if $H$ is a graph, planar graph or tree with maximum degree $k$, then:

\begin{thm}
\label{inftreesmaxdeg3}
For each $k\geq 3$, there are infinitely many connected unweighted graphs (resp., planar graphs, trees) with maximum degree $k$ admitting pair state transfer at $\frac{\pi}{2}$.
\end{thm}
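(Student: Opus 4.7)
The plan is to realize the construction sketched just before the theorem statement, using Corollary \ref{mainres} to lift vertex PST in $P_2$ to pair PST in a large family of ambient graphs. Fix $k\geq 3$ and let $G_0$ denote the graph on the left of Figure \ref{fi}, with $X_1=\{1,2\}$ and $X_2=\{5,4\}$ identified as isomorphic copies of $P_2$ via $f(1)=5,\ f(2)=4$. A direct inspection confirms that $X_1$ and $X_2$ are twin subgraphs of $G_0$: the induced subgraph on $V(X_1)\cup V(X_2)$ equals the disjoint union $X_1\cup X_2$; vertices $1$ and $f(1)=5$ have no neighbours outside $V(X_1)\cup V(X_2)$; and vertices $2$ and $f(2)=4$ each have exactly one external neighbour, namely vertex $3$, with identical edge weight.

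Given any connected graph $H$ with a distinguished vertex $v$, let $G_H$ be the graph obtained by identifying $v$ with vertex $3$ of $G_0$. The twin condition still holds in $G_H$, since the identification only alters the neighbourhood of vertex $3$, which already lies outside $V(X_1)\cup V(X_2)$. Because no edges join $X_1$ to $X_2$, Remark \ref{A'} gives $A'=0$, so $U'(t)=U_{P_2}(t)$. A one-line computation using $U_{P_2}(t)=\cos(t)I+i\sin(t)A(P_2)$ shows that $P_2$ has vertex PST between its two vertices at time $\pi/2$, so Corollary \ref{mainres}, implication $(1)\Rightarrow (2)$, delivers pair PST in $G_H$ between $\frac{1}{\sqrt{2}}(\e_1-\e_5)$ and $\frac{1}{\sqrt{2}}(\e_2-\e_4)$ at time $\pi/2$.

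It remains to exhibit, for each $k\geq 3$, infinitely many choices of $H$ from each of the three graph classes such that $G_H$ is connected with maximum degree $k$. The vertex degrees of $G_H$ are bounded by $2+\deg_H(v)$ at vertex $3$ and by $\max\{2,\Delta(H)\}$ elsewhere, so it suffices to take $H$ with $\Delta(H)=k$ and $\deg_H(v)\leq k-2$. For each $n\geq 1$, let $H_n$ be the tree obtained by attaching a path of length $n$ to a leaf of the star $K_{1,k}$, and let $v$ be the free endpoint of that attached path: then $\Delta(H_n)=k$, $\deg_{H_n}(v)=1\leq k-2$, and the $H_n$ are pairwise non-isomorphic. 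Since each $H_n$ is a tree (hence also a planar graph and a graph), this single infinite family proves all three parts of the theorem. The only nontrivial checkpoint is the verification of the twin condition in $G_H$, which reduces to the local observation about vertex $3$; everything else is bookkeeping.
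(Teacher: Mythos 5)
Your proposal is correct and follows essentially the same route as the paper: the authors also take the graph on the left of Figure \ref{fi} with $P_2$ as twin subgraphs, note that $P_2$ has vertex PST at $\frac{\pi}{2}$, and invoke Corollary \ref{mainres}(2) to get pair PST between $\frac{1}{\sqrt{2}}(\e_1-\e_5)$ and $\frac{1}{\sqrt{2}}(\e_2-\e_4)$ for any graph $H$ identified at vertex $3$. Your explicit family $H_n$ and the degree bookkeeping at vertex $3$ are welcome details that the paper leaves implicit.
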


Theorem \ref{inftreesmaxdeg3} complements the fact that for every positive integer $k$, there are only finitely many connected graphs with maximum degree $k$ admitting plus state transfer \cite[Corollary 3.5]{kim}. If $k=3$, then Theorem \ref{inftreesmaxdeg3} recovers a result of Pal and Mohapatra \cite{pal10}.

\begin{cor}
\label{inftreesmaxdeg31}
There are infinitely many unweighted trees with maximum degree three admitting pair state transfer at $\frac{\pi}{2}$.
\end{cor}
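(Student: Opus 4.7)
The plan is to observe that Corollary \ref{inftreesmaxdeg31} is the $k=3$ tree case of Theorem \ref{inftreesmaxdeg3}, and then to exhibit an explicit infinite family to make the conclusion concrete.

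First I would take the graph $G_0$ on the left of Figure \ref{fi} and interpret it as a tree: it is the path on vertices $1,2,3,4,5$ (with edges $\{1,2\}$, $\{2,3\}$, $\{3,4\}$, $\{4,5\}$) together with a subgraph $H$ attached at vertex $3$. Here $X_1=\{1,2\}$ and $X_2=\{5,4\}$ each induce a $P_2$, the map $f$ defined by $f(1)=5$, $f(2)=4$ is an isomorphism of $X_1$ onto $X_2$, there are no edges between $X_1$ and $X_2$, and the external vertex of $X_1$ (resp.\ $X_2$) is only adjacent to vertex $3$. Hence $X_1$ and $X_2$ are genuine (non-perturbed) twin subgraphs, so $A'=0$ in the notation of Theorem \ref{mainthm}.

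Next I would construct the family. For each positive integer $n$, let $H_n$ be any tree on $n$ vertices in which a distinguished root $r$ has degree at most $1$ in $H_n$ and every vertex has degree at most $3$; for instance, $H_n$ could be the path of length $n-1$ rooted at one endpoint. Let $T_n$ be obtained from $G_0$ by identifying vertex $3$ with $r$. Then $T_n$ is a tree, vertex $3$ has degree $2+1=3$, vertices $2$ and $4$ have degree $2$, vertices $1$ and $5$ have degree $1$, and every vertex of $H_n$ keeps degree at most $3$; thus $T_n$ has maximum degree three. Since $|V(T_n)|$ is strictly increasing in $n$, the $T_n$ are pairwise non-isomorphic, yielding an infinite family.

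Finally I would apply Corollary \ref{mainres}. The graph $P_2=X_1$ admits perfect state transfer between its two vertices at time $\tau=\frac{\pi}{2}$ (relative to $A(X_1)-A'=A(P_2)$), so item $1$ of Corollary \ref{mainres} holds with $a=1$, $b=2$, $\hat{f}(1)=5$, $\hat{f}(2)=4$. By the equivalence of items $1$ and $2$ of Corollary \ref{mainres}, each $T_n$ admits perfect state transfer between the pair states $\frac{1}{\sqrt{2}}(\e_1-\e_5)$ and $\frac{1}{\sqrt{2}}(\e_2-\e_4)$ at $\frac{\pi}{2}$. This establishes the corollary. There is no real obstacle here: the work has already been done in proving Theorem \ref{inftreesmaxdeg3}, and the only thing to check explicitly is the maximum-degree bookkeeping at vertex $3$, which is handled by requiring the attachment point of $H_n$ to have degree at most one in $H_n$.
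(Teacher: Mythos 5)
Your proposal is correct and follows essentially the same route as the paper: the authors also take the graph on the left of Figure \ref{fi} with $X_1=X_2=P_2$ as twin subgraphs (so $A'=0$), use PST in $P_2$ at $\frac{\pi}{2}$, and apply Corollary \ref{mainres}(2) with $H$ an arbitrary tree attached at vertex $3$, deducing the corollary as the $k=3$ tree case of Theorem \ref{inftreesmaxdeg3}. Your explicit degree bookkeeping at the attachment vertex is a minor detail the paper leaves implicit, but it does not change the argument.
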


\begin{figure}
\begin{multicols}{2}
\begin{center}
\begin{tikzpicture}[scale=.5,auto=left]
                       \tikzstyle{every node}=[circle, thick, fill=white, scale=0.6]
                       
		        \node[draw] (1) at (1.5,0) {$3$};		        
		        \node[draw,minimum size=0.7cm, inner sep=0 pt] (2) at (-2.8, 1) {$1$};
		        \node[draw,minimum size=0.7cm, inner sep=0 pt] (3) at (-1.3, 2) {$2$};
		        \node[draw,minimum size=0.7cm, inner sep=0 pt] (4) at (-2.8, -1) {$5$};
		        \node[draw,minimum size=0.7cm, inner sep=0 pt] (5) at (-1.3, -2) {$4$};		       
	
				\node at (-4.5, 2) {$X_1$};
				\node at (-4.5,-2) {$X_2$};
				\node at (6.3,0.5) {$H$};
				
				\draw[dotted] (-2,1.5) ellipse (2 cm and 1.3 cm);
				\draw[dotted] (-2,-1.5) ellipse (2 cm and 1.3 cm);
				\draw[dotted] (3.2,0.4) circle (2.5 cm);
								
				\draw [thick, black!70] (1)--(3)--(2);
				\draw [thick, black!70] (5)--(4);
                \draw [thick, dashed, black!70] (1)--(5);

				\draw[thick, black!70] (1)..controls (3,4) and (8,0)..(1);

				\end{tikzpicture}
                
\begin{tikzpicture}[scale=.5,auto=left]
                       \tikzstyle{every node}=[circle, thick, fill=white, scale=0.6]
                       
		        \node[draw] (1) at (1.5,0) {$3$};		        
		        \node[draw,minimum size=0.7cm, inner sep=0 pt] (2) at (-2.8, 1) {$1$};
		        \node[draw,minimum size=0.7cm, inner sep=0 pt] (3) at (-1.3, 2) {$2$};
		        \node[draw,minimum size=0.7cm, inner sep=0 pt] (4) at (-2.8, -1) {$5$};
		        \node[draw,minimum size=0.7cm, inner sep=0 pt] (5) at (-1.3, -2) {$4$};		       
	
				\node at (-4.5, 2) {$X_1$};
				\node at (-4.5,-2) {$X_2$};
				\node at (6.3,0.5) {$H$};
				
				\draw[dotted] (-2,1.5) ellipse (2 cm and 1.3 cm);
				\draw[dotted] (-2,-1.5) ellipse (2 cm and 1.3 cm);
				\draw[dotted] (3.2,0.4) circle (2.5 cm);
								
				\draw [thick, black!70] (1)--(3)--(2);
				\draw [thick, dashed, black!70] (5)--(4);
                \draw [thick, black!70] (1)--(5);

				\draw[thick, black!70] (1)..controls (3,4) and (8,0)..(1);

				\end{tikzpicture}
				
\end{center}
\end{multicols}
\caption{\label{fig1} Signed versions of a graph with $P_2$ as twin subgraphs}
\end{figure}
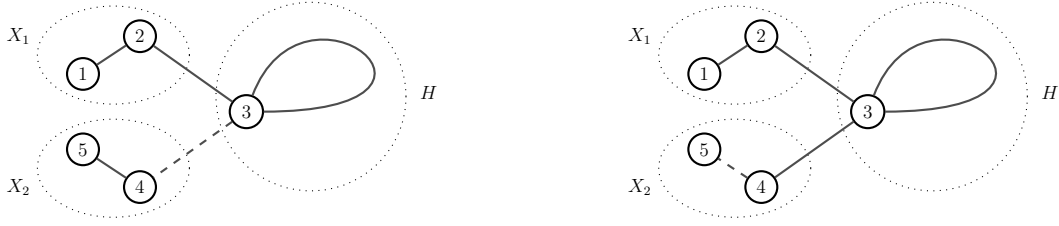

Now, if we assign the edge $\{3,4\}$ in the graph on the left of Figure \ref{fi} a negative weight, we obtain the graph on the left of Figure \ref{fig1}, which has perfect state transfer at $\frac{\pi}{2}$ between $\frac{1}{\sqrt2}(\e_1+\e_5)$ and $\frac{1}{\sqrt2}(\e_2+\e_4)$ by Corollary \ref{mainres}(3). This yields the following results analogous to Theorem \ref{inftreesmaxdeg3} and Corollary \ref{inftreesmaxdeg31}, respectively. 

\begin{thm}
\label{infsignedplus}
For each $k\geq 3$, there are infinitely many connected unweighted signed graphs $\tilde{G}$ such that the underlying graphs have maximum degree $k$, $\tilde{G}$ has exactly one negative edge weight, and plus state transfer in $\tilde{G}$ at $\frac{\pi}{2}$.
\end{thm}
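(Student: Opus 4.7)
The plan is to mirror the construction used for Theorem \ref{inftreesmaxdeg3} and convert pair PST into plus PST via the sign-flipping machinery of Corollary \ref{mainres}(3). Concretely, I take the same family of graphs $G$ (each obtained by attaching an arbitrary suitable graph $H$ at vertex $3$ of the twin-subgraph configuration depicted on the left of Figure \ref{fi}, where $X_1 = \{1,2\}$ and $X_2 = \{4,5\}$ are twin $P_2$'s under the isomorphism $f(1) = 5$, $f(2) = 4$), and produce a signed version $\tilde{G}_D$, where $D$ is the diagonal $\pm 1$ matrix with $D_4 = D_5 = -1$ and $D_i = 1$ for all other vertices. This choice satisfies $D_1 = -D_5$ and $D_2 = -D_4$, so Corollary \ref{mainres}(3) applied with $a = 1$, $b = 2$, together with the well-known vertex PST of $P_2$ at $\pi/2$, immediately yields plus PST in $\tilde{G}_D$ between $\frac{1}{\sqrt{2}}(\e_1 + \e_5)$ and $\frac{1}{\sqrt{2}}(\e_2 + \e_4)$ at time $\pi/2$.

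Next, I check that $\tilde{G}_D$ has exactly one negative edge. The sign of an edge $\{u,v\}$ flips if and only if $D_u D_v = -1$, so a direct edge-by-edge sweep of $G$ suffices. The $P_2$ edges $\{1,2\}$ and $\{4,5\}$ each have matching signs at their endpoints and are unaffected, and every edge of $H$ lies entirely in the block where $D = 1$. The only edge whose endpoints receive opposite signs is the bridging edge $\{3,4\}$, which therefore becomes the unique negative edge of $\tilde{G}_D$.

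To conclude, I generate infinitely many such $\tilde{G}_D$ by varying $H$. For fixed $k \geq 3$, let $H$ range over connected unweighted graphs of maximum degree $k$ in which vertex $3$ has degree at most $k-2$, so that $\deg_G(3) = \deg_H(3) + 2 \leq k$ and $G$ inherits maximum degree $k$ from $H$. Since $k - 2 \geq 1$, it is enough to choose vertex $3$ to be a leaf of $H$; infinitely many pairwise non-isomorphic such $H$ clearly exist (for example, take $H$ to be any sufficiently large tree of maximum degree $k$ with a designated leaf, or any connected graph of maximum degree $k$ with a pendant vertex used as vertex $3$). The main subtlety, rather than a genuine obstacle, is choosing a single $D$ that simultaneously meets the sign constraints of Corollary \ref{mainres}(3) and flips exactly one edge; once the choice $D_4 = D_5 = -1$ is fixed, everything else follows from machinery already established in Sections \ref{sec:sgned} and \ref{sec:prpl}.
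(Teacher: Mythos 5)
Your proposal is correct and takes essentially the same route as the paper: the authors likewise negate the single bridging edge $\{3,4\}$ of the graph on the left of Figure \ref{fi} (yielding the left graph of Figure \ref{fig1}) and invoke Corollary \ref{mainres}(3) to obtain plus PST between $\frac{1}{\sqrt2}(\e_1+\e_5)$ and $\frac{1}{\sqrt2}(\e_2+\e_4)$ at $\frac{\pi}{2}$, with infinitude coming from varying $H$. Your explicit verification of the choice of $D$, the single negated edge, and the degree bookkeeping simply fills in details the paper leaves implicit.
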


\begin{cor}
\label{treemaxdeg3}
There are infinitely many unweighted signed trees $\tilde{T}$ such that the underlying graphs have maximum degree three, $\tilde{T}$ has exactly one negative edge weight and plus state transfer in $\tilde{T}$ at $\frac{\pi}{2}$.
\end{cor}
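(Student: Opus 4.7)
The plan is to leverage the signed gadget displayed on the left of Figure \ref{fig1} together with Corollary \ref{mainres}(3), and to generate an infinite family by attaching an arbitrary subcubic tree at vertex $3$.

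First, I would fix the gadget: the signed path on vertices $1,2,3,4,5$ with edges $\{1,2\}$, $\{2,3\}$, $\{3,4\}$, $\{4,5\}$, where every edge has weight $+1$ except $\{3,4\}$ which has weight $-1$. Let $G$ be the underlying unweighted path, and let $X_1$ and $X_2$ be the induced subgraphs on $\{1,2\}$ and $\{5,4\}$ respectively, with isomorphism $f(1)=5$ and $f(2)=4$. These are twin subgraphs of $G$: there are no edges between $X_1$ and $X_2$, and the external vertex $3$ is adjacent to $2$ and to $4=f(2)$ with identical weights. By Remark \ref{A'}, $A'=\mathbf{0}$, so the governing transition matrix is that of $A(X_1)=A(P_2)$. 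Since $P_2$ admits vertex PST between its two vertices at time $\pi/2$, Corollary \ref{mainres}(3) applied with any diagonal $\pm 1$ matrix $D$ satisfying $D_1=-D_5$, $D_2=-D_4$, and $D_v=+1$ at every remaining vertex yields plus state transfer between $\frac{1}{\sqrt 2}(\e_1+\e_5)$ and $\frac{1}{\sqrt 2}(\e_2+\e_4)$ in $\tilde{G}_D$ at time $\pi/2$. Choosing $D_1=D_2=D_3=+1$ forces $D_4=D_5=-1$, and a direct computation of $DA(G)D$ confirms that the only sign-flipped edge is $\{3,4\}$.

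Next, I would generate the infinite family by identifying vertex $3$ with a leaf $v$ of any tree $H$ whose maximum degree is at most three. Concretely, taking $H=P_n$ for each $n\geq 1$ produces an infinite collection of pairwise non-isomorphic signed trees $\tilde{T}_n$. Each $\tilde{T}_n$ is a tree (attaching a tree at a leaf of another tree preserves the tree property), has exactly one negative edge (namely $\{3,4\}$), and has maximum degree equal to $3$, attained at vertex $3$, which acquires a third neighbour from $H$. The twin subgraph structure of $X_1$ and $X_2$ is preserved because the only vertex of $\tilde{T}_n$ adjacent to $X_1\cup X_2$ from the outside is still $3$, and the matching-weight condition continues to hold. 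Hence Corollary \ref{mainres}(3) applies verbatim to each $\tilde{T}_n$, delivering plus PST at $\pi/2$.

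The only real obstacle is keeping the tree and maximum-degree-three conditions in play simultaneously; this is handled by insisting that the attachment point $v$ be a leaf of a subcubic tree $H$, which ensures that vertex $3$ ends up with degree exactly $2+1=3$ and that the remainder of $\tilde{T}_n$ inherits its degree bound from $H$. Everything else is a direct invocation of the gadget and the already-established Corollary \ref{mainres}(3).
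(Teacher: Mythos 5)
Your proof is correct and follows essentially the same route as the paper: the signed $P_5$ gadget with the single negative edge $\{3,4\}$, twin subgraphs $X_1=\{1,2\}$ and $X_2=\{5,4\}$ with $A'=\mathbf{0}$, and Corollary \ref{mainres}(3) applied with $D_1=D_2=D_3=1$ and $D_4=D_5=-1$, followed by attaching a subcubic tree at vertex $3$. The only cosmetic slip is that $H=P_1$ adds no neighbour to vertex $3$ (leaving maximum degree two), so the family should start at $n\geq 2$; this does not affect the conclusion.
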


Applying Corollary \ref{mainres}(4) and Theorem \ref{inftreesmaxdeg3} to the graph on the right Figure \ref{fig1} yields the following result.

\begin{thm}
\label{ak}
For each $k\geq 3$, there are infinitely many connected unweighted signed graphs $\tilde{G}$ such that the underlying graphs have maximum degree $k$ and perfect state transfer occurs between a plus state and a pair state in $\tilde{G}$ at $\frac{\pi}{2}$.
\end{thm}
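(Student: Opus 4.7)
The plan is to apply Corollary \ref{mainres}(4) to the graph $G$ on the left of Figure \ref{fi}, where $X_1=\{1,2\}$ and $X_2=\{5,4\}$ are twin subgraphs isomorphic to $P_2$ with $\hat{f}(1)=5$ and $\hat{f}(2)=4$, and an arbitrary graph $H$ is attached at vertex $3$. Since $G$ has no edges between $X_1$ and $X_2$, Remark \ref{A'} gives $A'=0$, so the hypothesis of Corollary \ref{mainres}(1) reduces to the well-known fact that $P_2$ admits vertex PST between its two vertices at $\frac{\pi}{2}$. The equivalence of items 1 and 4 in Corollary \ref{mainres} will then deliver plus-to-pair PST at $\frac{\pi}{2}$ in the signed graph $\tilde{G}_D$ for an appropriately chosen diagonal matrix $D$.

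To produce the specific signed graph on the right of Figure \ref{fig1}, I would take $D$ to be the diagonal matrix of $\pm 1$'s with $D_5=-1$ and every other entry equal to $+1$, so that $D_1=-D_{\hat{f}(1)}$ and $D_2=D_{\hat{f}(2)}$, which is the sign pattern prescribed by Corollary \ref{mainres}(4) with $a=1$ and $b=2$. Because vertex $5$ is adjacent only to vertex $4$ in $G$, the conjugation $DA(G)D$ flips exactly the sign of the entries corresponding to the edge $\{4,5\}$, so $\tilde{G}_D$ is precisely the signed graph depicted on the right of Figure \ref{fig1}. Corollary \ref{mainres}(4) then yields PST in $\tilde{G}_D$ between the plus state $\frac{1}{\sqrt{2}}(\e_1+\e_5)$ and the pair state $\frac{1}{\sqrt{2}}(\e_2-\e_4)$ at $\frac{\pi}{2}$.

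To upgrade this single example to an infinite family of signed graphs whose underlying graphs have maximum degree $k$, I would reuse the construction in the proof of Theorem \ref{inftreesmaxdeg3}: let $H$ range over the infinite collection of connected graphs of maximum degree at most $k$ whose attachment vertex at $3$ has degree at most $k-2$, so that vertex $3$ has total degree at most $k$ in $G$. Since the sign change by $D$ does not affect the underlying unweighted graph, the maximum degree of $\tilde{G}_D$ equals that of $G$, namely $k$, and varying $H$ produces infinitely many pairwise non-isomorphic signed graphs with the required PST. I do not anticipate any substantive obstacle in this argument: Corollary \ref{mainres}(4) provides the plus-to-pair correspondence, Theorem \ref{inftreesmaxdeg3} provides the infinite family of admissible underlying graphs, and the only care needed is in verifying that the chosen $D$ yields a signed graph of the stated form with transported endpoints of the stated types.
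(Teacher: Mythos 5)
Your proposal is correct and follows essentially the same route as the paper, which proves Theorem \ref{ak} by applying Corollary \ref{mainres}(4) together with the infinite family from Theorem \ref{inftreesmaxdeg3} to the graph on the right of Figure \ref{fig1}; your explicit choice $D_5=-1$ and the resulting transfer between $\frac{1}{\sqrt{2}}(\e_1+\e_5)$ and $\frac{1}{\sqrt{2}}(\e_2-\e_4)$ at $\frac{\pi}{2}$ match the paper's construction. The only minor point is that to get maximum degree exactly $k$ (rather than at most $k$) you should insist that $H$ itself contains a vertex of degree $k$, which is easily arranged.
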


\begin{cor}\label{3cor9}
There are infinitely many unweighted signed trees $\tilde{T}$ such that the underlying trees have maximum degree three, $\tilde{T}$ has exactly one negative edge weight and perfect state transfer occurs between a plus state and a pair state in $\tilde{T}$ at $\frac{\pi}{2}$.
\end{cor}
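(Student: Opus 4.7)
The plan is to combine the tree construction underlying Corollary \ref{inftreesmaxdeg31} with the sign-flipping mechanism of Corollary \ref{mainres}(4). I would start from the graph $G$ pictured on the left of Figure \ref{fi}: take the path $P_5$ on vertices $1,2,3,4,5$ (with edges $\{1,2\},\{2,3\},\{3,4\},\{4,5\}$) and identify vertex $3$ with a designated vertex, of degree at most one in the auxiliary graph, of an unweighted tree $H$ of maximum degree three. Then $X_1=\{1,2\}$ and $X_2=\{5,4\}$ are twin subgraphs of $G$ under $\hat{f}(1)=5$ and $\hat{f}(2)=4$; no edges run directly between $X_1$ and $X_2$, so $A'=0$ and $U'(t)=U_{P_2}(t)$, which admits perfect state transfer between its two vertices at $\tau=\pi/2$.

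Next I would verify the structural properties of the resulting graph. Gluing two trees at a single vertex produces a tree, so $G$ is a tree. Vertex $3$ has degree at most $2+1=3$ in $G$, and every other vertex has degree at most three (inherited either from $H$ or from the base path), so $G$ has maximum degree three. Letting $H$ range over an infinite family of such trees -- for instance, $H=P_n$ attached at one endpoint for each $n\geq 1$ -- yields infinitely many pairwise non-isomorphic such trees $G$.

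Finally I would choose the diagonal sign matrix $D$ with $\pm 1$ entries defined by $D_5=-1$ and $D_u=+1$ for every other vertex $u$. This satisfies the hypothesis of Corollary \ref{mainres}(4) with $a=1$ and $b=2$, since $D_1=-D_5$ and $D_2=D_4$. Because vertex $5$ has a single neighbour in $G$, namely vertex $4$, conjugation by $D$ flips exactly one edge, $\{4,5\}$, so $\tilde{G}_D$ is an unweighted signed graph with exactly one negative edge. Corollary \ref{mainres}(4) then delivers perfect state transfer in $\tilde{G}_D$ between the plus state $\frac{1}{\sqrt{2}}(\e_1+\e_5)$ and the pair state $\frac{1}{\sqrt{2}}(\e_2-\e_4)$ at time $\pi/2$, as desired. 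The only delicate point I anticipate is arranging that the single non-trivial entry of $D$ lies on a vertex whose unique neighbour carries $D_u=+1$, so that exactly one edge flips sign rather than several; this is the reason for placing the negative $D$-entry at the leaf $5$ of the underlying path rather than at any interior vertex of $G$.
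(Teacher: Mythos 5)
Your proposal is correct and follows essentially the same route as the paper: it is exactly the construction behind Figure \ref{fig1} (right) --- a $P_5$ whose middle vertex is identified with a low-degree vertex of a tree $H$ of maximum degree three, with the sign change $D_5=-1$ negating only the pendant edge $\{4,5\}$ --- fed into Corollary \ref{mainres}(4) to get perfect state transfer between $\frac{1}{\sqrt2}(\e_1+\e_5)$ and $\frac{1}{\sqrt2}(\e_2-\e_4)$ at $\frac{\pi}{2}$. The only nitpick is that to guarantee the underlying tree has maximum degree exactly three you should take $H=P_n$ with $n\ge 2$ (for $n=1$ the graph is just $P_5$, of maximum degree two); this does not affect the conclusion.
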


\begin{rem}
Since $H$ in both figures in Figure \ref{fig1} may be taken to be finite or infinite, Theorems \ref{inftreesmaxdeg3}-\ref{ak} and Corollaries \ref{inftreesmaxdeg31}-\ref{3cor9} hold for finite graphs and bounded infinite graphs.
\end{rem}

For the class of finite graphs, we obtain the following results.

\begin{thm}
\label{almostallplanar}
Almost all labelled connected unweighted planar finite graphs admit pair state transfer at $\frac{\pi}{2}$. Moreover, almost all labelled connected unweighted planar finite graphs can be assigned a single negative edge weight so that the resulting signed graphs have plus state transfer, or perfect state transfer between a pair state and a plus state, both at $\frac{\pi}{2}$.
\end{thm}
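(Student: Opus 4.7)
The plan is to reduce the theorem to a single combinatorial fact about random labelled connected planar graphs and then apply the machinery of Section \ref{sec:prpl}. Concretely, I aim to show that almost all such graphs contain, as an induced subgraph, a \emph{pendant induced $P_5$}: an induced path $v_1 v_2 v_3 v_4 v_5$ with $\deg_G(v_1) = \deg_G(v_5) = 1$, $\deg_G(v_2) = \deg_G(v_4) = 2$, and $v_3$ the unique vertex of the path that has further neighbours in $G$. This is exactly the configuration of Figure \ref{fi}, with $v_3$ playing the role of vertex $3$ and the remainder of $G$ playing the role of $H$.

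Granting this combinatorial fact, set $X_1 = \{v_1, v_2\}$ and $X_2 = \{v_4, v_5\}$ with isomorphism $f(v_1) = v_5$, $f(v_2) = v_4$. These are twin subgraphs of $G$ (no edges run between them, hence $A' = \mathbf{0}$). Since $P_2$ admits vertex PST between its two vertices at $\frac{\pi}{2}$, Corollary \ref{mainres}(2) yields pair state transfer in $G$ between $\frac{1}{\sqrt{2}}(\e_{v_1} - \e_{v_5})$ and $\frac{1}{\sqrt{2}}(\e_{v_2} - \e_{v_4})$ at $\frac{\pi}{2}$, establishing the first assertion. For the second, choosing $D$ to be the diagonal $\pm 1$-matrix with $D_{v_4} = D_{v_5} = -1$ and $D_j = 1$ elsewhere flips the sign of exactly the edge $\{v_3, v_4\}$, and Corollary \ref{mainres}(3) gives plus PST in $\tilde{G}_D$ at $\frac{\pi}{2}$; taking instead $D$ with $D_{v_5} = -1$ and $D_j = 1$ elsewhere flips the sign of exactly the edge $\{v_4, v_5\}$, and Corollary \ref{mainres}(4) gives PST between a plus state and a pair state in $\tilde{G}_D$ at $\frac{\pi}{2}$. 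In both cases the resulting signed graph differs from $G$ by exactly one negative edge weight, as required.

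The main obstacle is the combinatorial claim. I would invoke known ``appendage'' results on random labelled connected planar graphs, due in essence to McDiarmid, Steger and Welsh, which assert that for any fixed rooted planar graph $(H, r)$ satisfying mild attachability hypotheses, a uniformly random labelled connected planar graph on $n$ vertices contains $\Theta(n)$ disjoint pendant copies of $H$ (each attached to the rest of the graph only through a vertex playing the role of $r$) with probability tending to $1$ as $n \to \infty$. Applying this with $H = P_5$ and $r$ its middle vertex yields, with high probability, at least one pendant induced $P_5$ of the required form. The most delicate part of the argument is verifying that the pendant/attachment condition of the appendage theorem encodes the precise degree constraints $\deg_G(v_1) = \deg_G(v_5) = 1$ and $\deg_G(v_2) = \deg_G(v_4) = 2$ (so that $v_1, v_2, v_4, v_5$ have no stray edges back into $H$) and invoking the appendage statement in the exact form needed; once this is in place, the transfer conclusions follow immediately from Corollary \ref{mainres}.
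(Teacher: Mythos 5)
Your proposal is correct and follows essentially the same route as the paper: both reduce the theorem to the fact that a uniformly random labelled connected planar graph contains, with high probability, a pendant $P_5$ rooted at its middle vertex (the paper adapts Theorem 4.3 of the cited Ciardo reference, which itself rests on the McDiarmid--Steger--Welsh ``appearances'' machinery you invoke), and then apply Corollary \ref{mainres}(2--4) to the resulting twin $P_2$ subgraphs exactly as in Figures \ref{fi} and \ref{fig1}. Your explicit choices of $D$ and the verification that each flip produces exactly one negative edge are correct and merely spell out what the paper summarizes as ``our observations at the start of the section.''
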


\begin{proof}
Let $\mathcal{CP}_n$ denote the class of labelled connected unweighted planar graphs on $n$ vertices. Adapting the proof of Theorem 4.3 in \cite{ciardo2020braess} by replacing $H$ as $P_5$ rooted at the vertex adjacent to two degree two vertices (in lieu of a $P_3$ rooted at a central vertex), we conclude that a labelled connected planar graph on $n$ vertices (sampled from $\mathcal{CP}_n$ uniformly at random) contains $P_5$ as a rooted branch – actually linearly many – with probability approaching 1 as $n$ approaches infinity. Therefore, almost all labelled connected planar graphs have the same form as the graph on the left of Figure \ref{fi}, where $H$ is connected planar graph. Combining this with our observations at the start of the section and Corollary \ref{mainres}(2-4) yields the desired result.
\end{proof}

An analogous result also holds for trees. 

\begin{thm}
\label{almostalltrees}
Almost all unweighted finite trees admit pair state transfer at $\frac{\pi}{2}$. Moreover, almost all unweighted trees can be assigned a single negative edge weight so that the resulting signed trees have plus state transfer, or perfect state transfer between a pair state and a plus state, both at $\frac{\pi}{2}$.
\end{thm}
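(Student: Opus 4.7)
The plan is to mimic the proof of Theorem \ref{almostallplanar}, replacing the planarity input with a classical result on random trees due to Schwenk. The structural feature we need, as dictated by the graph on the left of Figure \ref{fi}, is that the tree $T$ contains an internal vertex $v$ to which two vertex-disjoint limbs, each isomorphic to the rooted tree $P_2$ (a pendant edge ending in a leaf), are attached, and such that $v$ has at least one further neighbor in the remainder $H$ of $T$. Equivalently, $T$ must contain the rooted tree $P_5$, with the central vertex as its root, as a limb.

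To secure this, I would appeal to Schwenk's theorem, which states that for any fixed rooted tree $T^\ast$, the proportion of $n$-vertex trees (labelled or unlabelled) that contain $T^\ast$ as a limb tends to $1$ as $n \to \infty$; in fact, the expected number of such limbs grows linearly in $n$. Applying this to $T^\ast = P_5$ rooted at its central vertex gives exactly the desired configuration. In the labelled setting, a direct first-moment computation via Pr\"ufer sequences also suffices: the expected number of vertices in a uniformly random labelled tree on $n$ vertices carrying two pendant $P_2$ limbs and at least one other neighbor is linear in $n$, and a standard second-moment concentration argument implies that such a vertex exists with probability tending to $1$.

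Once the required limb is present in $T$, the tree takes precisely the form of the graph on the left of Figure \ref{fi}, with $H$ playing the role of the rest of $T$ (which is itself a tree, so the overall graph is a tree). The observations at the beginning of Section \ref{sec:inf} together with Corollary \ref{mainres}(2) then immediately produce pair state transfer in $T$ at $\frac{\pi}{2}$. For the signed versions, assigning a single negative weight to the edge $\{3,4\}$ (respectively, the edge $\{4,5\}$) inside the limb as in Figure \ref{fig1}, and applying Corollary \ref{mainres}(3) (respectively, Corollary \ref{mainres}(4)), yields plus state transfer (respectively, perfect state transfer between a plus state and a pair state) at $\frac{\pi}{2}$ in the resulting signed tree.

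The main obstacle is citing or reproving the precise form of Schwenk's theorem appropriate to the ensemble of unweighted finite trees under consideration and to the specific rooted tree $P_5$; this is a well-known but nontrivial input. Once that is in hand, every remaining step is a direct application of the structural results already established in Sections \ref{2s_3} and \ref{sec:prpl}, so the rest of the argument is routine.
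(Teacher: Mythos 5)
Your proposal is correct and follows essentially the same route as the paper: it invokes Schwenk's limb theorem for $P_5$ rooted at its centre to show that almost all trees contain the configuration of the left graph in Figure \ref{fi}, and then applies Corollary \ref{mainres}(2--4) to obtain pair state transfer, plus state transfer, and pair--plus state transfer at $\frac{\pi}{2}$. The Pr\"ufer-sequence first/second-moment remark is an unnecessary but harmless alternative justification for the labelled case.
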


\begin{proof}
In \cite[Theorem 7]{schwenk1973almost}, Schwenk showed that for any rooted tree $L$, almost all trees contain $L$ as a limb. Taking $L$ as $P_5$ rooted at the vertex adjacent to two degree two vertices, we get that almost all trees have the same form as the graph on the left of Figure \ref{fi}, where $H$ is a tree. Combining this with our observations at the start of the section and Corollary \ref{mainres}(2-4) yields the result.   
\end{proof}

\begin{rem}
\label{rem}
We may also consider graphs with twin subgraphs isomorphic to $P_3$ (in lieu of $P_2$) in Figure \ref{fi}. Since $P_3$ has PST between end vertices at $\frac{\pi}{\sqrt{2}}$. Applying the same argument to the graphs in Figure \ref{figg}, Theorems \ref{inftreesmaxdeg3}-\ref{almostalltrees} and Corollaries \ref{inftreesmaxdeg31}-\ref{3cor9} also hold at $\frac{\pi}{\sqrt{2}}$.
\end{rem}

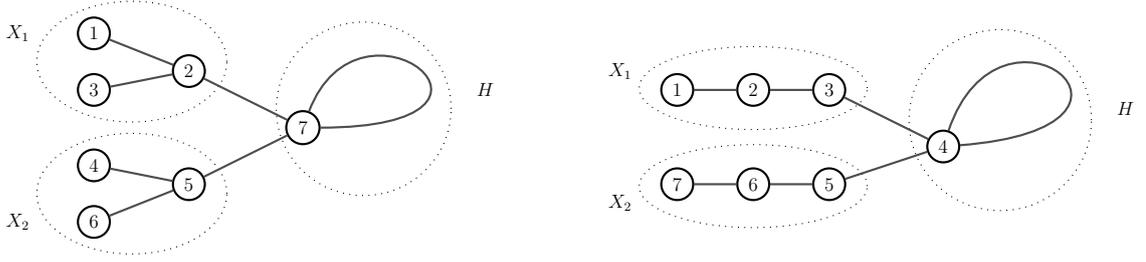
\begin{figure}
\begin{multicols}{2}
\begin{center}
\begin{tikzpicture}[scale=.5,auto=left]
                       \tikzstyle{every node}=[circle, thick, fill=white, scale=0.6]
                        \node[draw,minimum size=0.7cm, inner sep=0 pt] (1) at (-4, 2.5) {$1$};
          \node[draw,minimum size=0.7cm, inner sep=0 pt] (2) at (-1.5, 1.5) {$2$};
          \node[draw,minimum size=0.7cm, inner sep=0 pt] (3) at (-4, 1) {$3$};
          \node[draw,minimum size=0.7cm, inner sep=0 pt] (4) at (-4, -1) {$4$};
          \node[draw,minimum size=0.7cm, inner sep=0 pt] (5) at (-1.5, -1.5) {$5$};
          \node[draw,minimum size=0.7cm, inner sep=0 pt] (6) at (-4, -2.5) {$6$}; 
          \node[draw] (7) at (1.5,0) {$7$};
	\node at (-6, 2.5) {$X_1$};
				\node at (-6,-2.5) {$X_2$};
				\node at (6.3,1) {$H$};
				\draw[dotted] (-3,1.75) ellipse (2.5 cm and 1.6cm);
				\draw[dotted] (-3,-1.75) ellipse (2.5 cm and 1.6cm);
				\draw[dotted] (3.1,0.5) circle (2.3 cm);
				\draw [thick, black!70] (1)--(2)--(3);
                \draw [thick, black!70] (2)--(7)--(5)--(4);
				\draw [thick, black!70] (5)--(6);
                \draw[thick, black!70] (7)..controls (3,4) and (8,0)..(7);
\end{tikzpicture}
            \begin{tikzpicture}[scale=.5,auto=left]
                       \tikzstyle{every node}=[circle, thick, fill=white, scale=0.6]
                       \node[draw,minimum size=0.7cm, inner sep=0 pt] (1) at (-5.5, 1) {$1$};
                \node[draw,minimum size=0.7cm, inner sep=0 pt] (2) at (-3.5, 1) {$2$};
		        \node[draw,minimum size=0.7cm, inner sep=0 pt] (3) at (-1.5, 1) {$3$};
                \node[draw,minimum size=0.7cm, inner sep=0 pt] (4) at (1.5, -0.5) {$4$};	
		        \node[draw,minimum size=0.7cm, inner sep=0 pt] (5) at (-1.5, -1.5) {$5$};
		     \node[draw,minimum size=0.7cm, inner sep=0 pt] (6) at (-3.5, -1.5) {$6$};   	        
	\node[draw,minimum size=0.7cm, inner sep=0 pt] (7) at (-5.5, -1.5) {$7$};

                \node at (-7, 1.5) {$X_1$};
				\node at (-7,-2) {$X_2$};
				\node at (6.3,0.5) {$H$};
                \draw [dotted] (-3.5,1.05) ellipse (3cm and 1.1cm);
                \draw [dotted] (-3.5,-1.55) ellipse (3cm and 1.1cm);
				\draw[dotted] (3,0.2) circle (2.4cm);
						\draw [thick, black!70] (1)--(2)--(3)--(4)--(5)--(6)--(7);
				\draw[thick, black!70] (4)..controls (3,4) and (8,0)..(4);
\end{tikzpicture}
			\end{center}
\end{multicols}
\caption{\label{figg} Graphs having $P_3$ as twin subgraphs}
\end{figure}

\begin{cor}
\label{more}
For each $k\geq 2$, there are more connected unweighted finite graphs (resp., planar graphs, trees) with maximum degree $k$ admitting pair state transfer than those admitting vertex state transfer. This holds for all $k\geq 3$ if we replace `finite' by `infinite'.
\end{cor}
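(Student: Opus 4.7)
The plan is to split the argument into the two clauses of the corollary: the finite comparison for each $k \geq 2$, and the infinite comparison for each $k \geq 3$.

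For $k \geq 3$, both halves reduce to existing results in Section \ref{sec:inf}. Theorem \ref{inftreesmaxdeg3} already provides infinitely many connected unweighted finite graphs (respectively, planar graphs or trees) of maximum degree $k$ admitting pair PST, and the remark preceding Theorem \ref{almostallplanar} records that the same construction---taking $H$ in Figure \ref{fi} to be a bounded infinite graph of the appropriate class and maximum degree---produces infinitely many such bounded infinite examples. On the vertex-PST side, \cite[Corollary 6.2]{god2} caps the number of connected unweighted finite graphs of maximum degree $k$ admitting vertex PST by a finite constant, while \cite{godd} rules out vertex PST entirely for infinite graphs. In both regimes the pair-PST count strictly exceeds the vertex-PST count, and the planar and tree variants follow the same way by choosing $H$ within the corresponding class.

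The remaining case is $k = 2$ on finite graphs, where the connected candidates are only the paths $P_n$ and cycles $C_n$. Classical results identify the vertex-PST examples as exactly $P_2, P_3, C_4$ among graphs and planar graphs, and $P_2, P_3$ among trees. I would exhibit $P_5$ as a pair-PST graph of maximum degree $2$ that is not on the vertex-PST list: taking the reflection involution of $P_5$ (so that $X_1 = \{1, 2\}$ and $X_2 = \{5, 4\}$ are twin subgraphs glued through the fixed central vertex $3$, with $A' = 0$ since no edge of $P_5$ joins $X_1$ to $X_2$), Corollary \ref{mainres}(2) reduces pair PST between $\tfrac{1}{\sqrt{2}}(\e_1 - \e_5)$ and $\tfrac{1}{\sqrt{2}}(\e_2 - \e_4)$ in $P_5$ to vertex PST in the quotient $P_2$, which occurs at $\tfrac{\pi}{2}$. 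A short spectral check then shows that each of $P_2, P_3, C_4$ already admits pair PST trivially: $\tfrac{1}{\sqrt{2}}(\e_1 - \e_2)$ is an eigenvector of $A(P_2)$ for eigenvalue $-1$, and $\tfrac{1}{\sqrt{2}}(\e_1 - \e_3)$ lies in the nullspace of $A(P_3)$ and of $A(C_4)$, so each is periodic. Hence the pair-PST class of maximum degree $2$ contains $\{P_2, P_3, C_4, P_5\}$ (or $\{P_2, P_3, P_5\}$ for trees), strictly outnumbering the vertex-PST class.

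The only delicate point is the $k = 2$ case, because every construction of the Figure \ref{fi} type forces the central vertex to have degree at least three and so cannot reach maximum degree $2$. That is why the proof must fall back on a direct application of Corollary \ref{mainres} to $P_5$ rather than simply invoking Theorem \ref{inftreesmaxdeg3}.
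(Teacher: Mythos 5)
Your $k\geq 3$ argument, for both the finite and the infinite clause, is exactly the paper's: Theorem \ref{inftreesmaxdeg3} against \cite[Corollary 6.2]{god2} for finite graphs, and against the nonexistence of vertex PST on infinite graphs for the other clause. The $P_5$ computation via Corollary \ref{mainres}(2) is also sound (though the relevant Hamiltonian is $A(X_1)-A'=A(P_2)$ itself, not a quotient). The genuine gap is in how you populate the pair-PST class for $k=2$. You count $P_2$, and justify $P_3$ and $C_4$, on the grounds that a particular pair state is an eigenvector of the adjacency matrix and hence periodic. But ``admitting pair state transfer'' here means perfect state transfer between two linearly \emph{independent} pair states; a pair state that is an eigenvector is fixed up to phase for all time and is therefore never involved in transfer to a distinct state --- the paper says exactly this in the remark following Theorem \ref{corbu}. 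Consistently, the characterization the paper invokes (\cite[Theorem 6.5]{kim}) lists the maximum-degree-two graphs with pair PST as $P_3$, $P_5$, $P_7$, $C_4$, $C_6$, $C_8$, and $K_2=P_2$ is not among them. Once $P_2$ is struck from your list you are left with only three candidates ($P_3$, $C_4$, $P_5$) against the three vertex-PST graphs ($K_2$, $P_3$, $C_4$), so the strict inequality is not established; for trees you likewise get two against two.

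To repair the argument you must exhibit genuine pair PST, between independent pair states, in at least four maximum-degree-two graphs (resp.\ three trees). For instance, $P_3$ and $C_4$ do admit it, but for a different reason than the one you give: in $P_3$ one has $U_{P_3}(\frac{\pi}{\sqrt{2}})=-I+2E_0$, which sends $\frac{1}{\sqrt{2}}(\e_1-\e_2)$ to $-\frac{1}{\sqrt{2}}(\e_2-\e_3)$, and similarly $C_4$ has pair PST between $\frac{1}{\sqrt{2}}(\e_1-\e_2)$ and $\frac{1}{\sqrt{2}}(\e_3-\e_4)$ at $\frac{\pi}{2}$; you would then still need a fourth graph such as $P_7$, $C_6$ or $C_8$ (and a third tree, $P_7$). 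The paper sidesteps all of this by citing the full characterization: six pair-PST graphs versus three vertex-PST graphs.
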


\begin{proof}
For finite and infinite graphs (resp., planar graphs, trees), the case $k\geq 3$ follows from Theorem \ref{inftreesmaxdeg3}, \cite[Corollary 6.2]{god2} and the fact that vertex PST does not occur in infinite graphs \cite[Theorem 5.6]{godd}. For finite graphs, the case $k=2$ follows from the fact that there are only three such graphs with maximum degree two admitting vertex PST ($K_2$, $P_3$ and $C_4$), while there are six admitting pair state transfer ($P_3$, $P_5$, $P_7$, $C_4$, $C_6$, and $C_8$) (see \cite[Corollary 7.4]{godsil2025perfect} and \cite[Theorem 6.5]{kim}). 
The case $k=2$ for finite planar graphs (resp., trees) follows similarly.
\end{proof}


\section{Blow-ups}\label{sec:blow}

For every positive integer $n,$ consider $\mathbb{Z}_n=\cb{0, 1, 2, \ldots, n-1}.$ The blow-up of $n$ copies of $H$, denoted by $\overset{n}{\uplus}~H$, is the graph with vertex set $\mathbb{Z}_n\times V(H),$ and adjacency matrix
\begin{center}
$J_n\otimes A(H).$
\end{center}
Using the blow-up operation, we generate new examples of pair and plus PST.
    
\begin{thm}
\label{corbu}
Let $a$ and $b$ be vertices in $H$, $X_n:=\overset{n}{\uplus}~H$, and $D$ be a diagonal matrix of $\pm 1$'s. The following are equivalent.
\begin{enumerate}
\item Perfect state transfer occurs between vertices $a$ and $b$ in $H$.
\item Perfect state transfer occurs between $\sum_{j=0}^{n-1}\e_{(j,a)}$ and $\sum_{j=0}^{n-1}\e_{(j,b)}$ in $X_n$. In particular, perfect state transfer occurs between $\frac{1}{\sqrt2}(\e_{(0,a)}+\e_{(1,a)})$ and $\frac{1}{\sqrt2}(\e_{(0,b)}+\e_{(1,b)})$ in $X_2$.
\item The signed graph $\tilde{X_2}$ with $A(\tilde{X_2})=DA(X_2)D$ where $D_{(0,a)}=-D_{(1,a)}$ and $D_{(0,b)}=-D_{(1,b)}$ has perfect state transfer between $\frac{1}{\sqrt2}(\e_{(0,a)}-\e_{(1,a)})$ and $\frac{1}{\sqrt2}(\e_{(0,b)}-\e_{(1,b)})$.
\item The signed graph $\tilde{X_2}$ with $A(\tilde{X_2})=DA(X_2)D$ where $D_{(0,a)}=-D_{(1,a)}$ and $D_{(0,b)}=D_{(1,b)}$ has perfect state transfer between $\frac{1}{\sqrt2}(\e_{(0,a)}-\e_{(1,a)})$ and $\frac{1}{\sqrt2}(\e_{(0,b)}+\e_{(1,b)})$.
\end{enumerate}
Further, perfect state transfer occurs in 1 at $\tau$, in 2 at $\frac{\tau}{n}$ and in 3-4 at $\frac{\tau}{2}$.
\end{thm}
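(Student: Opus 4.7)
The plan is to recognize the blow-up $X_n$ as carrying a canonical equitable partition whose quotient is simply a rescaling of $H$, and then invoke the equivalence results Corollary \ref{maincor}(3) and Corollary \ref{mainres1} that were established in Sections \ref{2s_3} and \ref{sec:prpl}. Concretely, for each $u\in V(H)$, I group the $n$ copies $\{(j,u):j\in\mathbb{Z}_n\}$ into a single cell $V_u$. Because the adjacency matrix of $X_n$ is the Kronecker product $J_n\otimes A(H)$, the pair $((i,u),(j,v))$ is an edge whenever $\{u,v\}\in E(H)$, independently of $i,j$. Consequently, every vertex in $V_u$ has exactly $nA(H)_{u,v}$ neighbours in $V_v$, so the partition $\Pi=\{V_u:u\in V(H)\}$ is equitable with $c_{u,v}=nA(H)_{u,v}$. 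Symmetrising yields $A(X_n/\Pi)=nA(H)$, and therefore $U_{X_n/\Pi}(t)=U_H(nt)$ for all $t\in\mathbb{R}$.

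With this in hand, the equivalence $1\Leftrightarrow 2$ is a direct application of Corollary \ref{maincor}(3). Indeed, if $\check{\e}_u$ denotes the unit vector of the quotient indexed by $V_u$, then the normalized characteristic matrix $\mathcal{C}$ sends $\check{\e}_u\mapsto \tfrac{1}{\sqrt{n}}\sum_{j=0}^{n-1}\e_{(j,u)}$. Since $U_{X_n/\Pi}(\tau/n)=U_H(\tau)$, perfect state transfer between $a$ and $b$ in $H$ at time $\tau$ is equivalent to perfect state transfer between $\check{\e}_a$ and $\check{\e}_b$ in $X_n/\Pi$ at time $\tau/n$, which by Corollary \ref{maincor}(3) lifts to perfect state transfer between the corresponding normalized (and hence, up to scaling, unnormalized) sums $\sum_{j}\e_{(j,a)}$ and $\sum_j\e_{(j,b)}$ in $X_n$ at time $\tau/n$. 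Specialising to $n=2$ produces the plus-state formulation in the second sentence of 2.

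For parts 3 and 4, I observe that when $n=2$ the cells $V_a=\{(0,a),(1,a)\}$ and $V_b=\{(0,b),(1,b)\}$ have size two, which is exactly the hypothesis of Corollary \ref{mainres1}. Applying Corollary \ref{mainres1}(3) with the diagonal matrix $D$ satisfying $D_{(0,a)}=-D_{(1,a)}$ and $D_{(0,b)}=-D_{(1,b)}$ yields statement 3, while applying Corollary \ref{mainres1}(4) with $D_{(0,a)}=-D_{(1,a)}$ and $D_{(0,b)}=D_{(1,b)}$ yields statement 4; in both cases perfect state transfer occurs at the same quotient time $\tau/2$ established above.

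No step is genuinely hard: the only small subtlety is double-checking that the blow-up partition is equitable even in the degenerate case $u=v$, where $c_{u,u}=0$ because the tensor product $J_n\otimes A(H)$ has vanishing diagonal blocks whenever $H$ is simple. After that verification, the three claimed equivalences are just the Corollaries quoted, and the time rescaling $\tau\mapsto \tau/n$ (resp.\ $\tau/2$) is recorded by the identity $A(X_n/\Pi)=nA(H)$.
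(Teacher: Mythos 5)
Your proposal is correct and follows essentially the same route as the paper: the same equitable partition into cells $\{(j,u):j\in\mathbb{Z}_n\}$, the observation $A(X_n/\Pi)=nA(H)$ giving the time rescaling, Corollary \ref{maincor}(3) for the equivalence of 1 and 2, and Corollary \ref{mainres1}(3,4) applied to $X_2$ for parts 3 and 4. The extra verifications you include (equitability via $J_n\otimes A(H)$, the diagonal cells) are details the paper leaves implicit.
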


\begin{proof}
For each $a\in V(H)$, $\cb{(j,a):j\in\Zl_n}$ forms a cell of equitable partition $\Pi$ of $V(X_n)$. Applying Corollary \ref{maincor} and noting that $A(X_n/\Pi)=nA(H)$ yields the equivalence of 1 and the first statement in 2. Meanwhile, applying Corollary \ref{mainres1}(3,4) with $G=X_2$ yields the equivalence of 3, 4 and the second statement in 2.
\end{proof}

\begin{rem}
Since $(0,a)$ and $(1,a)$ are twins in $\overset{2}{\uplus}~H$, the pair state $\frac{1}{\sqrt2}(\e_{(0,a)}-\e_{(1,a)})$ is an eigenvector for $A(\overset{2}{\uplus}~H)$, so it is not involved in perfect state transfer in $\overset{2}{\uplus}~H$ \cite{mon2}.
\end{rem}

\begin{rem}
\label{pgstbu}
Theorem \ref{corbu} holds when we replace `perfect state transfer' with `pretty good state transfer'. In this case, PGST occurs in 1 relative to $\{\tau_k\}$, in 2 relative to $\{\frac{\tau_k}{n}\}$, and in 3-4 relative to $\{\frac{\tau_k}{2}\}$.
\end{rem}

\begin{exa}
It is known that $P_3$ admits PST between vertices $a$ and $c$ at $\frac{\pi}{\sqrt{2}}$. Applying Theorem \ref{corbu}, we get PST between $\frac{1}{\sqrt2}(\e_{(0,a)}+\e_{(1,a)})$ and $\frac{1}{\sqrt2}(\e_{(0,c)}+\e_{(1,c)})$ in $\overset{2}{\uplus}~P_3$, between $\frac{1}{\sqrt2}(\e_{(0,a)}-\e_{(1,a)})$ and $\frac{1}{\sqrt2}(\e_{(0,c)}+\e_{(1,c)})$ in a signed $\overset{2}{\uplus}~P_3$, and between $\frac{1}{\sqrt2}(\e_{(0,a)}-\e_{(1,a)})$ and $\frac{1}{\sqrt2}(\e_{(0,c)}-\e_{(1,c)})$ in a signed $\overset{2}{\uplus}~P_3$ all at $\frac{\pi}{2\sqrt{2}}$ (see Figure \ref{3fig3}).
\end{exa}

\begin{figure}
\begin{multicols}{3}
\begin{center}
                    \begin{tikzpicture}[scale=1,auto=center]
                       \tikzstyle{every node}=[circle, thick, black!90, fill=white, scale=0.65]
               \node[draw,minimum size=0.55cm, 
                  inner sep=0 pt] (1) at (-2,0)  {$(0,a)$};     
              \node[draw,minimum size=0.55cm, 
                inner sep=0 pt] (2) at (0,0) {$(0,b)$};
				  \node[draw,minimum size=0.55cm, inner sep=0 pt] (3) at (2,0) {$(0,c)$};
                   \node[draw,minimum size=0.55cm, inner sep=0 pt] (4) at (-2,1.4) {$(1,a)$};
                  \node[draw,minimum size=0.55cm, inner sep=0 pt] (5) at (0,1.4) {$(1,b)$};
                 \node[draw,minimum size=0.55cm, 
                  inner sep=0 pt] (6) at (2,1.4) {$(1,c)$};

           \draw[thick, black!90] (1)-- (2)--(3);
            \draw[thick, black!90](4)--(5)--(6);
            \draw[thick, black!90] (1)-- (5)--(3);
              \draw[thick, black!90](6)--(2)--(4);
                \end{tikzpicture}

                 \begin{tikzpicture}[scale=1,auto=center]
                       \tikzstyle{every node}=[circle, thick, black!90, fill=white, scale=0.65]
               \node[draw,minimum size=0.55cm, 
                  inner sep=0 pt] (1) at (-2,0)  {$(0,a)$};     
              \node[draw,minimum size=0.55cm, 
                inner sep=0 pt] (2) at (0,0) {$(0,b)$};
				  \node[draw,minimum size=0.55cm, inner sep=0 pt] (3) at (2,0) {$(0,c)$};
                   \node[draw,minimum size=0.55cm, inner sep=0 pt] (4) at (-2,1.4) {$(1,a)$};
                  \node[draw,minimum size=0.55cm, inner sep=0 pt] (5) at (0,1.4) {$(1,b)$};
                 \node[draw,minimum size=0.55cm, 
                  inner sep=0 pt] (6) at (2,1.4) {$(1,c)$};
                 
            \draw[thick, black!90] (1)-- (2)--(3);
            \draw[thick, black!90](5)--(6);
            \draw[thick, black!90] (1)-- (5)--(3);
           
              \draw[thick, black!90] (6)--(2);
        \draw[thick, dashed, black!90](4)--(5);
        \draw[thick, dashed, black!90] (2)--(4);
		
                \end{tikzpicture}

                \begin{tikzpicture}[scale=1,auto=center]
                       \tikzstyle{every node}=[circle, thick, black!90, fill=white, scale=0.65]
               \node[draw,minimum size=0.55cm, 
                  inner sep=0 pt] (1) at (-2,0)  {$(0,a)$};     
              \node[draw,minimum size=0.55cm, 
                inner sep=0 pt] (2) at (0,0) {$(0,b)$};
				  \node[draw,minimum size=0.55cm, inner sep=0 pt] (3) at (2,0) {$(0,c)$};
                   \node[draw,minimum size=0.55cm, inner sep=0 pt] (4) at (-2,1.4) {$(1,a)$};
                  \node[draw,minimum size=0.55cm, inner sep=0 pt] (5) at (0,1.4) {$(1,b)$};
                 \node[draw,minimum size=0.55cm, 
                  inner sep=0 pt] (6) at (2,1.4) {$(1,c)$};
                 
            \draw[thick, black!90](1)--(2)--(3);
            \draw[thick, black!90](1)--(5);
            \draw[thick, black!90] (5)--(3);
        \draw[thick, dashed, black!90](4)--(5)--(6);
        \draw[thick, dashed, black!90](6)--(2)--(4);
		
                \end{tikzpicture}
        
                \end{center}
\end{multicols}
		\caption{$\overset{2}{\uplus}~P_3$ with PST between $\frac{1}{\sqrt2}(\e_{(0,a)}+\e_{(1,a)})$ and $\frac{1}{\sqrt2}(\e_{(0,c)}+\e_{(1,c)})$ (left), signed $\overset{2}{\uplus}~P_3$ with PST between $\frac{1}{\sqrt2}(\e_{(0,a)}-\e_{(1,a)})$ and $\frac{1}{\sqrt2}(\e_{(0,c)}+\e_{(1,c)})$ (centre), signed $\overset{2}{\uplus}~P_3$ with PST between $\frac{1}{\sqrt2}(\e_{(0,a)}-\e_{(1,a)})$ and $\frac{1}{\sqrt2}(\e_{(0,c)}-\e_{(1,c)})$ (right)
        }
  \label{3fig3}
	\end{figure}
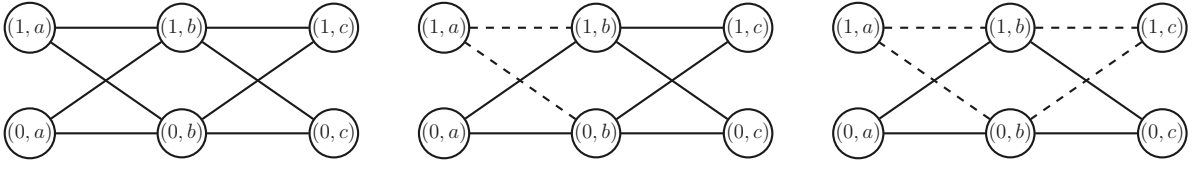

We now provide infinite families illustrating Theorem \ref{corbu}(1-2).

\begin{exa}
In \cite[Theorem 6]{pal9}, $\overset{2}{\uplus}~K_n$ has PST at $\frac{\pi}{2}$ between every pair of twin vertices $a$ and $b$ if and only if $n$ is even. Invoking Theorem \ref{corbu}(2), $\overset{2}{\uplus}\ob{\overset{2}{\uplus}~K_n}=\overset{4}{\uplus}~K_n$ admits plus state transfer at $\frac{\pi}{4}$ between $\frac{1}{\sqrt2}(\e_{(0,a)}+\e_{(1,a)})$ and $\frac{1}{\sqrt2}(\e_{(0,b)}+\e_{(1,b)})$ whenever $n$ is even.
\end{exa}

\begin{exa}
Let $X$ be a distance-regular graph that admits PST at $\tau$. By \cite[Corollary 4.5]{cou2}, $X$ is antipodal with classes of size two, and so PST occurs between any pair of vertices $a$ and $b$ at distance $d$. Invoking Theorem \ref{corbu}(2), $\overset{2}{\uplus}\ X$ admits plus state transfer at the same time between $\frac{1}{\sqrt2}(\e_{(0,a)}+\e_{(1,a)})$ and $\frac{1}{\sqrt2}(\e_{(0,b)}+\e_{(1,b)})$. In particular, this holds if we take $X=Q_d$, the hypercube of dimension $d$, for all $d\geq 1$ with $\tau=\frac{\pi}{2}$. 
\end{exa}

\begin{exa}
There is PGST in $P_n$ between vertices $a$ and $b$ if and only if $a+b=n+1$ and either (i) $n=2^t-1$, (ii) $n=p-1$ for some odd prime $p$, or (iii) $n=2^tp-1$ for some integer $t>0$ and odd prime $p$, and $a$ is a multiple of $2^{t-1}$ \cite[Theorem 4.3.3]{Bommel2019}. Invoking Theorem \ref{corbu}(2) and Remark \ref{pgstbu}, we get PGST in $\overset{2}{\uplus}\ P_n$ between $\frac{1}{\sqrt2}(\e_{(0,a)}+\e_{(1,a)})$ and $\frac{1}{\sqrt2}(\e_{(0,b)}+\e_{(1,b)})$ if and only if $a+b=n+1$ and one of conditions (i)-(iii) hold.
\end{exa}

\begin{exa}
There is PGST in $C_n$ between vertices $a$ and $b$ if and only if (i) $n$ is a power of two and (ii) $a$ and $b$ are antipodal vertices in $C_n$ \cite[Theorem 13]{pal4}. Again invoking Theorem \ref{corbu}(2) and Remark \ref{pgstbu}, we get PGST in $\overset{2}{\uplus}\ C_n$ between $\frac{1}{\sqrt2}(\e_{(0,a)}+\e_{(1,a)})$ and $\frac{1}{\sqrt2}(\e_{(0,b)}+\e_{(1,b)})$ if and only if conditions (i) and (ii) hold.
\end{exa}

\section{Sedentariness}\label{sec:sed}

Recall that a pure state $\u$ is \textit{$C$-sedentary} in $G$ if for some constant $0 < C \leq 1,$
 \[\inf_{t>0}~\mb{\u^*U_G(t)\u}\geq C.\]
If $C$ is not important, then we simply say that $\u$ is sedentary. In this section, we study the case when $\u$ is a vertex state, a plus state, or a pair state.
 
Vertex sedentariness was first studied in depth by Monterde in \cite{mon}, where she observed that a sedentary vertex in a graph does not have PGST. This observation extends to any pure state: a sedentary pure state is not involved in PGST. The most common example of a graph with sedentary vertices is the complete graph: for all $n\geq 3$, the vertex state $\e_a$ in $K_n$ is $C$-sedentary for all $a\in V(K_n)$, where $C=\frac{n-2}{n}$ \cite[Equation 6]{mon}.



The following result is analogous to Corollary \ref{mainres} and follows from Theorem \ref{3t3}.

\begin{cor}\label{3c5}
Let $G$ be a bounded graph with edge-perturbed twin subgraphs $X_1$ and $X_2$ and let $a$ be a vertex of $X_1$. 
The following are equivalent.
\begin{enumerate}
\item Vertex $a$ is $C$-sedentary in $X_1$ relative to $A(X_1)-A'$.
\item The pair state $\frac{1}{\sqrt2}(\e_a-\e_{\hat{f}(a)})$ is $C$-sedentary in $G$.
\item The plus state $\frac{1}{\sqrt2}(\e_a+\e_{\hat{f}(a)})$ is $C$-sedentary in $\tilde{G}_D$, where $D$ is a diagonal matrix of $\pm 1$'s with $D_a=-D_{\hat{f}(a)}$.
\end{enumerate}
\end{cor}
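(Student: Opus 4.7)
The strategy is exactly the one that delivered Corollary \ref{mainres}: apply Theorem \ref{3t3} with two well-chosen diagonal matrices $D$ so that conjugating $U_{\tilde G_D}(t)$ by the appropriate column $DQ\check{\e}_a$ isolates the diagonal entry $(U'(t))_{a,a}$ of the quotient-side transition matrix $U'(t)=\exp(it(A(X_1)-A'))$. Since $C$-sedentariness is phrased through the modulus $|\u^*U_{\cdot}(t)\u|$, any $\pm1$ sign produced by the choice of $D$ will vanish, and all three inequalities in $1$-$3$ collapse to the single inequality $\inf_{t>0}|(U'(t))_{a,a}|\geq C$.

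For $1\Leftrightarrow 2$, I would take $D=I$ in Theorem \ref{3t3}, so that $\tilde G_D=G$ and $DQ=Q$. Letting $\check{\e}_a$ denote the basis vector with $Q\check{\e}_a=\tfrac{1}{\sqrt2}(\e_a-\e_{\hat f(a)})$, conjugation by $\check{\e}_a$ selects the upper-left block of the block-diagonal form in Theorem \ref{3t3}, giving
\[\Bigl(\tfrac{1}{\sqrt2}(\e_a-\e_{\hat f(a)})\Bigr)^{\!*}U_G(t)\Bigl(\tfrac{1}{\sqrt2}(\e_a-\e_{\hat f(a)})\Bigr)=\check{\e}_a^{\,T}U'(t)\check{\e}_a=(U'(t))_{a,a}.\]
Taking absolute values and then $\inf_{t>0}$ on both sides shows that the pair state $\tfrac{1}{\sqrt2}(\e_a-\e_{\hat f(a)})$ is $C$-sedentary in $G$ if and only if vertex $a$ is $C$-sedentary in $X_1$ relative to $A(X_1)-A'$.

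For $1\Leftrightarrow 3$, I would choose $D$ a $\pm1$ diagonal matrix with $D_a=-D_{\hat f(a)}$, so that $DQ\check{\e}_a=\tfrac{\delta}{\sqrt2}(\e_a+\e_{\hat f(a)})$ for some sign $\delta\in\{\pm1\}$. Applying Theorem \ref{3t3} in the same way yields
\[\Bigl(\tfrac{1}{\sqrt2}(\e_a+\e_{\hat f(a)})\Bigr)^{\!*}U_{\tilde G_D}(t)\Bigl(\tfrac{1}{\sqrt2}(\e_a+\e_{\hat f(a)})\Bigr)=\delta^{2}\,\check{\e}_a^{\,T}U'(t)\check{\e}_a=(U'(t))_{a,a},\]
so the modulus and the infimum over $t>0$ match the quantity appearing in 1, and the equivalence follows.

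There is no real obstacle to overcome; the argument is a direct specialization of Theorem \ref{3t3} to quadratic forms of the shape $\u^*U_{\cdot}(t)\u$ with $\u$ a pair or plus state built from an edge-perturbed twin pair. The only points that require a little care are picking the correct $D$ for each equivalence and noting that the sign $\delta$ is annihilated by the modulus, which is why the bound $C$ transfers verbatim between the three settings.
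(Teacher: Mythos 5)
Your proposal is correct and follows exactly the route the paper intends: the paper gives no written proof beyond stating that Corollary \ref{3c5} is analogous to Corollary \ref{mainres} and follows from Theorem \ref{3t3}, and your argument---reading off the $(\check{\e}_a,\check{\e}_a)$ diagonal entry of the block form $(DQ)^TU_{\tilde{G}_D}(t)DQ$ for $D=I$ and for $D$ with $D_a=-D_{\hat f(a)}$, then noting the sign $\delta$ dies under the modulus---is precisely the intended specialization.
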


We say that vertices $u$ and $v$ are \textit{twins} in a weighted (signed or unsigned) graph $G$ if $N_G(u)\backslash\{v\}=N_G(v)\backslash\{u\}$ and $w(a,u)=w(a,v)$ for all $a\in N_G(u)\backslash\{v\}$. A maximal subset $T\subseteq V(G)$ is a \textit{twin set} in $G$ if every pair of vertices in $T$ are twins.

\begin{thm}
\label{twins}
Let $T$ be a twin set in $X_1$ with $|T|\geq 3$ and $C=1-\frac{2}{|T|}$. If $X_1$ and $X_2$ are twin subgraphs of a bounded graph $G$ and $a\in T$, then $\frac{1}{\sqrt2}(\e_a-\e_{\hat{f}(a)})$ is $C$-sedentary in $G$ and $\frac{1}{\sqrt2}(\e_a+\e_{\hat{f}(a)})$ is $C$-sedentary in $\tilde{G}_D$, where $D$ is given in Corollary \ref{3c5}(3).
\end{thm}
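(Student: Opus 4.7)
The plan is to reduce the problem to vertex sedentariness in $X_1$ via Corollary \ref{3c5}, and then exploit the large eigenspace of $A(X_1)$ induced by the twin set $T$. First, since $X_1$ and $X_2$ are twin subgraphs (not merely edge-perturbed), Remark \ref{A'} gives $A' = \mathbf{0}$, so Corollary \ref{3c5} reduces the claim to showing that vertex $a$ is $C$-sedentary in $X_1$ relative to $A(X_1)$. Note that $X_1$ inherits boundedness from $G$, so $A(X_1)$ is a bounded self-adjoint operator and we have functional calculus at our disposal.

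Next, I would observe that for any two twins $u, v \in T$, a direct computation yields $A(X_1)(\e_u - \e_v) = \lambda(\e_u - \e_v)$ with $\lambda = 0$ if $u \not\sim v$ and $\lambda = -1$ if $u \sim v$. A short case-analysis using the twin definition forces every pair in $T$ to have the same adjacency status, so $T$ is either an independent set (giving $\lambda = 0$) or a clique (giving $\lambda = -1$). Consequently, the $(|T|-1)$-dimensional subspace
\[
W := \left\{\sum_{v \in T} c_v \e_v : \sum_{v \in T} c_v = 0\right\} = \mathrm{span}\{\e_a - \e_v : v \in T\setminus\{a\}\}
\]
is contained in the $\lambda$-eigenspace of $A(X_1)$.

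Let $E_\lambda$ denote the spectral projection of $A(X_1)$ at $\lambda$. The orthogonal projection of $\e_a$ onto $W$ equals $\e_a - \tfrac{1}{|T|}\sum_{v \in T}\e_v$, whose squared norm is $1 - \tfrac{1}{|T|}$, so $\|E_\lambda \e_a\|^2 \geq 1 - \tfrac{1}{|T|}$. Splitting the spectral integral gives
\begin{equation*}
\e_a^* U_{X_1}(t)\e_a = e^{it\lambda}\|E_\lambda \e_a\|^2 + \int_{\mu\neq \lambda} e^{it\mu}\, d\langle E(\mu)\e_a,\e_a\rangle,
\end{equation*}
and since $d\langle E(\mu)\e_a,\e_a\rangle$ is a probability measure whose restriction to $\{\mu \neq \lambda\}$ has total mass $1 - \|E_\lambda \e_a\|^2 \leq \tfrac{1}{|T|}$, the second term has modulus at most $\tfrac{1}{|T|}$. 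The triangle inequality then yields
\begin{equation*}
\inf_{t>0}|\e_a^* U_{X_1}(t)\e_a| \geq 2\|E_\lambda \e_a\|^2 - 1 \geq 1 - \tfrac{2}{|T|} = C,
\end{equation*}
which is exactly $C$-sedentariness of vertex $a$ in $X_1$.

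The only delicate point is applying spectral calculus cleanly when $X_1$ is infinite, but this is routine: $\lambda$ lies in the point spectrum (it has actual eigenvectors from $W$), and the mass bound above follows by integrating the unit-modulus function $e^{it\mu}$ against the probability measure $d\langle E(\mu)\e_a,\e_a\rangle$. I expect the small case-analysis for the dichotomy between $T$ being a clique versus an independent set to be the only real bookkeeping step.
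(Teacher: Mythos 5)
Your proof is correct and follows the same route as the paper: use the fact that twin subgraphs give $A'=\mathbf{0}$ and then invoke Corollary \ref{3c5} to reduce everything to vertex sedentariness of $a$ in $X_1$. The only difference is that the paper simply cites \cite[Theorem 16]{mon} for the fact that a vertex in a twin set of size $|T|\geq 3$ is $\left(1-\frac{2}{|T|}\right)$-sedentary, whereas you reprove that ingredient from scratch via the clique/independent-set dichotomy and the eigenprojection bound $\|E_\lambda\e_a\|^2\geq 1-\frac{1}{|T|}$, which is essentially the argument behind the cited result (your only cosmetic imprecision being that for weighted twins the eigenvalue is $-w(u,v)$ rather than $-1$, which changes nothing).
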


\begin{proof}
Since $T$ is a twin set in $X_1$ with $|T|\geq 3$, it follows that each $a\in T$ is $C$-sedentary in $X_1$ \cite[Theorem 16]{mon}. As $X_1$ and $X_2$ are twin subgraphs of $G$, it follows that $A'=0$, and so applying Corollary \ref{3c5} yields the desired result.
\end{proof}

\begin{figure}
\begin{multicols}{2}
\begin{center}
\begin{tikzpicture}[scale=.5,auto=left]
                       \tikzstyle{every node}=[circle, thick, fill=white, scale=0.6]
                       
		        \node[draw] (1) at (1.5,0.25) {$4$};		        
		        \node[draw,minimum size=0.7cm, inner sep=0 pt] (2) at (-3.5, 2.5) {$2$};
		        \node[draw,minimum size=0.7cm, inner sep=0 pt] (3) at (-1, 2.5) {$3$};
                    \node[draw,minimum size=0.7cm,
                    inner sep=0 pt] (6) at (-2.25, 1) {$1$};
		        \node[draw,minimum size=0.7cm, inner sep=0 pt] (4) at (-3.5, -2) {$6$};
		        \node[draw,minimum size=0.7cm, inner sep=0 pt] (5) at (-1, -2) {$7$};
                \node[draw,minimum size=0.7cm, inner sep=0 pt] (7) at (-2.25, -0.5) {$5$};

				\node at (6.3,0.5) {$H$};

				\draw[dotted] (3.2,0.5) circle (2.5 cm);
								
				\draw [thick, black!70] (1)--(3)--(2);
                \draw [thick, black!70] (2)--(6)--(3);
                \draw [thick, black!70] (5)--(7)--(4);
				\draw [thick, black!70] (5)--(4);
                \draw [thick, black!70] (1)--(5);

				\draw[thick, black!70] (1)..controls (3,4) and (8,0)..(1);
				\end{tikzpicture}

                \begin{tikzpicture}[scale=.5,auto=left]
                       \tikzstyle{every node}=[circle, thick, fill=white, scale=0.6]
                       
		        \node[draw] (1) at (1.5,0.25) {$4$};		        
		        \node[draw,minimum size=0.7cm, inner sep=0 pt] (2) at (-3.5, 2.5) {$2$};
		        \node[draw,minimum size=0.7cm, inner sep=0 pt] (3) at (-1, 2.5) {$3$};
                    \node[draw,minimum size=0.7cm,
                    inner sep=0 pt] (6) at (-2.25, 1) {$1$};
		        \node[draw,minimum size=0.7cm, inner sep=0 pt] (4) at (-3.5, -2) {$6$};
		        \node[draw,minimum size=0.7cm, inner sep=0 pt] (5) at (-1, -2) {$7$};
                \node[draw,minimum size=0.7cm, inner sep=0 pt] (7) at (-2.25, -0.5) {$5$};

				\node at (6.3,0.5) {$H$};

				\draw[dotted] (3.2,0.5) circle (2.5 cm);
								
				\draw [thick, black!70] (1)--(3)--(2);
                \draw [thick, black!70] (2)--(6)--(3);
                \draw [thick, dashed, black!70] (5)--(7)--(4);
				\draw [thick, black!70] (5)--(4);
                \draw [thick, black!70] (1)--(5);

				\draw[thick, black!70] (1)..controls (3,4) and (8,0)..(1);
				\end{tikzpicture}
\end{center}	
\end{multicols}
\caption{\label{fig4} The graph $G$ (left) and the signed graph $\tilde{G}_D$ (right) with $X_1=K_3$}
\end{figure}
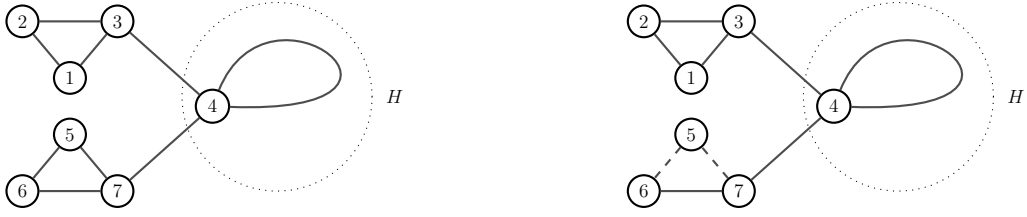

\begin{exa}
Let $G$ be a bounded graph with twin subgraphs $X_1=X_2=K_n$, where $V(X_1)=\{1,\ldots,n\}$ and $V(X_2)=\{n+2,\ldots,2n+1\}$ (see Figure \ref{fig4} for $n=3$).
Consider $\tilde{G}_D$, where $D$ is a diagonal matrix of $\pm 1$'s with $D_1=-D_{n+2}$. Each vertex of $X_1$ is $(1-\frac{2}{n})$-sedentary, and so by Theorem \ref{twins}, $\frac{1}{\sqrt{2}}(\e_j-\e_k)$ for $j\in\{1,\ldots,n-1\}$ and $k\in\{n+2,\ldots,2n\}$ is $(1-\frac{2}{n})$-sedentary in $G$, while $\frac{1}{\sqrt{2}}(\e_1+\e_{n+2})$ is $(1-\frac{2}{n})$-sedentary in $\tilde{G}_D$.
\end{exa}

The next result is analogous to Corollary \ref{mainres1} and again follows from Theorem \ref{3t3}.

\begin{figure}[h!]\label{fig:small} 
	\begin{center}
		\begin{tikzpicture}
		\tikzset{enclosed/.style={draw, circle, inner sep=0pt, minimum size=.3cm}}	   
	   \node[enclosed, label={left, yshift=0cm: $u$}] (w_1) at (0.4,1.9) {};
	    \node[enclosed, label={left, yshift=0cm: $v$}] (w_2) at (0.4,3.1) {};
		\node[enclosed] (w_3) at (1.5,2.5) {};
		\node[enclosed] (w_4) at (3,2.5) {};
		\node[enclosed] (w_6) at (5,2.5) {};
		\draw (w_1) -- (w_3);
		\draw (w_2) -- (w_3);
		\draw (w_4) -- (w_3);
		\draw (w_4) -- node[below] {$\underbrace{}_{n-2\ \text{vertices}}$} (w_6)[dashed];
		\end{tikzpicture}
	\end{center}
	\caption{The graph $P_n'$ with twin vertices $u$ and $v$}\label{yay}
\end{figure}
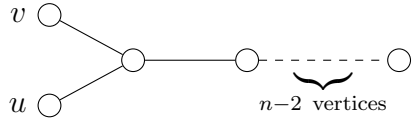

\begin{cor}
\label{plusPi2}
Let $G$ be a bounded graph and $\Pi$ be an equitable partition of $G$ with cell $V_1=\{a,b\}$. 
The following are equivalent.
\begin{enumerate}
\item Vertex $V_1$ is $C$-sedentary in $G/\Pi$.
\item The plus state $\frac{1}{\sqrt2}(\e_a+\e_{b})$ is $C$-sedentary in $G$.
\item The pair state $\frac{1}{\sqrt2}(\e_a-\e_{b})$ is $C$-sedentary in $\tilde{G}_D$, where $D$ is a diagonal matrix of $\pm 1$'s with $D_{a}=-D_{b}$.
\end{enumerate}
\end{cor}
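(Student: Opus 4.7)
The plan is to prove the equivalences $1 \Leftrightarrow 2$ and $2 \Leftrightarrow 3$ separately, mirroring the strategy used in Corollaries \ref{mainres} and \ref{mainres1}. The key principle is that $C$-sedentariness of a pure state $\u$ is characterized by the single quantity $\inf_{t>0}|\u^*U(t)\u|$, so it suffices to verify that the relevant diagonal entries of the transition matrices coincide in modulus for every $t$; taking the infimum then transfers the $C$-sedentariness bound.

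For $1 \Leftrightarrow 2$, I would invoke the identity $U_G(t)\mathcal{C} = \mathcal{C}U_{G/\Pi}(t)$ from equation (\ref{G/Pi}), which is available here because $\Pi$ is equitable and $G$ is bounded (so Theorem \ref{infg} guarantees convergence of the power series defining $U_G(t)$). Let $\check{\e}_1$ denote the characteristic vector for the cell $V_1 = \{a,b\}$ in $G/\Pi$, so that $\mathcal{C}\check{\e}_1 = \frac{1}{\sqrt{2}}(\e_a + \e_b)$. Using $\mathcal{C}^T\mathcal{C} = I$, one obtains
\[
\check{\e}_1^T U_{G/\Pi}(t)\check{\e}_1 = \check{\e}_1^T \mathcal{C}^T U_G(t)\mathcal{C}\check{\e}_1 = \tfrac{1}{2}(\e_a+\e_b)^T U_G(t)(\e_a+\e_b)
\]
for all $t \in \Rl$. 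Since the moduli agree pointwise, their infima over $t > 0$ agree, giving the equivalence.

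For $2 \Leftrightarrow 3$, I would use $U_{\tilde{G}_D}(t) = DU_G(t)D$, which is immediate from $A(\tilde{G}_D) = DA(G)D$ and $D^2 = I$. Since $D_a = -D_b$, we have $D(\e_a - \e_b) = \pm(\e_a + \e_b)$, so
\[
\tfrac{1}{2}(\e_a-\e_b)^T U_{\tilde{G}_D}(t)(\e_a-\e_b) = \tfrac{1}{2}(\e_a+\e_b)^T U_G(t)(\e_a+\e_b).
\]
Again, equality of moduli for every $t$ yields equality of the infima, hence the equivalence of 2 and 3.

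I do not anticipate any substantial obstacle; the proof is essentially bookkeeping of the identities already packaged in Theorems \ref{mainthm} and \ref{3t3}, specialized to a single size-two cell. The mild subtlety is that, unlike in Corollary \ref{mainres}, the cell $V_1 = \{a,b\}$ need not come from twin subgraphs, so the block $\mathcal{B}$ of $Q$ is absent and one uses $Q = \mathcal{C}$ as noted in the remark following Theorem \ref{mainthm}. This is precisely the sedentariness analogue of Corollary \ref{mainres1}, with pointwise equality of moduli replacing pointwise equality of transition amplitudes.
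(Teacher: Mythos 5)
Your proposal is correct and follows essentially the same route as the paper, which simply derives the corollary from Theorem \ref{3t3} (equivalently, the identities $\mathcal{C}^TU_G(t)\mathcal{C}=U_{G/\Pi}(t)$ and $U_{\tilde{G}_D}(t)=DU_G(t)D$ with $Q=\mathcal{C}$, exactly the two facts you unpack). The pointwise equality of the relevant diagonal transition amplitudes, followed by taking the infimum over $t>0$, is precisely the intended argument, and your observation that no twin subgraphs are needed here matches the remark following Theorem \ref{mainthm}.
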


Applying Corollary \ref{plusPi2} to blow-up graphs gives us the following result.

\begin{cor}
\label{sedbu}
Let $H$ be a graph and $X:=\overset{2}{\uplus}~H$. The following are equivalent.
\begin{enumerate}
\item Vertex $a$ is $C$-sedentary in $H$.
\item The plus state $\frac{1}{\sqrt2}(\e_{(0,a)}+\e_{(1,a)})$ is $C$-sedentary in $X$.
\item The pair state $\frac{1}{\sqrt2}(\e_{(0,a)}-\e_{(1,a)})$ is $C$-sedentary in $\tilde{X}$, where $A(\tilde{X})=DA(X)D$ for some diagonal matrix $D$ of $\pm 1$'s such that $D_{(1,a)}=-D_{(0,a)}.$
\end{enumerate}
\end{cor}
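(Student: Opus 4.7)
The plan is to obtain Corollary \ref{sedbu} as a direct consequence of Corollary \ref{plusPi2} applied to the blow-up graph $X = \overset{2}{\uplus} H$. First, I would recognize the partition $\Pi$ of $V(X)$ whose cells are the pairs $\{(0,a),(1,a)\}$ for each $a\in V(H)$. This partition is equitable: the vertices $(0,a)$ and $(1,a)$ have identical neighbourhoods in $X$ (they are in fact twins), and the number of neighbours of $(j,a)$ lying in the cell $\{(0,b),(1,b)\}$ equals $2A(H)_{a,b}$, independent of $j$. As noted in Section \ref{sec:blow} (cf.\ the proof of Theorem \ref{corbu}), this yields $A(X/\Pi) = 2A(H)$.

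Next, I would take the cell $V_1=\{(0,a),(1,a)\}$ and apply Corollary \ref{plusPi2} with $G=X$. This immediately gives the equivalence between (2) the plus state $\frac{1}{\sqrt{2}}(\e_{(0,a)}+\e_{(1,a)})$ being $C$-sedentary in $X$, (3) the pair state $\frac{1}{\sqrt{2}}(\e_{(0,a)}-\e_{(1,a)})$ being $C$-sedentary in $\tilde{X}$ (under a sign diagonal $D$ with $D_{(0,a)}=-D_{(1,a)}$), and the auxiliary statement that the cell $V_1$ is $C$-sedentary as a vertex in the quotient $X/\Pi$.

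To close the loop with (1), I would show that $V_1$ is $C$-sedentary in $X/\Pi$ if and only if the corresponding vertex $a$ is $C$-sedentary in $H$. Since $A(X/\Pi)=2A(H)$, we have $U_{X/\Pi}(t) = U_H(2t)$, and a change of variables $s=2t$ gives
\[
\inf_{t>0}\bigl|\e_a^{*} U_{X/\Pi}(t)\e_a\bigr| \;=\; \inf_{t>0}\bigl|\e_a^{*} U_H(2t)\e_a\bigr| \;=\; \inf_{s>0}\bigl|\e_a^{*} U_H(s)\e_a\bigr|,
\]
so the sedentariness constant of $V_1$ in $X/\Pi$ coincides exactly with that of $a$ in $H$. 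Combining this rescaling identity with Corollary \ref{plusPi2} yields the three-way equivalence.

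There is no serious obstacle here; the only point requiring any care is the time-rescaling argument above, which ensures that the constant $C$ is preserved when passing between $H$ and $X/\Pi$. Everything else reduces to identifying the correct equitable partition and invoking Corollary \ref{plusPi2}.
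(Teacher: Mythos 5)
Your proposal is correct and follows essentially the same route as the paper, which derives Corollary \ref{sedbu} by applying Corollary \ref{plusPi2} to the equitable partition of $\overset{2}{\uplus}H$ into twin cells $\{(0,a),(1,a)\}$ with quotient $2A(H)$. Your explicit time-rescaling argument ($U_{X/\Pi}(t)=U_H(2t)$, so the infimum over $t>0$ and hence the constant $C$ is unchanged) correctly fills in the one detail the paper leaves implicit.
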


Again, using the fact that each vertex $a$ of $K_n$ is $C$-sedentary for all $n\geq 3$, where $C=\frac{n-2}{n}$, we get the following consequence of Corollary \ref{sedbu}.

\begin{exa}
The plus state $\frac{1}{\sqrt2}(\e_{(0,a)}+\e_{(1,a)})$ is $C$-sedentary in $\overset{2}{\uplus}~K_n$ for all $a\in V(K_n)$.
\end{exa}

Let $n\geq 3$ be odd and consider the family $P_n'$ of graphs obtained from $P_n$ by adding vertex $v$ that is twins with vertex $u=1$ of $P_n$ (see Figure \ref{yay}). In \cite{monterde2023new}, Monterde showed that $u$ and $v$ are sedentary vertices. 
Applying Corollary \ref{3c5} with $X_1=X_2=P_n'$ as twin subgraphs yields the following two results, one of which is analogous to Theorem \ref{inftreesmaxdeg3} and Corollary \ref{inftreesmaxdeg31}, and another one analogous to Theorem \ref{infsignedplus} and Corollary \ref{treemaxdeg3}, respectively.

\begin{thm}
For each $k\geq 3$, there are infinitely many connected unweighted graphs with maximum degree $k$ containing a sedentary pair state. In particular, there are infinitely many unweighted trees with maximum degree three and a sedentary pair state.
\end{thm}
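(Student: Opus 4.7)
The plan is to apply Corollary \ref{3c5} to a family of graphs built from the twin-vertex path $P_n'$ of Figure \ref{yay}. Since \cite{monterde2023new} establishes that for each odd $n\geq 3$, the vertex $u$ of $P_n'$ is $C_n$-sedentary relative to $A(P_n')$ for some constant $C_n>0$, placing two disjoint copies of $P_n'$ as (non-edge-perturbed) twin subgraphs $X_1=X_2=P_n'$ inside a bounded ambient graph $G$ with $A'=0$ will, by Corollary \ref{3c5}, yield that the pair state $\frac{1}{\sqrt{2}}(\e_u-\e_{\hat{f}(u)})$ is $C_n$-sedentary in $G$.

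For each $k\geq 3$, fix any connected unweighted graph $H$ of maximum degree $k$ containing a vertex $r$ of degree at most $k-2$ in $H$ (for instance, take $H=K_{1,k}$ with $r$ any leaf). For each odd $n\geq 3$, let $G_n$ be the graph obtained by taking two disjoint copies of $P_n'$ together with a copy of $H$, then joining the path-endpoint labelled $n$ in each copy of $P_n'$ to $r$ by a single edge. Since $P_n'$ has maximum degree $3$ (attained at the vertex adjacent to both $u$ and $v$) and $r$ gains at most $2$ additional neighbours, $G_n$ is connected, unweighted, and has maximum degree exactly $k$, attained in the interior of $H$. The two copies of $P_n'$ are clearly twin subgraphs with $f$ the natural isomorphism, and there are no edges between them, so $A'=0$. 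Applying Corollary \ref{3c5} with $a=u$ produces a sedentary pair state $\frac{1}{\sqrt{2}}(\e_u-\e_{\hat{f}(u)})$ in $G_n$. Varying $n$ over odd integers at least $3$ yields infinitely many pairwise non-isomorphic such graphs.

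For the second statement, take the same construction but with $H$ replaced by a single vertex $r$, so that $G_n$ is the tree formed by joining two disjoint copies of $P_n'$ through a common new vertex $r$ adjacent to vertex $n$ of each copy. Then $G_n$ is a tree of maximum degree $3$, and Corollary \ref{3c5} again furnishes a sedentary pair state. Varying $n$ produces infinitely many such trees.

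I do not expect a real obstacle here. The only verification required is the twin-subgraph hypothesis of Corollary \ref{3c5}, which is immediate: the only outside vertex adjacent to either copy of $P_n'$ is $r$, and it is adjacent symmetrically to the corresponding path-endpoint of each copy, so $w(f(a),y)=w(a,y)$ holds trivially for every $a\in V(X_1)$ and every $y\in V(G_n)\setminus V(X_1\cup X_2)$. Since $G_n$ is finite, it is automatically bounded, so Corollary \ref{3c5} applies without issue.
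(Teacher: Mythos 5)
Your proposal is correct and follows essentially the same route as the paper: both use Monterde's result that the vertex $u$ of $P_n'$ is sedentary, embed two disjoint copies of $P_n'$ as twin subgraphs (so $A'=0$) in a connected ambient graph of the desired maximum degree, and invoke Corollary \ref{3c5} to transfer sedentariness to the pair state $\frac{1}{\sqrt{2}}(\e_u-\e_{\hat{f}(u)})$. The only difference is that you spell out the ambient construction explicitly, which the paper leaves implicit.
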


\begin{thm}
For each $k\geq 3$, there are infinitely many connected unweighted signed graphs $\hat{G}$ such that the underlying graphs have maximum degree $k$, $\hat{G}$ has exactly one edge of negative weight, and $\hat{G}$ contains a sedentary plus state. In particular, there are infinitely many unweighted signed trees $\hat{T}$ such that the underlying graphs have maximum degree three, $\hat{T}$ has exactly one negative edge weight and $\hat{T}$ contains a sedentary plus state.
\end{thm}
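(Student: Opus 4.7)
The plan is to re-use the construction that underpinned the preceding theorem on sedentary pair states, and then transport it to plus states in a signed graph via Corollary~\ref{3c5}(3). For each odd $n\ge 3$, I would let $G$ be a bounded graph in which $X_1=X_2=P_n'$ appear as (non-edge-perturbed) twin subgraphs with switching automorphism $\hat{f}$, attached to an auxiliary connected graph $H$ so as to make $G$ connected. Because there are no edges between $X_1$ and $X_2$, Remark~\ref{A'} gives $A'=\mathbf{0}$, so that $A(X_1)-A'=A(P_n')$. The twin vertex $u$ of $P_n'$ is $C$-sedentary for some constant $C=C(n)>0$ by \cite{monterde2023new}, and Corollary~\ref{3c5}(3) then yields that $\frac{1}{\sqrt{2}}\ob{\e_u+\e_{\hat{f}(u)}}$ is $C$-sedentary in $\tilde{G}_D$ for any diagonal matrix $D$ of $\pm 1$'s with $D_u=-D_{\hat{f}(u)}$.

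To secure the single-negative-edge condition I would set $D_u=-1$ and $D_x=+1$ for every other vertex $x$ of $G$. Under the conjugation $A(\tilde{G}_D)=DA(G)D$, an edge $\{x,y\}$ changes sign precisely when $D_x D_y=-1$, so only edges incident to $u$ become negative. By design $u$ is a leaf of $P_n'$ whose unique neighbour inside $X_1$ is vertex $2$, and I would route every external adjacency to $H$ through a vertex of $P_n'$ distinct from $u$ and $v$, for instance the far endpoint of the path, so that $u$ has degree exactly one in $G$. Consequently $\tilde{G}_D$ carries exactly one negative edge, namely $\{u,2\}$, and houses the sedentary plus state guaranteed by the previous paragraph.

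To obtain the maximum-degree $k$ statement for each $k\ge 3$, I would pick $H$ to be a connected graph of maximum degree $k$ and attach the two copies of $P_n'$ so that the junction vertex of $H$ has degree at most $k$ after absorbing the two induced edges; varying $n$ over odd integers $\ge 3$, or varying $H$, produces infinitely many pairwise non-isomorphic examples. For the trees-of-maximum-degree-three case I would specialize $H$ to be a tree of maximum degree three whose attachment vertex is a leaf, which guarantees that $G$, and hence $\tilde{G}_D$, is a tree of maximum degree three. The only potential obstacle is the degree bookkeeping that enforces exactly one negative edge; this is delicate only in that one must confirm $u$ does not acquire any neighbours outside $V(X_1)$, and this is automatic from the definition of twin subgraphs together with the choice of attachment vertex.
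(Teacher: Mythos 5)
Your proposal is correct and follows essentially the same route as the paper: take $X_1=X_2=P_n'$ as twin subgraphs attached to an auxiliary graph $H$, use the sedentariness of the twin vertex $u$ from \cite{monterde2023new} together with $A'=\mathbf{0}$, and apply Corollary~\ref{3c5}(3) with a switching $D$ supported on the leaf $u$ so that exactly one edge becomes negative. The degree and connectivity bookkeeping you supply is exactly what the paper leaves implicit.
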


Lastly, adapting similar proofs to that of Theorem \ref{almostallplanar} and \ref{almostalltrees} yields the following result.

\begin{thm}
Almost all labelled connected unweighted planar graphs contain a sedentary pair state. 
Moreover, almost all labelled connected unweighted planar graphs can be assigned a single negative edge so that the resulting signed graphs contain a sedentary plus state. This statement holds if we replace `labelled connected planar graphs' with `trees'.
\end{thm}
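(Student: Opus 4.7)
The plan is to mirror the proofs of Theorems \ref{almostallplanar} and \ref{almostalltrees}, but replacing the rooted $P_5$-branch with a rooted subgraph whose twin-subgraph structure forces sedentariness (via Theorem \ref{twins}) instead of perfect state transfer (via Corollary \ref{mainres1}).

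Concretely, take $L$ to be the rooted tree on nine vertices in which the root $r$ has exactly two children $c_1, c_2$, and each $c_j$ has exactly three leaf children $\ell_1^{(j)}, \ell_2^{(j)}, \ell_3^{(j)}$. Any graph $G$ containing $L$ as a rooted branch (attached to the rest of $G$ only through $r$) automatically contains two disjoint induced copies of $K_{1,3}$, namely $X_1 = \{c_1, \ell_1^{(1)}, \ell_2^{(1)}, \ell_3^{(1)}\}$ and $X_2 = \{c_2, \ell_1^{(2)}, \ell_2^{(2)}, \ell_3^{(2)}\}$, which form twin subgraphs of $G$ with no edges between them, via the isomorphism $f(c_1)=c_2$ and $f(\ell_i^{(1)})=\ell_i^{(2)}$. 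In each $X_j$, the three leaves form a twin set of size three, so Theorem \ref{twins} yields that $\frac{1}{\sqrt{2}}(\e_{\ell_1^{(1)}}-\e_{\ell_1^{(2)}})$ is a $\frac{1}{3}$-sedentary pair state in $G$.

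For the single-negative-edge signed version, define $D$ by $D_u=-1$ for $u\in X_2$ and $D_u=+1$ otherwise. Then $D_a=-D_{\hat{f}(a)}$ for every $a\in X_1$, and the only edge whose endpoints receive opposite signs under $D$ is $\{r,c_2\}$, so $\tilde{G}_D$ differs from $G$ by exactly one negative edge. Theorem \ref{twins} (combined with Corollary \ref{3c5}(3)) then produces the $\frac{1}{3}$-sedentary plus state $\frac{1}{\sqrt{2}}(\e_{\ell_1^{(1)}}+\e_{\ell_1^{(2)}})$ in $\tilde{G}_D$.

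What remains is to force $L$ to appear in almost all labelled connected planar graphs and in almost all trees. For trees this is immediate from Schwenk's theorem \cite[Theorem 7]{schwenk1973almost} applied to the rooted tree $L$. For planar graphs I would use the same adaptation of \cite[Theorem 4.3]{ciardo2020braess} already invoked in Theorem \ref{almostallplanar}, but with $L$ substituted for $P_5$. The main anticipated obstacle is precisely this last substitution: Ciardo's counting argument must be reworked for a nine-vertex rooted branch rather than a five-vertex one. However, the argument is generic in the rooted subgraph being planted, so only the constants change while the asymptotic conclusion — that such a branch appears (even linearly many times) with probability tending to one — is preserved.
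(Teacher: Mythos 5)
Your proof is correct and follows the same architecture the paper intends: plant a fixed rooted branch containing two twin subgraphs in almost all labelled connected planar graphs (via the adaptation of Ciardo et al.\ used in Theorem \ref{almostallplanar}) and in almost all trees (via Schwenk, as in Theorem \ref{almostalltrees}), lift a sedentary vertex of $X_1$ to a sedentary pair state, and flip the signs on $V(X_2)$ so that exactly the one edge joining $X_2$ to the root becomes negative, producing the sedentary plus state. The only substantive difference is the sedentariness input: the paper's one-line proof points back to the gadget $X_1=X_2=P_n'$ and Corollary \ref{3c5}, relying on Monterde's result that $u$ and $v$ are sedentary in $P_n'$, whereas you use two copies of $K_{1,3}$ and invoke Theorem \ref{twins} (a twin set of size three), which has the small advantages of giving an explicit constant $C=\tfrac13$ and staying within results already stated in the paper. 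Since $P_3'$ is exactly $K_{1,3}$, the two gadgets essentially coincide (you attach at the centre rather than at the remaining leaf, which is immaterial), and your caveat that the Ciardo counting argument must be rerun for a nine-vertex rooted branch is at the same level of detail as the paper's own treatment. I see no gap.
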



\section{Cayley Graphs}\label{sec:cay}

Let $(\Gamma,+)$ be a finite abelian group and consider a connection set $S\subset\Gamma$, without containing the identity, satisfying  $\cb{-s : s \in S} = S.$ A Cayley graph over $\Gamma$ with the connection set $S$ is denoted by $\text{Cay}~(\Gamma, S)$ has the vertex set $\Gamma$ where two vertices $a, b \in \Gamma$ are adjacent if and
only if $a - b \in S.$ The cycle $C_n$ on $n$ vertices  is a Cayley graph over $\mathbb{Z}_n$ with connection set $\cb{-1,1}.$ There is no plus state transfer in the cycle $C_6$ \cite[Theorem 6.5]{kim}. However, assigning $-1$ to the edges $\{3,4\}$ and $\{0,5\}$ in $C_6$ results perfect state transfer between $\frac{1}{\sqrt2}(\e_1+\e_5)$ and $\frac{1}{\sqrt2}(\e_2+\e_4).$ Using the characterization of pretty good pair state transfer in cycles \cite[Theorem 9]{pal10}, we obtain an observation on a signed cycle.

\begin{thm}
The cycle $C_n$ with $w\ob{0,n-1}=w\ob{\frac{n}{2},\frac{n}{2}+1}=-1$ exhibits pretty good state transfer between $\frac{1}{\sqrt2}(\e_a+\e_{n-a})$ and $\frac{1}{\sqrt2}(\e_{\frac{n}{2}-a}+\e_{\frac{n}{2}+a}),$ whenever $0<a<\frac{n}{2}$ with $a\neq\frac{n}{4}$ and either:
    \begin{enumerate}
    \item $n = 2^{k},$ where $k$ is a positive integer greater than two or,
    \item $n=2^k p,$ where $k$ is a positive integer and $p$ is an odd prime, and $a=2^{k-2}r,$ for some positive integer $r.$
    \end{enumerate}   
\end{thm}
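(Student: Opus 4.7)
The plan is to reduce the claim to an application of the characterization of pretty good pair state transfer in unsigned cycles given by \cite[Theorem 9]{pal10}, via the sign-switching correspondence of Proposition \ref{prop} and Corollary \ref{bal}. Since $\exp(itA)$ depends continuously on the Hamiltonian, the identity $U_G(t)=DU_{\tilde{G}}(t)D$ passes to limits, yielding the PGST analog of Corollary \ref{bal}: pretty good state transfer between $\u$ and $\v$ in $G$ is equivalent to pretty good state transfer between $D\u$ and $D\v$ in $\tilde{G}$.

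First, I would construct the sign-switching matrix $D$ explicitly. Let $D=\mathrm{diag}(d_0,\ldots,d_{n-1})$ with $d_j=1$ for $0\leq j\leq n/2$ and $d_j=-1$ for $n/2<j\leq n-1$. Then $D^2=I$, and a direct entry-wise check shows that conjugation by $D$ flips the sign of exactly those edges whose endpoints have opposite $D$-values, which in $C_n$ are precisely $\{n/2,n/2+1\}$ and $\{n-1,0\}$. Hence $DA(C_n)D=A(\tilde{C}_n)$, where $\tilde{C}_n$ is the signed cycle described in the statement, and $\tilde{C}_n$ is balanced relative to $C_n$.

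Next, I would identify the pair states in the unsigned $C_n$ that map under $D$ to the target plus states in $\tilde{C}_n$. For $0<a<n/2$ we have $a\leq n/2<n-a$ and $n/2-a\leq n/2<n/2+a$, so $D$ acts with opposite signs on these indices. Consequently,
\[
D\cdot\tfrac{1}{\sqrt{2}}(\e_a-\e_{n-a})=\tfrac{1}{\sqrt{2}}(\e_a+\e_{n-a}),\qquad D\cdot\tfrac{1}{\sqrt{2}}(\e_{n/2-a}-\e_{n/2+a})=\tfrac{1}{\sqrt{2}}(\e_{n/2-a}+\e_{n/2+a}),
\]
so PGST in $\tilde{C}_n$ between the two plus states is equivalent to PGST in $C_n$ between the two pair states $\tfrac{1}{\sqrt{2}}(\e_a-\e_{n-a})$ and $\tfrac{1}{\sqrt{2}}(\e_{n/2-a}-\e_{n/2+a})$. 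The hypothesis $a\neq n/4$ enters here to ensure that these two pair states are genuinely distinct: if $a=n/4$ then $n-a=n/2+a$ and $n/2-a=a$, collapsing the transfer question to one of periodicity.

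Finally, I would invoke the characterization of pretty good pair state transfer in cycles from \cite[Theorem 9]{pal10}, which states that PGST between $\tfrac{1}{\sqrt{2}}(\e_a-\e_{n-a})$ and $\tfrac{1}{\sqrt{2}}(\e_{n/2-a}-\e_{n/2+a})$ in $C_n$ holds precisely under the two arithmetic conditions stated. Pulling this back by $D$ yields the desired pretty good plus state transfer in $\tilde{C}_n$. The main obstacle is simply matching the indexing convention of \cite[Theorem 9]{pal10} against the endpoints appearing in our pair states; once that bookkeeping is confirmed, the rest is a direct combination of the sign-switching trick with the cited result.
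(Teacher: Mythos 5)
Your proposal is correct and follows essentially the same route the paper intends: the theorem is presented as a direct consequence of the sign-switching correspondence (Proposition \ref{prop}/Corollary \ref{bal}, which passes to the PGST limit since $U_{C_n}(t)=DU_{\tilde{C}_n}(t)D$ holds exactly) together with the characterization of pretty good pair state transfer in cycles from \cite[Theorem 9]{pal10}. Your explicit choice of $D$, the verification that exactly the edges $\{n/2,n/2+1\}$ and $\{n-1,0\}$ change sign, and the observation that $a\neq n/4$ rules out the degenerate case where the two states coincide all match the intended argument.
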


Another way to view a signed graph $\tilde{G}$ is as two edge-disjoint spanning subgraphs of the underlying unsigned graph $G$ that is $\tilde{G}=G^+\cup G^-,$ where the edges of $G^+$ and $G^-$ are signed with $1$ and $-1$, respectively. The adjacency matrix of $\tilde{G}$ is given by
\begin{center}
$A\ob{\tilde{G}}=A\ob{G^+} - A\ob{G^-}$.
\end{center}
Such a decomposition of the adjacency matrix of the signed graph into positive and negative parts gives rise to the following result. The proof of the following result is omitted as it is similar to that of
perfect state transfer between vertex states in \cite[Theorem 2]{brow}. 

\begin{thm}\label{3t10}
Let $H$ and $K$ be edge-disjoint spanning subgraphs of a graph $G$ such that $G=H\cup K.$ If $H$ has perfect state transfer between $\frac{1}{\sqrt2}(\e_a\pm\e_b)$ and $\frac{1}{\sqrt2}(\e_c\pm\e_d)$ at $\tau$ and $K$ is periodic at $\frac{1}{\sqrt2}(\e_a\pm\e_b)$ at $\tau$, then the signed graph $\tilde{G}=H^+\cup K^-$ has perfect state transfer between $\frac{1}{\sqrt2}(\e_a\pm\e_b)$ and $\frac{1}{\sqrt2}(\e_c\pm\e_d)$ at $\tau$ provided $A(H)$ and $A(K)$ commute. 
\end{thm}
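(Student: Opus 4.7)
The plan is to factor $U_{\tilde G}(\tau)$ as a product of the transition matrices of $H$ and $K$ at suitably chosen times, and then apply the PST and periodicity hypotheses to the state $\frac{1}{\sqrt 2}(\e_a\pm\e_b)$. Write $\u=\frac{1}{\sqrt 2}(\e_a\pm\e_b)$ and $\v=\frac{1}{\sqrt 2}(\e_c\pm\e_d)$; the aim is to show that $U_{\tilde G}(\tau)\u$ is a scalar multiple of $\v$.

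First, since $G=H\cup K$ is an edge-disjoint union and the edges of $K$ are assigned weight $-1$ in $\tilde G$, we have $A(\tilde G)=A(H^+)-A(K^-)=A(H)-A(K)$. Because $A(H)$ and $A(K)$ commute by hypothesis, the identity $\exp(M+N)=\exp(M)\exp(N)$ for commuting bounded operators gives
\[
U_{\tilde G}(\tau)=\exp\bigl(i\tau(A(H)-A(K))\bigr)=\exp(i\tau A(H))\,\exp(-i\tau A(K))=U_H(\tau)\,U_K(-\tau).
\]
Theorem \ref{infg} is what justifies the factorization in the infinite case: boundedness of $G$, hence of $H$ and $K$, ensures that $A(H)$ and $A(K)$ are bounded operators on $\ell^2(\mathbb{Z}^+)$ and that the Cauchy product in the power series expansion converges absolutely, so the usual binomial-coefficient rearrangement goes through verbatim.

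Next, the periodicity of $\u$ in $K$ at $\tau$ yields $U_K(\tau)\u=\lambda\u$ for some $\lambda\in\mathbb{C}$ of modulus one, and unitarity of $U_K(\tau)$ forces $U_K(-\tau)\u=\bar\lambda\,\u$. Combined with the PST hypothesis $U_H(\tau)\u=\gamma\v$, this gives
\[
U_{\tilde G}(\tau)\u=U_H(\tau)U_K(-\tau)\u=\bar\lambda\,U_H(\tau)\u=\bar\lambda\gamma\,\v,
\]
which is exactly PST from $\u$ to $\v$ in $\tilde G$ at $\tau$. The argument does not depend on the choice of signs inside the pair/plus states, so all four cases covered by the $\pm$ notation are handled uniformly. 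The only mildly delicate step is the commuting-exponential factorization; everything else is one application each of the PST and periodicity hypotheses, which parallels \cite[Theorem 2]{brow} and explains why the authors omit the detailed proof.
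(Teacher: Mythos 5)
Your proof is correct and is precisely the argument the paper has in mind: the paper omits the proof, citing its similarity to the vertex-state version in \cite[Theorem 2]{brow}, and that argument is exactly your factorization $U_{\tilde G}(\tau)=U_H(\tau)U_K(-\tau)$ via $A(\tilde G)=A(H)-A(K)$ and commutativity, followed by one application each of periodicity ($U_K(-\tau)\u=\bar\lambda\u$) and of PST in $H$. No gaps; the remark that all four sign choices are handled uniformly is also accurate.
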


We also need the following result.

\begin{prop}\label{3p2}\cite{pal6}
If $S_1$ and $S_2$ are symmetric subsets of an abelian group $\Gamma$ then adjacency matrices of the Cayley graphs $\text{Cay}\ob{\Gamma, S_1}$ and $\text{Cay}\ob{\Gamma , S_2}$ commute.
\end{prop}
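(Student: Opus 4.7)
The plan is to exploit the fact that the adjacency matrix of any Cayley graph over an abelian group $\Gamma$ can be expressed as a sum of commuting permutation matrices arising from the regular representation. For each $s\in\Gamma$, let $P_s$ be the $|\Gamma|\times|\Gamma|$ permutation matrix defined by $(P_s)_{a,b}=1$ if $a-b=s$ and $0$ otherwise; equivalently, $P_s$ represents translation by $s$ acting on $\mathbb{C}^{|\Gamma|}$. The key observation is that $P_sP_t=P_{s+t}$, and since $\Gamma$ is abelian we have $s+t=t+s$, whence $P_sP_t=P_tP_s$ for all $s,t\in\Gamma$.

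The next step is to write
\[
A(\text{Cay}(\Gamma,S_j))=\sum_{s\in S_j}P_s,\qquad j=1,2,
\]
which follows immediately from the definition of the Cayley graph: $a$ and $b$ are adjacent precisely when $a-b\in S_j$, so the $(a,b)$-entry of the right-hand side equals $1$ exactly on these pairs. The symmetry condition $S_j=-S_j$ ensures that each sum is a symmetric matrix, so it genuinely represents an undirected graph, but this is not actually needed for the commutation argument. Since both adjacency matrices are $\mathbb{C}$-linear combinations drawn from the pairwise commuting family $\{P_s:s\in\Gamma\}$, their product in either order expands into the same double sum of translation operators, and they commute.

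An equivalent viewpoint, which I would mention as a remark, is character-theoretic: the irreducible characters $\chi$ of $\Gamma$ provide a common eigenbasis for the regular representation, and $A(\text{Cay}(\Gamma,S))$ has eigenvalues $\sum_{s\in S}\chi(s)$ with eigenvectors independent of $S$. Simultaneous diagonalizability then yields commutativity for free. I expect no real obstacle in this proof; the only subtlety worth flagging is that abelianness is used in precisely one spot, namely the identity $P_sP_t=P_tP_s$, and this is exactly where the argument would fail for a nonabelian Cayley graph, where the $P_s$'s generate the (noncommutative) group algebra of $\Gamma$.
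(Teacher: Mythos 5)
Your proof is correct. The paper itself gives no argument for this proposition---it is cited from an external reference---but your decomposition $A(\mathrm{Cay}(\Gamma,S_j))=\sum_{s\in S_j}P_s$ into pairwise-commuting translation matrices, with $P_sP_t=P_{s+t}=P_{t+s}$ being the one place abelianness enters, is the standard and complete justification; the character-theoretic remark about simultaneous diagonalization is an accurate equivalent formulation.
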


We now construct signed Cayley graphs with pair state and plus state transfer. 

\begin{exa}
Let $\Gamma$ be an abelian group of order $4n.$ Consider $\text{Cay}\ob{\mathbb{Z}_6\times \Gamma, S_1}$ and $\text{Cay}\ob{\mathbb{Z}_6\times \Gamma, S_2}$, with $S_1=\cb{(\pm 1,0)}$and $S_2=\cb{(0,j)}$ for all $j\in \Gamma \setminus \cb{0}.$ The graph $\text{Cay}\ob{\mathbb{Z}_6\times \Gamma, S_1}$ is formed by $4n$ number of disjoint cycles $C_6,$ while $\text{Cay}\ob{\mathbb{Z}_6\times \Gamma, S_2}$ is the graph formed by six complete graphs of order $4n.$ Now, the graph $\text{Cay}\ob{\mathbb{Z}_6\times \Gamma, S_1\cup S_2}$ is a connected graph. Since the complete graph $K_{4n}$ is periodic at $\frac{\pi}{2}$ and $C_6$ has pair state transfer between $\frac{1}{\sqrt2}(\e_0-\e_2)$ and $\frac{1}{\sqrt2}(\e_3-\e_5)$ at the same time, then by Theorem \ref{3t10} and Proposition \ref{3p2}, the signed graph $\text{Cay}\ob{\mathbb{Z}_6\times \Gamma, S_1}\cup \text{Cay}\ob{\mathbb{Z}_6\times \Gamma, S_2}^-$ has pair state transfer at $\frac{\pi}{2}$ between $\frac{1}{\sqrt2}(\e_{(0,j)}-\e_{(2,j)})$ and $\frac{1}{\sqrt2}(\e_{(3,j)}-\e_{(5,j)})$ for $j\in\Gamma.$
\end{exa}

\begin{exa}
Let $C$ be a subset of $\mathbb{Z}_2^d$ such that the sum of the elements of $C$ is $0$. Then each vertex in $\text{Cay}\ob{\mathbb{Z}_2^d, C}$ is periodic at $\frac{\pi}{2}$ \cite{che}. Consider $\text{Cay}\ob{\mathbb{Z}_8\times \mathbb{Z}_2^d , S_1}$ and $\text{Cay}\ob{\mathbb{Z}_8\times \mathbb{Z}_2^d, S_2}$, with $S_1=\cb{(\pm 1,0)}$ and $S_2=\cb{(0,j)}$ for all $j\in C.$ The graph $\text{Cay}\ob{\mathbb{Z}_8\times \mathbb{Z}_2^d , S_1}$ is formed by $2^d$ number of disjoint cycles $C_8,$ while $\text{Cay}\ob{\mathbb{Z}_8\times \mathbb{Z}_2^d, S_2}$ is the graph formed by eight cube-like graphs of order $2^d.$ Now, the graph $\text{Cay}\ob{\mathbb{Z}_8\times \mathbb{Z}_2^d, S_1\cup S_2}$ is connected graph. Since the cube-like graph $\text{Cay}\ob{\mathbb{Z}_2^d, C}$ is periodic at $\frac{\pi}{2}$ and $C_8$ has plus state transfer between $\frac{1}{\sqrt2}(\e_0+\e_4)$ and $\frac{1}{\sqrt2}(\e_2+\e_6)$ at the same time, Theorem \ref{3t10} and Proposition \ref{3p2} implies that the signed graph $\text{Cay}\ob{\mathbb{Z}_8\times \mathbb{Z}_2^d, S_1}\cup \text{Cay}\ob{\mathbb{Z}_8\times \mathbb{Z}_2^d, S_2}^-$ has plus state transfer at $\frac{\pi}{2}$ between $\frac{1}{\sqrt2}(\e_{(0,j)}+\e_{(4,j)})$ and $\frac{1}{\sqrt2}(\e_{(2,j)}+\e_{(6,j)})$ for $j\in\mathbb{Z}_2^d.$
\end{exa}

\section{Graphs with tails}
\label{gwt}

Let $G$ and $H$ be graphs. The graph obtained by identifying a vertex of $G$ with a vertex of $H$, say $V(G)\cap V(H)=\{u\}$, is called the \textit{1-sum} of $G$ and $H$ at vertex $u$. We say that $G$ is a \textit{graph with a tail} if $G$ is a 1-sum of a graph and a path, where the 1-sum is taken at the pendant vertex of the path. Let $\mathcal{Y}=\{H_j(r_j):j\in\{1,\ldots,|V(G)|\}\}$ be a collection of rooted graphs, where each $H_j$ is rooted at vertex $r_j$. The \textit{rooted product} of $G$ with $\mathcal{Y}$, denoted $G^\mathcal{Y}$, is obtained from the 1-sum of $G$ with $H_j$ by identifying vertex $j$ of $G$ with vertex $r_j$ in $H_j$, for each $j$.

In this section, we apply our results to graphs with tails. Throughout, we let $P_{\infty}$ denote the infinite path. The following result is a strengthening of \cite[Theorem 15]{bernard2025quantum}.

\begin{cor}
\label{aak}
Let $X_1$ be a bounded graph and $G=P_3^\mathcal{Y}$ where $\mathcal{Y}=\{X_1,P_{\infty},X_2\}$ and $f:X_1\rightarrow X_2$ is an isomorphism. Then $X_1$ has perfect state transfer between vertices $u$ and $v$ if and only if $G$ has perfect state transfer between $\frac{1}{\sqrt{2}}(\e_u-\e_{f(u)})$ and $\frac{1}{\sqrt{2}}(\e_v-\e_{f(v)})$.
\end{cor}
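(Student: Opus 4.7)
The plan is to reduce the claim to a direct application of Corollary \ref{mainres}(1 $\Leftrightarrow$ 2) by identifying a suitable pair of twin subgraphs in $G$ and an appropriate equitable partition.

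First, I would describe the structure of $G$ explicitly. Let the vertices of $P_3$ be $1,2,3$; by construction vertex $1$ is identified with the root $r_1$ of $X_1$, vertex $3$ with the root $r_2 = f(r_1)$ of $X_2$, and vertex $2$ with the pendant vertex $p$ of $P_\infty$. Thus $G$ is formed by attaching $X_1$ and $X_2$ to $p$ via single edges, with the remainder of the infinite path hanging off $p$. In particular, $X_1 \cup X_2$ is an induced subgraph of $G$ with no edges between $X_1$ and $X_2$, and every vertex of $V(G)\setminus V(X_1 \cup X_2)$ lies on $P_\infty$ and is adjacent only to vertices on $P_\infty$ (or, in the case of $p$, to both $r_1$ and $r_2 = f(r_1)$). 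Consequently $X_1$ and $X_2$ are twin subgraphs of $G$ in the sense of Section \ref{2s_3}, with $\hat{f}$ extending $f$ by the identity on $V(P_\infty)$, and the associated perturbation satisfies $A' = 0$ by Remark \ref{A'}.

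Next, I would define the partition $\Pi$ of $V(G)$ whose cells are $\{a, \hat{f}(a)\}$ for each $a \in V(X_1)$ together with singletons $\{v\}$ for each $v \in V(P_\infty)$. To verify that $\Pi$ is equitable, I check three cases: two cells $\{a, f(a)\}$ and $\{b, f(b)\}$ inside $X_1 \cup X_2$ (the number of neighbours matches on both sides by the isomorphism $f$); a cell $\{a, f(a)\}$ with a singleton $\{v\} \subset V(P_\infty)$ (only $v = p$ produces any edges, and in that case both $a = r_1$ and $f(a) = r_2$ contribute a single edge); and pairs of singletons in $V(P_\infty)$ (trivially equitable). Since $X_1$ is bounded and $P_\infty$ has maximum degree $2$, $G$ is bounded, so Theorem \ref{mainthm} applies.

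Finally, with $A' = 0$ we have $A(X_1) - A' = A(X_1)$, so PST in $X_1$ between $u$ and $v$ (in the usual adjacency-matrix sense) is exactly PST relative to $A(X_1) - A'$. Applying the equivalence $1 \Leftrightarrow 2$ of Corollary \ref{mainres} with $\mathcal{B}\check{\e}_u = \tfrac{1}{\sqrt{2}}(\e_u - \e_{\hat{f}(u)}) = \tfrac{1}{\sqrt{2}}(\e_u - \e_{f(u)})$, and similarly for $v$, yields the desired equivalence. The main obstacle is essentially bookkeeping: carefully identifying the attachment structure of $G$ so that $X_1$ and $X_2$ are genuinely twin subgraphs (not merely edge-perturbed twins), since only then does $A'$ vanish and one obtains a clean statement about PST in $X_1$ itself rather than in a perturbed version of it.
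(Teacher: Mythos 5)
Your proposal is correct and follows essentially the same route as the paper: identify $X_1$ and $X_2$ as twin subgraphs of $G$ (both roots attached to the pendant vertex of $P_\infty$), observe that the absence of edges between them forces $A'=0$, and apply the equivalence of parts 1 and 2 of Corollary \ref{mainres}. The extra bookkeeping you include on the equitable partition and boundedness of $G$ is left implicit in the paper but is consistent with its argument.
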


\begin{proof}
Note that $X_1$ and $X_2$ are twin subgraphs in $P_3^\mathcal{Y}$. As there are no edges between $X_1$ and $X_2$, we get $A'=0$.  Applying Corollary \ref{mainres}(1-2) yields the desired result.
\end{proof}

We now construct graphs with tails with pair PST that do not arise from rooted products of $P_3$. Let $X$ be the Cartesian product of two copies of $P_3$. The \textit{fly-swatter graph} $G_n$ is obtained from a 1-sum of $X$ and $P_n$ at vertex $u$, where $u$ is a degree two vertex of $X$ and a leaf in $P_n$ (see Figure \ref{yay5}). The following generalizes \cite[Example 11]{bernard2025quantum}.

\begin{cor}
\label{fsg}
For $n\in\mathbb{Z}^+\cup{\infty}$, let $G_n$ be the fly-swatter graph. Then perfect state transfer occurs between $\frac{1}{\sqrt{2}}(\e_1-\e_7)$ and $\frac{1}{\sqrt{2}}(\e_3-\e_5)$ in $G_n$ for all $n\in\mathbb{Z}^+\cup{\infty}$ at $\frac{\pi}{\sqrt{2}}$. Moreover, if $\hat{G}_n$ and $G'_n$ are signed graphs with adjacency matrices $DA(G_n)D$ and $D'A(G_n)D'$ respectively, where $D$ and $D'$ are diagonal matrices of $\pm 1$'s such that $D_1=-D_7$, $D_3=-D_5$, $D'_1=-D'_7$, and $D'_3=D'_5$, then perfect state transfer occurs between $\frac{1}{\sqrt{2}}(\e_1+\e_7)$ and $\frac{1}{\sqrt{2}}(\e_3+\e_5)$ in $\hat{G}_n$ and between $\frac{1}{\sqrt{2}}(\e_1+\e_7)$ and $\frac{1}{\sqrt{2}}(\e_3-\e_5)$ in $G_n'$ for all $n\in\mathbb{Z}^+\cup{\infty}$ at $\frac{\pi}{\sqrt{2}}$.
\end{cor}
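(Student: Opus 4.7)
The plan is to realize $G_n$ as a bounded graph with $P_3$ as twin subgraphs admitting an equitable partition, and then invoke Corollary \ref{mainres} (together with Remark \ref{A'}) to transfer the well-known endpoint-to-endpoint PST in $P_3$ to the three desired state transfers in $G_n$, $\hat{G}_n$, and $G'_n$.

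First I would fix coordinates on the $3\times 3$ grid $X=P_3\square P_3$, labelling vertices by pairs $(i,j)$ with $i,j\in\{1,2,3\}$ and taking the tail-attachment vertex to be $u=(1,1)$. Reflection across the main diagonal $\{(1,1),(2,2),(3,3)\}$ extends by the identity on $P_n$ to an automorphism $\hat{f}$ of $G_n$, and it exhibits $X_1$ (induced on $\{(1,2),(1,3),(2,3)\}$, corresponding to the paper's labels $1,2,3$) and $X_2$ (induced on $\{(2,1),(3,1),(3,2)\}$, corresponding to labels $7,6,5$) as twin $P_3$ subgraphs. The key observation is that there are no edges in $G_n$ between $X_1$ and $X_2$, so in the notation of Theorem \ref{mainthm} we have $A'=0$, and Remark \ref{A'} gives $U'(t)=U_{P_3}(t)$.

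Next I would exhibit the equitable partition $\Pi$ whose nontrivial cells are the three twin pairs $\{1,7\}$, $\{2,6\}$, $\{3,5\}$ and whose remaining cells are singletons (one for each diagonal vertex $(1,1),(2,2),(3,3)$ and one for each vertex of the tail). A routine check verifies that each vertex in a cell sends the same number of edges to every other cell, and the cell containing each twin pair is exactly $\{a,\hat{f}(a)\}$, as required by the hypothesis of Corollary \ref{mainres}. Boundedness is immediate: for finite $n$ the graph is finite, and for $n=\infty$ the maximum degree is $4$ (attained at $(2,2)$), so Theorem \ref{infg} applies uniformly in $n$.

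With these two ingredients in place, the conclusion is a one-line invocation of the corollary. Since $X_1\cong P_3$ admits vertex PST between its endpoints (labels $1$ and $3$) at time $\tfrac{\pi}{\sqrt 2}$, Corollary \ref{mainres}(2) yields pair PST in $G_n$ between $\tfrac{1}{\sqrt 2}(\e_1-\e_7)$ and $\tfrac{1}{\sqrt 2}(\e_3-\e_5)$ at $\tfrac{\pi}{\sqrt 2}$; Corollary \ref{mainres}(3) applied to the prescribed $D$ yields plus PST in $\hat{G}_n$ between $\tfrac{1}{\sqrt 2}(\e_1+\e_7)$ and $\tfrac{1}{\sqrt 2}(\e_3+\e_5)$ at the same time; and Corollary \ref{mainres}(4) applied to $D'$ yields plus-to-pair PST in $G'_n$ between $\tfrac{1}{\sqrt 2}(\e_1+\e_7)$ and $\tfrac{1}{\sqrt 2}(\e_3-\e_5)$, again at $\tfrac{\pi}{\sqrt 2}$, for every $n\in\mathbb{Z}^+\cup\{\infty\}$.

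There is no real obstacle in this argument; the content sits entirely in the structural setup. The only thing that might trip one up is ensuring the partition $\Pi$ is equitable \emph{uniformly} for all $n$, including $n=\infty$, but this is transparent because extending the tail only introduces additional degree-$2$ singleton cells whose neighbourhoods lie inside other singleton cells, which cannot disturb equitability.
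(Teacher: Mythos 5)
Your proposal is correct and follows essentially the same route as the paper: both identify the two $P_3$ subpaths of the grid swapped by the diagonal reflection as twin subgraphs with $A'=0$, note that the remaining induced subgraph is $P_n$ together with two isolated vertices, and then push the endpoint-to-endpoint PST of $P_3$ at $\frac{\pi}{\sqrt 2}$ through Corollary \ref{mainres}(2)--(4). The only differences are cosmetic (your coordinate labelling of the grid versus the figure's numbering) plus your explicit verification of equitability and boundedness, which the paper leaves implicit.
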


\begin{proof}
Note that $G_n$ has twin subgraphs $X_1=X_2=P_3$, where the subgraph induced by $V(G)\backslash (V(X_1)\cup V(X_2))$ is a union of $P_n$ and two isolated vertices. As PST occurs between vertices $1$ and $3$ in $X_1$ and $A'=0$, Corollary \ref{mainres}(2) yields the desired result.
\end{proof}

\begin{figure}[h!]
\begin{center}
 \begin{tikzpicture}[scale=.5,auto=left]
                       \tikzstyle{every node}=[circle, thick, fill=white, scale=0.6]
                       \node[draw,minimum size=0.7cm, inner sep=0 pt] (1) at (-6.5, 1.2) {$1$};
                \node[draw,minimum size=0.7cm, inner sep=0 pt] (2) at (-3.5, 2.9) {$2$};
		        \node[draw,minimum size=0.7cm, inner sep=0 pt] (3) at (-0.5, 1.2) {$3$};
                \node[draw,minimum size=0.7cm, inner sep=0 pt] (4) at (2.4, -0.5) {$4$};	
                \node[draw,minimum size=0.7cm, inner sep=0 pt] (8) at (-9.4, -0.5) {$8$};	
                \node[draw,minimum size=0.7cm, inner sep=0 pt] (9) at (-3.5, -0.5) {$9$};
		        \node[draw,minimum size=0.7cm, inner sep=0 pt] (5) at (-0.5, -2.2) {$5$};
		     \node[draw,minimum size=0.7cm, inner sep=0 pt] (6) at (-3.5, -3.9) {$6$};  
	       \node[draw,minimum size=0.7cm, inner sep=0 pt] (7) at (-6.5, -2.2) {$7$};
                \node[draw,minimum size=0.5cm, inner sep=0 pt] (10) at (5.4, -0.5) {};	
                \node[draw,minimum size=0.5cm, inner sep=0 pt] (11) at (8.4, -0.5) {};	
                \node[draw,minimum size=0.5cm, inner sep=0 pt] (12) at (11.4, -0.5) {};	

                \node at (-8, 3) {$X_1$};
			\node at (-8,-4) {$X_2$};
                \draw [dotted] (-3.5,1.8) ellipse (4.3cm and 1.7cm);
                \draw [dotted] (-3.5,-2.8) ellipse (4.3cm and 1.7cm);
				\draw [thick, black!70] (9)--(1)--(2)--(3)--(4)--(5)--(6)--(7)--(9);
                \draw [thick, black!70] (4)--(10)--(11)--(12);
                \draw [thick, black!70] (1)--(8)--(7);
                \draw [thick, black!70] (3)--(9)--(5);
\end{tikzpicture}
\caption{The fly-swatter graph $G_4$}\label{yay5}
\end{center}
\end{figure}
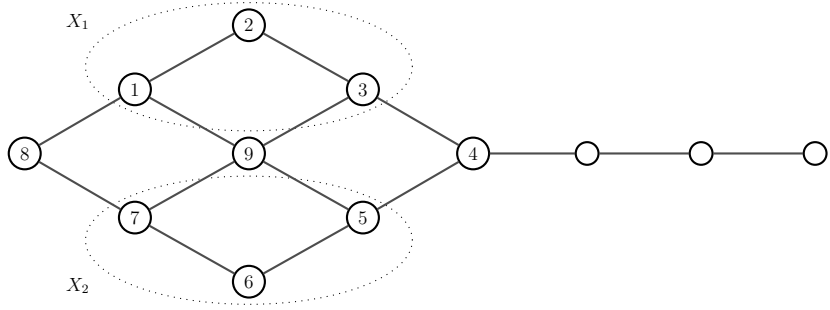


Let $V(C_n)=\mathbb{Z}_n$ and $E(C_n)=\{(j,j+1):j\in\mathbb{Z}_n\}$. For $p\geq 3$, let $H_{2p}$ be the resulting graph after adding the matching $M$ to $C_{2p}$, where $M=\varnothing$ if $p=3,4$, and for $p\geq 5$, 
\begin{equation*}
M=\begin{cases}
    \left\{\{\frac{p-3}{2},\frac{3p+1}{2}\},\{\frac{p-1}{2},\frac{3(p+1)}{2}\},\{\frac{p+1}{2},\frac{3(p-1)}{2}\},\{\frac{p+3}{2},\frac{3p-1}{2}\}\right\} & \text{if $p$ is odd}\\
    \left\{\{\frac{p}{2}-2,\frac{3p}{2}+1\},\{\frac{p}{2}-1,\frac{3p}{2}+2\},\{\frac{p}{2}+1,\frac{3p}{2}-2\},\{\frac{p}{2}+2,\frac{3p}{2}-1\}\right\} & \text{otherwise.}
  \end{cases}
\end{equation*}

\begin{cor}
Let $G_{p,n}$ be the graph obtained from a 1-sum of $H_{2p}$ and a path $P_{n}$ at vertex $p$. The following hold for all integers $p\geq 3$ and for all $n\in\mathbb{Z}^+\cup{\infty}$.
\begin{enumerate}
\item Perfect state transfer occurs between $\frac{1}{\sqrt{2}}(\e_{\lceil\frac{p}{2}-1\rceil}-\e_{\lfloor\frac{3p}{2}+1\rfloor})$ and $\frac{1}{\sqrt{2}}(\e_{\lfloor\frac{p}{2}+1\rfloor}-\e_{\lceil\frac{3p}{2}-1\rceil})$ in $G_{p,n}$ at $\tau=\frac{\pi}{2}$ whenever $p$ is odd and $\tau=\frac{\pi}{\sqrt{2}}$ otherwise.
\item Let $\hat{G}_{p,n}$ and $G'_{p,n}$ be signed graphs with $A(\hat{G}_{p,n})=DA(G_n)D$ and $A(G'_{p,n})=D'A(G_n)D'$, where $D,D'$ are diagonal matrices of $\pm 1$'s such that $D_{\lceil\frac{p}{2}-1\rceil}=-D_{\lfloor\frac{3p}{2}+1\rfloor}$, $D_{\lfloor\frac{p}{2}+1\rfloor}=-D_{\lceil\frac{3p}{2}-1\rceil}$,  $D'_{\lceil\frac{p}{2}-1\rceil}=-D'_{\lfloor\frac{3p}{2}+1\rfloor}$, and $D'_{\lfloor\frac{p}{2}+1\rfloor}=D'_{\lceil\frac{3p}{2}-1\rceil}$. Then perfect state transfer occurs between $\frac{1}{\sqrt{2}}(\e_{\lceil\frac{p}{2}-1\rceil}+\e_{\lfloor\frac{3p}{2}+1\rfloor})$ and $\frac{1}{\sqrt{2}}(\e_{\lfloor\frac{p}{2}+1\rfloor}+\e_{\lceil\frac{3p}{2}-1\rceil})$ in $\hat{G}_{p,n}$ and between $\frac{1}{\sqrt{2}}(\e_{\lceil\frac{p}{2}-1\rceil}+\e_{\lfloor\frac{3p}{2}+1\rfloor})$ and $\frac{1}{\sqrt{2}}(\e_{\lfloor\frac{p}{2}+1\rfloor}-\e_{\lceil\frac{3p}{2}-1\rceil})$ in $G_{p,n}'$ at $\tau$ given in 1.
\end{enumerate}
\end{cor}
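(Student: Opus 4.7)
The plan is to identify twin subgraphs inside $G_{p,n}$ whose internal dynamics is governed by a short path exhibiting vertex PST, and then invoke Corollary \ref{mainres}. For odd $p$, take $X_1$ to be the $P_2$ induced on $\{\lceil p/2-1\rceil,\lfloor p/2+1\rfloor\}$ and $X_2$ the $P_2$ induced on $\{\lfloor 3p/2+1\rfloor,\lceil 3p/2-1\rceil\}$; for even $p$, take $X_1$ to be the $P_3$ induced on $\{p/2-1,p/2,p/2+1\}$ and $X_2$ the $P_3$ induced on $\{3p/2+1,3p/2,3p/2-1\}$. In both cases define the isomorphism $f:X_1\to X_2$ as the obvious reflection, so that the endpoints of $X_1$ are precisely $\lceil p/2-1\rceil$ and $\lfloor p/2+1\rfloor$, with $\hat f$-images $\lfloor 3p/2+1\rfloor$ and $\lceil 3p/2-1\rceil$. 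Since $G_{p,n}$ has maximum degree at most $4$, it is bounded and Theorem \ref{mainthm} applies uniformly for all $n\in\Zl^+\cup\{\infty\}$.

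The key step is checking that $X_1,X_2$ are twin subgraphs of $G_{p,n}$ and that there are no edges between them, so that in the notation of Theorem \ref{mainthm} we have $A'=0$ and hence $U'(t)=U_{X_1}(t)$ by Remark \ref{A'}. The attached path $P_n$ meets $H_{2p}$ only at vertex $p\notin V(X_1)\cup V(X_2)$, so it plays no role in this check. For $p\in\{3,4\}$ with $M=\varnothing$, the reflection of $C_{2p}$ through the diameter joining vertices $0$ and $p$ shows that every vertex outside $V(X_1)\cup V(X_2)$ is adjacent to $a\in V(X_1)$ iff it is adjacent to $f(a)\in V(X_2)$ with the same weight. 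For $p\geq 5$, the matching $M$ is defined precisely so that for each endpoint $a$ of $X_1$, the cycle-neighbor of $a$ lying on the $X_2$-side of $C_{2p}$ coincides (via an $M$-edge from $f(a)$) with a neighbor of $f(a)$, and symmetrically. A short parity-split verification of the four matching edges confirms the twin property, and one also reads off directly from the index formulas that neither $C_{2p}$ nor $M$ places any edge between $V(X_1)$ and $V(X_2)$.

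To finish part 1, I recall that $P_2$ admits vertex PST between its two vertices at $\pi/2$ and $P_3$ admits vertex PST between its end vertices at $\pi/\sqrt{2}$. Applying the equivalence of statements 1 and 2 in Corollary \ref{mainres} (with $A'=0$ so that PST relative to $A(X_1)-A'$ is just PST in $X_1$) gives PST in $G_{p,n}$ between $\frac{1}{\sqrt{2}}(\e_{\lceil p/2-1\rceil}-\e_{\lfloor 3p/2+1\rfloor})$ and $\frac{1}{\sqrt{2}}(\e_{\lfloor p/2+1\rfloor}-\e_{\lceil 3p/2-1\rceil})$ at $\tau=\pi/2$ (odd $p$) or $\tau=\pi/\sqrt{2}$ (even $p$). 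Part 2 is then immediate from the equivalences 2$\Leftrightarrow$3 and 2$\Leftrightarrow$4 in Corollary \ref{mainres}: the matrix $D$ in the statement is exactly the one prescribed in Corollary \ref{mainres}(3), so the pair–pair PST in $G_{p,n}$ transfers to plus–plus PST in $\hat G_{p,n}$, and similarly $D'$ matches Corollary \ref{mainres}(4), yielding plus–pair PST in $G'_{p,n}$, both at the same $\tau$.

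The main obstacle is the arithmetic bookkeeping in verifying the twin property when $p\geq 5$: one must check that the four edges of $M$ pair up exactly with the four "wrong-side" cycle neighbors of the endpoints of $X_1$, in both parity cases. This is routine once the right choice of $X_1,X_2,f$ is made, and there is no deeper difficulty — everything else reduces to reading off the conclusion of Corollary \ref{mainres}.
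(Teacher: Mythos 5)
Your proposal is correct and follows essentially the same route as the paper: identify the twin subgraphs $X_1=X_2=P_2$ (odd $p$) or $P_3$ (even $p$) centred at $\lceil p/2\rceil$ and $\lfloor 3p/2\rfloor$, observe that the matching $M$ and the cycle edges make them genuine twins with no edges between them so that $A'=0$, and then apply Corollary~\ref{mainres}(2) for part~1 and Corollary~\ref{mainres}(3--4) for part~2 using vertex PST in $P_2$ at $\pi/2$ and in $P_3$ at $\pi/\sqrt{2}$. Your write-up is in fact slightly more careful than the paper's, since you spell out the verification of the twin property for the four matching edges and note the boundedness needed for the $n=\infty$ case.
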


\begin{figure}[h!]
\begin{multicols}{2}
\begin{center}
 \begin{tikzpicture}[scale=.5,auto=left]
                       \tikzstyle{every node}=[circle, thick, fill=white, scale=0.6]
                       \node[draw,minimum size=0.7cm, inner sep=0 pt] (1) at (-6.5, 1.3) {$1$};
		        \node[draw,minimum size=0.7cm, inner sep=0 pt] (3) at (-3, 1.3) {$2$};
                \node[draw,minimum size=0.7cm, inner sep=0 pt] (4) at (-1, -1) {$3$};	
                \node[draw,minimum size=0.7cm, inner sep=0 pt] (8) at (-8.5, -1) {$0$};	
		        \node[draw,minimum size=0.7cm, inner sep=0 pt] (5) at (-3, -3.3) {$4$}; 
	       \node[draw,minimum size=0.7cm, inner sep=0 pt] (7) at (-6.5, -3.3) {$5$};
                \node[draw,minimum size=0.5cm, inner sep=0 pt] (10) at (1, -1) {};	
                \node[draw,minimum size=0.5cm, inner sep=0 pt] (11) at (3, -1) {};		
                \node at (-8, 1.5) {$X_1$};
			\node at (-8,-3.5) {$X_2$};
                \draw [dotted] (-4.75,1.4) ellipse (2.6cm and 1.2cm);
                \draw [dotted] (-4.75,-3.4) ellipse (2.6cm and 1.2cm);
				\draw [thick, black!70] (1)--(3)--(4)--(5)--(7);
                \draw [thick, black!70] (4)--(10)--(11);
                \draw [thick, black!70] (1)--(8)--(7);
\end{tikzpicture}
\begin{tikzpicture}[scale=.5,auto=left]
                       \tikzstyle{every node}=[circle, thick, fill=white, scale=0.6]
                       \node[draw,minimum size=0.7cm, inner sep=0 pt] (1) at (-6.5, 2) {$1$};
                \node[draw,minimum size=0.7cm, inner sep=0 pt] (2) at (-3.5, 2.9) {$2$};
		        \node[draw,minimum size=0.7cm, inner sep=0 pt] (3) at (-0.5, 2) {$3$};
                \node[draw,minimum size=0.7cm, inner sep=0 pt] (4) at (1, -0.5) {$4$};	
                \node[draw,minimum size=0.7cm, inner sep=0 pt] (8) at (-8, -0.5) {$0$};	
		        \node[draw,minimum size=0.7cm, inner sep=0 pt] (5) at (-0.5, -3) {$5$};
		     \node[draw,minimum size=0.7cm, inner sep=0 pt] (6) at (-3.5, -3.9) {$6$};  
	       \node[draw,minimum size=0.7cm, inner sep=0 pt] (7) at (-6.5, -3) {$7$};
                \node[draw,minimum size=0.5cm, inner sep=0 pt] (10) at (3, -0.5) {};	
                \node[draw,minimum size=0.5cm, inner sep=0 pt] (11) at (5, -0.5) {};	
                
                \node at (-8, 3) {$X_1$};
			\node at (-8,-4) {$X_2$};
                \draw [dotted] (-3.5, 1.9) ellipse (3.7cm and 1.7cm);
                \draw [dotted] (-3.5, -2.9) ellipse (3.7cm and 1.7cm);
				\draw [thick, black!70] (1)--(2)--(3)--(4)--(5)--(6)--(7);
                \draw [thick, black!70] (4)--(10)--(11);
                \draw [thick, black!70] (1)--(8)--(7);
\end{tikzpicture}
\end{center}
\end{multicols}
\caption{The graph $G_{3,3}$ (left) with PST between $\frac{1}{\sqrt2}(\e_1-\e_5)$ and $\frac{1}{\sqrt2}(\e_2-\e_4)$ at $\frac{\pi}{2}$, and $G_{4,3}$ (right) with PST between $\frac{1}{\sqrt2}(\e_1-\e_7)$ and $\frac{1}{\sqrt2}(\e_3-\e_5)$ at $\frac{\pi}{\sqrt{2}}$}\label{yay6}
\end{figure}
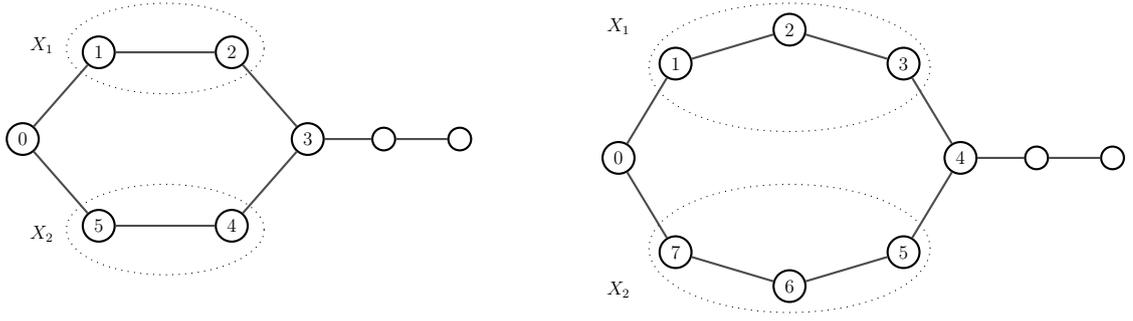

\begin{proof}
If $p$ is odd, then our assumption on $H_{2p}$ gives $G_{p,n}$ with twin subgraphs $X_1=X_2=K_2$ where $V(X_1)=\{\frac{p-1}{2},\frac{p+1}{2}\}$, $V(X_2)=\{\frac{3p+1}{2},\frac{3p-1}{2}\}$ and the subgraph induced by $V(G)\backslash (V(X_1)\cup V(X_2))$ is a union of $P_{p-2}$ and a one-sum of $P_{p-2}$ and $P_n$ at vertex $u$ which is the middle vertex of $P_{p-2}$ and a leaf of $P_n$. If $p$ is even, then $G_{p,n}$ has twin subgraphs $X_1=X_2=P_3$ where $V(X_1)=\{\frac{p}{2}-1,\frac{p}{2},\frac{p}{2}+1\}$, $V(X_2)=\{\frac{3p}{2}+1,\frac{3p}{2},\frac{3p}{2}-1\}$, and the subgraph induced by $V(G)\backslash (V(X_1)\cup V(X_2))$ is a union of $P_{p-3}$ and a one-sum of $P_{p-3}$ and $P_n$ at vertex $u$ which is the middle vertex of $P_{p-3}$ and a leaf of $P_n$. 
Since $A'=0$ in both cases, and $K_2$ and $P_3$ admit PST between end vertices at $\frac{\pi}{2}$ and $\frac{\pi}{\sqrt{2}}$, respectively, Corollary \ref{mainres}(2) yields 1, while Corollary \ref{mainres}(3-4) yields 2.
\end{proof}


\section{Open questions}
\label{oq}

In \cite[Theorem 5.6]{godd}, it is shown that no vertex state in a bounded infinite graph admits periodicity (which in turn implies that vertex PST does not occur in bounded infinite graphs). Thus, it is natural to wonder whether infinite graphs in general do not admit PST. In \cite{godd}, examples of locally finite graphs that are not bounded possessing periodic vertices are given, but it is unknown whether they are involved in perfect state transfer.

We also constructed signed graphs with exactly one negative edge weight having PST between a pair state and a plus state. It would be interesting to find a characterization of PST between a plus state and a pair state in undirected connected unweighted graphs.

Finally, recall that the number of edges in a planar graph is linear in its number of vertices. This relative sparsity makes it appealing to characterize vertex PST in planar graphs. 
In particular, are there infinite families of planar graphs with vertex PST?

\section*{Disclosure statement}
No potential conflict of interest was reported by the author(s).
 
\section*{Acknowledgements} 
C.\ Godsil is supported by NSERC grant number RGPIN-9439. S.\ Kirkland is supported by NSERC grant number RGPIN-2025-05547. S.\ Mohapatra is supported by the Department of Science and Technology (INSPIRE: IF210209). H.\ Monterde is supported by the University of Manitoba Faculty of Science and Faculty of Graduate Studies.


\bibliographystyle{abbrv}
\bibliography{references}

\end{document}